\documentclass[reqno,english]{amsart}
\usepackage{amsfonts,amsmath,latexsym,verbatim,amscd,mathrsfs,color,array}

\usepackage{amsmath,amssymb,amsthm,amsfonts,graphicx,color}
\usepackage{amssymb}
\usepackage{pdfsync}
\usepackage{epstopdf}
\usepackage[colorlinks=true]{hyperref}
\usepackage{subfigure}

\makeatletter
\def\@currentlabel{2.1}\label{e:dispaa}
\def\@currentlabel{2.21}\label{e:dispau}
\def\@currentlabel{2.22}\label{e:dispav}
\def\@currentlabel{2.23}\label{e:dispaw}
\def\@currentlabel{2.24}\label{e:dispax}
\def\theequation{\thesection.\@arabic\c@equation}
\makeatother
\let\oldbibliography\thebibliography
\renewcommand{\thebibliography}[1]{%
\oldbibliography{#1}%
\setlength{\itemsep}{0pt}%
}

\oddsidemargin 0.25in

\evensidemargin 0 truecm

\marginparsep 0pt

\topmargin -0.1in

\textheight 22.0 truecm

\textwidth 15 truecm

\renewcommand{\theequation}{\thesection.\arabic{equation}}
\newtheorem{lemma}{Lemma}[section]

\newtheorem{definition}{Definition}

\newtheorem{proposition}{Proposition}[section]
\newtheorem{corollary}{Corollary}[section]
\newtheorem{remark}{Remark}[section]
\newcommand{\bremark}{\begin{remark} \em}
\newcommand{\eremark}{\end{remark} }

\newcommand{\im}{\operatorname{Im}}

\newtheorem{theorem}{Theorem}[section]

\newcommand{\R}{{\mathbb R}}

\newcommand{\BE}{\begin{equation}}
\newcommand{\BEN}{\begin{equation*}}
\newcommand{\EE}{\end{equation}}
\newcommand{\EEN}{\end{equation*}}
\newcommand{\BL}{\begin{lemma}}
\newcommand{\EL}{\end{lemma}}
\newcommand{\BT}{\begin{theorem}}
\newcommand{\ET}{\end{theorem}}
\newcommand{\BP}{\begin{proposition}}
\newcommand{\EP}{\end{proposition}}
\newcommand{\BC}{\begin{corollary}}
\newcommand{\EC}{\end{corollary}}
\renewcommand{\Re}{\operatorname{Re}}
\renewcommand{\Im}{\operatorname{Im}}

\begin{document}


\title[Minimum principle]{\bf On universally optimal lattice phase transitions and energy minimizers of completely monotone potentials}

\author{Senping Luo}

\author{Juncheng Wei}

\author{Wenming Zou}

\address[S.~Luo]{School of Mathematics and statistics, Jiangxi Normal University, Nanchang, 330022, China}
\address[J.~Wei]{Department of Mathematics, University of British Columbia, Vancouver, B.C., Canada, V6T 1Z2}
\address[W.~Zou]{Department of Mathematical Sciences, Tsinghua University, Beijing 100084, China}

\email[S.~Luo]{luosp1989@163.com or spluo@jxnu.edu.cn}

\email[J.~Wei]{jcwei@math.ubc.ca}

\email[W.~Zou]{zou-wm@mail.tsinghua.edu.cn}

\begin{abstract}
We consider the minimizing problem for energy functionals with two types of competing particles and completely monotone potential on a lattice.  We prove that the minima of sum of two completely monotone functions among lattices is located exactly on a special curve which is part of the boundary of the fundamental region. We also establish  a  universal result for square lattice being the optimal in certain interval, which is surprising. Our result establishes  the hexagonal-rhombic-square-rectangular transition lattice shapes in many physical and biological system (such as Bose-Einstein condensates and two-component Ginzburg-Landau systems). It turns out, our results also apply to locating the minimizers of sum of two Eisenstein series, which is new in number theory.

\end{abstract}

\maketitle

\setcounter{equation}{0}

\section{Introduction and main results}

\subsection{Lattice energy functional}
We first introduce the energy functional considered in this paper.
Consider a set of $N$ classical identical particles in $\R^d$ ($d\leq 3$), interacting by pairs through a potential $\mathcal{V}$ depending on the distance between them. Let $x_1, x_2,\cdots, x_N\in \R^d$ and $p_1,p_2,\cdots,p_N\in\R^d$ be the positions and momenta of these particles.

The Hamiltonian dynamics of the model, based on the energy, is given by

\begin{equation}
\mathcal{H}_N(x_1,\cdots, x_N,p_1,\cdots, p_N)=\sum_{i=1}^N\frac{|p_i|^2}{2m}+\sum_{1\leq i<j\leq N} \mathcal{V}(|x_i-x_j|^2).
\end{equation}
Here $m$ is the mass of the particles and $|\cdot|$ is the Euclidean norm of $\R^d$.
When the temperature is zero, the equilibrium states are the minima of the functional $\mathcal{H}_N$, which satisfy $p_1=\cdots=p_N$. If one is interested in these equilibrium states, then it suffices to consider the potential energy
 \begin{equation}\label{VN}
\mathcal{E}(\{x_i\}_{i=1}^N):=\sum_{1\leq i\neq j\leq N}\mathcal{V}(|x_i-x_j|^2),
\end{equation}
where $\mathcal{V}$ is referred as the potential between two particles.

\medskip

Let $\Lambda$ be a two-dimensional lattice generated by the two basis $\alpha_1$ and $\alpha_2$. Assume that there are two types of competing  particles $A$ and $B$.
Now we consider the dynamics of two dimensional periodical lattices alternated by type A particle and type B particle (see figure \ref{f-assemblies}), where one is
located at the center of another. That is, we only consider a special type of $(\Omega_1, \Omega_2)$,
termed two species periodic assemblies of discs, denoted by
$(\Omega_{\alpha,1}, \Omega_{\alpha,2})$, with
\begin{align}
  \Omega_{\alpha, 1}& = \bigcup_{\lambda \in \Lambda}
   \Big \{ B(\xi, r_1) \cup B(\xi', r_1):
  \xi = \frac{3}{4} \alpha_1
     + \frac{1}{4}\alpha_2 + \lambda, \
     \xi' = \frac{1}{4} \alpha_1
     + \frac{3}{4} \alpha_2 + \lambda \Big \},
  \label{Omegaalpha1} \\
  \Omega_{\alpha, 2}& = \bigcup_{\lambda \in \Lambda}
  \Big \{ B(\xi, r_2) \cup B(\xi', r_2):
  \xi = \frac{1}{4} \alpha_1
     + \frac{1}{4} \alpha_2 + \lambda , \
     \xi' = \frac{3}{4}  \alpha_1
     + \frac{3}{4} \alpha_2 + \lambda
     \Big \}.   \label{Omegaalpha2}
\end{align}

   The A type particles are located on red spots, denoted by $P_{4k+1}, P_{4k+3}$; the B type particles are located on yellow spots, denoted by $P_{4k+2}, P_{4k+4}$, where $k\in N$. Let $\mathcal{V}$ be the potential of the system. Then the energy  \eqref{VN} of the system consists of two parts, the self-interaction part of each type particles and the interaction between the two different types of particles, while these two parts contribute to the total energy in distribution of  $\kappa$ and $1-\kappa$, where $\kappa\in(0,1)$.

\begin{figure}
\centering
 \includegraphics[scale=0.08]{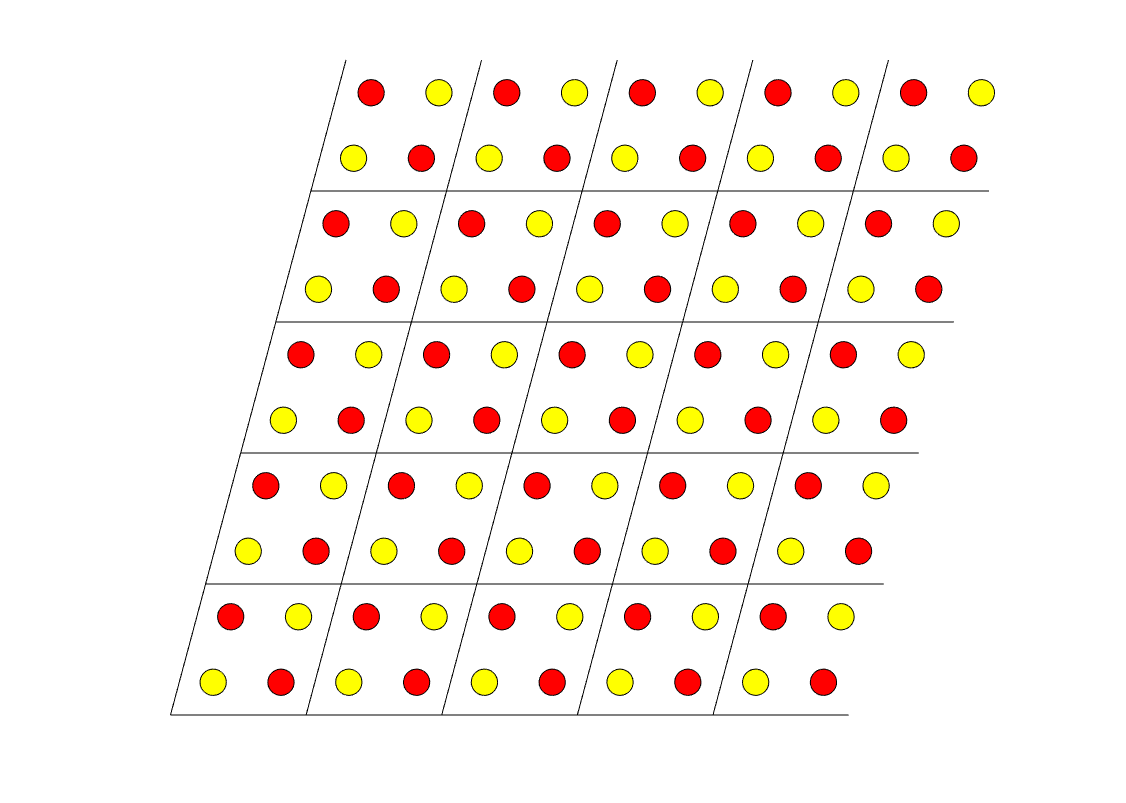}\includegraphics[scale=0.08]{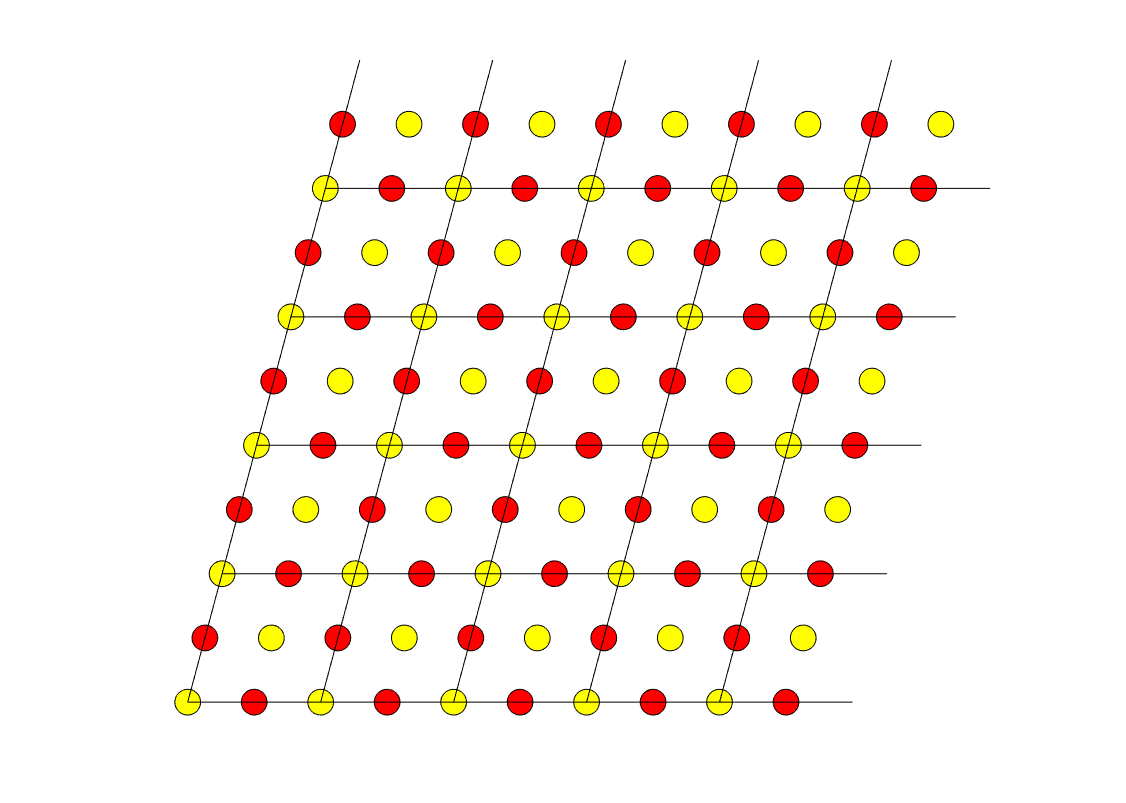}
 \caption{Two lattices with  centers at the lattice points and the half lattice
   points.}
\label{f-assemblies}
\end{figure}

Since the system is an infinite particles system, we first consider a mix of $N-$particles $(\{x_i\}_{i=1}^N)$ of type A and $N-$particles $(\{y_i\}_{i=1}^N)$ of type B to approximate the infinite particles system. For convenience, we set $N=M^2$. The energy of self interaction of the same type particles is
 \begin{equation}\nonumber\aligned
\mathcal{E}_{a}:=\sum_{(k,j)\in S_N\times S_N\setminus\{(0,0)\}}\frac{1}{2}\mathcal{V}(|k\alpha_1+j\alpha_2|^2){(M-|k|)(M-|j|)},
\endaligned\end{equation}
where the set $S_N$ is defined by $\{-(M-1),-(M-2),\cdots,0,\cdots,M-2, M-1\}$.
The limit per-energy of the self interaction of the same type particles is
  \begin{equation}\nonumber\aligned
&\mathcal{E}_{per,1}({\{x_j\}_{j=1}^\infty})=\mathcal{E}_{per,1}({\{y_j\}_{j=1}^\infty}):=\lim_{N\rightarrow\infty}\frac{\mathcal{E}_{a}}{N}\\
=&\lim_{M\rightarrow\infty}\sum_{(k,j)\in S_N\times S_N\setminus\{(0,0)\}}\mathcal{V}(|k\alpha_1+j\alpha_2|^2)\frac{(M-|k|)(M-|j|)}{2M^2}.
\endaligned\end{equation}
Such a limit converges if the summation $\sum_{(k,j)\in S_N\times S_N\setminus\{(0,0)\}}\mathcal{V}(|k\alpha_1+j\alpha_2|^2)$ is summable. One has
  \begin{equation}\nonumber\aligned
\mathcal{E}_{per,1}({\{x_j\}_{j=1}^\infty})=\mathcal{E}_{per,1}({\{y_j\}_{j=1}^\infty})=\frac{1}{2}\sum_{(k,j)\neq\{(0,0)\}}\mathcal{V}(|k\alpha_1+j\alpha_2|^2).
\endaligned\end{equation}
One can rewrite it in a concise form on the lattice $\Lambda$
 \begin{equation}\nonumber
\mathcal{E}_{per,1}({\{x_j\}_{j=1}^\infty})=\mathcal{E}_{per,1}({\{y_j\}_{j=1}^\infty})=\frac{1}{2}\sum_{\mathbb{P}\in\Lambda\backslash \{0\}}  \mathcal{V}(|\mathbb{P}|^2).
\end{equation}

      Next we consider the interaction between two different particles. The energy between $N-$particles$(\{x_i\}_{i=1}^N)$ of type A and $N-$particles$(\{y_i\}_{i=1}^N)$ of type B is
 \begin{equation}\nonumber\aligned
\mathcal{E}_{b}:=\sum_{(k,j)\in S_N\times S_N\setminus\{(0,0)\}}\mathcal{V}(|\frac{1}{2}(\alpha_1-\alpha_2)+k\alpha_1+j\alpha_2|^2){(M-|k|)(M-|j|)}.
\endaligned\end{equation}

The limit per-energy of the interaction between two different type particles $A,B$ is determined by
 \begin{equation}\nonumber\aligned
\mathcal{E}_{per,2}({\{x_j\}_{j=1}^\infty},{\{y_j\}_{j=1}^\infty} ):&=
\lim_{N\rightarrow\infty}\frac{\mathcal{E}_{b}}{N}\\
&=\sum_{(k,j)\in \mathbb{Z}\bigotimes \mathbb{Z}}\mathcal{V}(|\frac{1}{2}(\alpha_1-\alpha_2)+k\alpha_1+j\alpha_2|^2).
\endaligned\end{equation}
One can rewrite it as
 \begin{equation}\nonumber\aligned
\mathcal{E}_{per,2}({\{x_j\}_{j=1}^\infty},{\{y_j\}_{j=1}^\infty} )=\sum_{\mathbb{P}\in\Lambda}  \mathcal{V}(|\frac{\mathbb{P}+1}{2}|^2).
\endaligned\end{equation}
Therefore, the total energy of two dimensional periodical alternating system with contribution $\kappa, 1-\kappa$ of same-type and different-types interaction is determined by
 \begin{equation}\label{LatticeEnergy}\aligned
\mathcal{E}_{per}(A,B):&=\kappa(\mathcal{E}_{per,1}({\{x_j\}_{j=1}^\infty})+\mathcal{E}_{per,1}({\{y_j\}_{j=1}^\infty}))+(1-\kappa)\mathcal{E}_{per,2}({\{x_j\}_{j=1}^\infty},{\{y_j\}_{j=1}^\infty} )\\
&=\kappa\sum_{\mathbb{P}\in\Lambda\backslash \{0\}}  \mathcal{V}(|\mathbb{P}|^2)
+(1-\kappa)\sum_{\mathbb{P}\in\Lambda}  \mathcal{V}(|\frac{\mathbb{P}+1}{2}|^2),
\endaligned\end{equation}
where $\kappa\in [0,1]$ and we include the endpoint cases $\kappa=0,1$ for convenience.
To find out which shape of lattice $\Lambda$ minimizes the lattice energy \eqref{LatticeEnergy}, we consider the minimum of the  functional $ \mathcal{E}_{per} (A, B)$ over $\Lambda$, which is the main contribution of this paper.

\subsection{Sum of two completely monotone functions on the lattice and their minimums}

We now consider the potential in the lattice energy functional with special form, namely, the completely monotone potential cases.
A function $f:(0,\infty)\rightarrow\R$ is completely monotone if it is of class $C^\infty(0,\infty)$ and
$$
(-1)^j f^{(j)}(x)>0, j=0,1,2,\cdots \infty.
$$

The Bernstein functions are non-negative functions whose derivative is completely
monotone. They are such that
$$
f(x)>0, (-1)^{j-1} f^{(j)}(x)>0, j=1,2,\cdots \infty.
$$

The Bernstein functions have rich connection to other fields, firstly in probability theory.
Complete Bernstein functions are used in complex analysis under the name Pick or Nevanlinna
functions, while in matrix analysis and operator theory, the name operator monotone
function is more common. When studying the positivity of solutions of Volterra integral
equations, various types of kernels appear which are related to Bernstein functions. See the monograph \cite{Song2012}.

Let $\mathcal{F}(x)$ be completely monotone function and $\Lambda$ be a two dimensional lattice with fixed area. In (\ref{LatticeEnergy}) we choose the potential $ \mathcal{V}=\mathcal{F}$. Now we consider the lattice summation
 \begin{equation}\label{F0}
\mathcal{E}(\kappa,\Lambda)=\sum_{\mathbb{P}\in\Lambda\backslash \{0\}} \Big((1-\kappa)\mathcal{F}(|\frac{\mathbb{P}+1}{2}|^2)+\kappa \mathcal{F}(|\mathbb{P}|^2)\Big).
\end{equation}
The main aim of this paper is to study the optimal lattice shape which minimizes  $\mathcal{E}(\kappa,\Lambda)$, where $\kappa\in[0,1]$ is a parameter.

To write $\mathcal{E}(\kappa,\Lambda)$ in function form, we parametrize the lattice $\Lambda$ with cell  by $ \Lambda =\sqrt{\frac{\pi}{\Im(z)}}\Big({\mathbb Z}\oplus z{\mathbb Z}\Big)$ where the only parameter is $z$ which belongs to $ \mathbb{H}:=\{z= x+ i y\in\mathbb{C}: y>0\}$. It follows that
 \begin{equation}\nonumber\aligned
\mathcal{F}(|\mathbb{P}|^2):&=\sum_{(m,n)\in\mathbb{Z}^2\backslash \{0\}}\mathcal{F}(\frac{\pi }{\im(z) }|mz+n|^2),\\
\mathcal{F}(|\frac{\mathbb{P}+1}{2}|^2):&=\sum_{(m,n)\in\mathbb{Z}^2\backslash \{0\}}\mathcal{F}(\frac{\pi }{\im(w) }|mw+n|^2)\mid_{w=\frac{z+1}{2}}.
\endaligned\end{equation}

Therefore, we can write
 \begin{equation}\label{F1}
\mathcal{E}_\kappa(z):=\mathcal{E}(\kappa,\Lambda)=
\sum_{(m,n)\in\mathbb{Z}^2\backslash \{0\}}(1-\kappa)\mathcal{F}(\frac{\pi }{2\Im(z) }|m(z+1)+2n|^2)+\kappa\mathcal{F}(\frac{\pi }{\Im(z) }|mz+n|^2).
\end{equation}

We introduce the special curve in $\mathbb{H}$. Let the vertical interval be
\begin{equation}\aligned\nonumber
\Omega_{ea}:=\{z: \Re(z)=0, 1\leq \Im(z)\leq\sqrt3\},
\endaligned\end{equation}
while the arc on the circle is denoted by
\begin{equation}\aligned\nonumber
\Omega_{eb}:=\{z: |z|=1,0\leq \Re(z)\leq\frac{1}{2}\}.
\endaligned\end{equation}
The union of the vertical interval and the arc is denoted as
\begin{equation}\aligned\label{Curve1}
\Omega_e:=\Omega_{ea}\cup\Omega_{eb}.
\endaligned\end{equation}

In PDE theory, the maximum/minimum principle states that a function satisfying some partial differential equation in some domain achieves its maximum/minimum on the boundary of the domain.
Now we state a minimum principle for $\mathcal{E}_\kappa (z)$ on $ \mathbb{H}$:
\begin{theorem}[{\bf Minimum principle for sum of two completely monotone functions on the lattice}]\label{Th1} Let $\mathcal{F}(x)$ be any completely monotone function, $\kappa\in[0,1]$ be the parameter and  $\mathcal{E}_\kappa(z)$ be defined in \eqref{F1}. Then
\begin{equation}\nonumber
\min_{z\in \mathbb{H}}\mathcal{E}_\kappa(z)=\min_{z\in\Omega_e}\mathcal{E}_\kappa(z).
\end{equation}
\end{theorem}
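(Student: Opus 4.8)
The plan is to reduce everything to the classical fact that theta-like lattice sums, as functions on the fundamental domain of $\mathrm{SL}_2(\mathbb Z)$ (or of a relevant congruence subgroup), attain their extrema on the boundary, and to exploit the completely monotone structure via a Laplace-transform (Bernstein) representation. First I would invoke the Hausdorff--Bernstein--Widder theorem to write $\mathcal{F}(x)=\int_0^\infty e^{-xt}\,\mathrm{d}\mu(t)$ for a nonnegative measure $\mu$ on $(0,\infty)$. Substituting this into \eqref{F1} and exchanging sum and integral (justified by positivity/Tonelli once $\mathcal{F}$ is summable against the lattice, which is exactly the hypothesis under which $\mathcal{E}_\kappa$ is defined), one gets
\begin{equation}\nonumber
\mathcal{E}_\kappa(z)=\int_0^\infty\Big[(1-\kappa)\,\Theta_{1}(z,t)+\kappa\,\Theta_{2}(z,t)\Big]\,\mathrm{d}\mu(t),
\end{equation}
where $\Theta_{2}(z,t)=\sum_{(m,n)\neq 0}e^{-\frac{\pi t}{\Im z}|mz+n|^2}$ is the classical (shifted) theta function of the lattice $\sqrt{\pi/\Im z}(\mathbb Z\oplus z\mathbb Z)$ and $\Theta_{1}(z,t)$ is the analogous theta sum attached to the half-lattice shift $w=\frac{z+1}{2}$. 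So it suffices to prove the minimum principle for the single integrand $G_t(z):=(1-\kappa)\Theta_1(z,t)+\kappa\Theta_2(z,t)$ for each fixed $t>0$, since a pointwise-in-$t$ inequality $\min_{\mathbb H}G_t=\min_{\Omega_e}G_t$ integrates to the claim (using that $\Omega_e$ is the same set for all $t$).

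Next I would set up the symmetry/modularity bookkeeping. The map $z\mapsto w=\frac{z+1}{2}$ intertwines the two theta sums: $\Theta_2$ is invariant under the full modular group $\mathrm{SL}_2(\mathbb Z)$ acting on $z$, while $\Theta_1(z,t)=\Theta_2(w,t)$ with $w$ ranging over $z\mapsto\frac{z+1}{2}$ of $\mathbb H$; the relevant invariance group for the pair is the index-three congruence subgroup generated by $z\mapsto z+2$ and $z\mapsto -1/z$ (equivalently $\Gamma_0(2)$ or $\Gamma_\theta$ up to conjugation), whose fundamental domain can be chosen so that its boundary is precisely (the $\mathrm{SL}_2(\mathbb Z)$-orbit reduction of) the curve $\Omega_e$ together with the vertical lines $\Re z=0$ and $\Re z=1$. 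I would then show, using the reflection $z\mapsto -\bar z$ (which fixes each $\Theta_i$ and maps $\Omega_e$ to itself) and periodicity, that it is enough to minimize $G_t$ over the closure of the half-strip $\{0\le\Re z\le \tfrac12,\ |z|\ge 1\}$, and that on this region the infimum is not at $\Im z\to\infty$ (where $G_t\to 0$ from the $n\ne 0$ terms — here one must be slightly careful, since $\kappa$ could be $0$; in the degenerate cases $\kappa\in\{0,1\}$ the statement is a known one-theta-function result and I would cite it).

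The core analytic step — and the main obstacle — is to rule out interior minima, i.e. to show $G_t$ has no local minimum in the open region and hence is minimized on $\partial$. For a single theta function $\Theta_2(z,t)$ this is the classical computation (going back to the circle-packing/Epstein zeta literature, Rankin, Cassels, Montgomery, and in the modular-forms formulation used recently by Bétermin and others) that $\partial_x \Theta_2$ has a definite sign on $\{0<\Re z<\tfrac12\}$ via a Fourier expansion in $x=\Re z$ with sign-definite coefficients, so $\Theta_2$ is monotone in $x$ on horizontal slices and strictly so off the axis; combined with a subharmonicity or a direct $\partial_y$ analysis on the arc $|z|=1$ one pins the minimum to $\Omega_e$. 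The new difficulty is handling the \emph{sum} $(1-\kappa)\Theta_1+\kappa\Theta_2$: the shifted sum $\Theta_1$ is monotone in $x$ with the \emph{opposite} orientation on the same strip (because the shift $\frac{z+1}{2}$ reverses the role of the two boundary pieces), so the two $x$-derivatives compete and a naive monotonicity argument fails in the interior. I expect the resolution to be that both $\Theta_1$ and $\Theta_2$ are, on the relevant region, \emph{convex} along the foliation by the modular geodesics whose endpoints lie on $\Omega_e$ (or that $\log$ of each is), so any convex combination is convex along that foliation and therefore attains its minimum at an endpoint, which lies on $\Omega_e$; making this precise — identifying the correct one-parameter families and proving the requisite convexity/sign of a second derivative of a theta series uniformly in $t$ and $\kappa$ — is where the real work lies, and I would expect the paper to do it by an explicit $q$-expansion argument showing the pertinent Fourier coefficients (in $x$) are of one sign.
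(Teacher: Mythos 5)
Your first step coincides with the paper's: the Hausdorff--Bernstein--Widder representation reduces everything to the Gaussian integrands, i.e.\ to the family $\mathcal{W}_\kappa(z;\alpha)$ of \eqref{Wzalphadef}, and a pointwise-in-$\alpha$ statement integrates against $d\lambda$. Your symmetry bookkeeping, however, already contains an error: the reflection $z\mapsto 1-\bar z$ is a symmetry of the unshifted theta sum only, not of the shifted one, so the invariance group of the pair is $\mathcal{G}_2$ with fundamental domain $\{0<\Re z<1,\ |z|>1\}$, and there is no group-theoretic reduction to the half-strip $\{0\le\Re z\le\frac12,\ |z|\ge1\}$. The region $\frac12<\Re z<1$ must be disposed of by an argument; the paper does this with Theorem \ref{Coro2} (both $\partial_x\mathcal{E}_0>0$ and $\partial_x\mathcal{E}_1>0$ there, resting on Montgomery's monotonicity and its extension in \cite{LW}), not by symmetry.

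The essential gap is the core step on the left half $\mathcal{R}_L$, where, as you correctly observe, $\partial_x$ of the shifted and unshifted sums compete. You leave its resolution as a conjecture (convexity of each theta sum along a foliation by hyperbolic geodesics with endpoints on $\Omega_e$); this is neither proved nor available in the literature in the required uniformity in $\alpha$ and $\kappa$, and such a foliation cannot even cover the points with $\Re z$ near $1$ since $\Omega_e$ lies in $\{0\le\Re z\le\frac12\}$. It is also not the paper's mechanism. The paper first uses the duality of Lemma \ref{l-dual}, $\mathcal{W}_\kappa(z)=\mathcal{W}_{1-\kappa}\big(\frac{z-1}{z+1}\big)$ (a map that exchanges $\Omega_{ea}$ and $\Omega_{eb}$), to restrict to $\kappa\in[0,\frac12]$; it then writes $\mathcal{W}_\kappa$ as a nonnegative combination of $\mathcal{W}_{1/2}$ and $\mathcal{W}_0$ as in \eqref{W12}, so that the only new ingredient needed is the monotonicity $\partial_x\mathcal{W}_{1/2}\ge0$ on $\overline{\mathcal{R}_L}$ (Theorem \ref{LemmaT1}), combined with $\partial_x\mathcal{W}_0\ge0$ there from \cite{LW}. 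That inequality is proved by expanding in one-dimensional theta functions as in \eqref{PPP3} and beating the negative contributions quantitatively through the two-sided bounds $\underline\vartheta,\overline\vartheta$ of Lemma \ref{LemmaT3} (Lemmas \ref{HHHA} and \ref{HHHB}); in particular your expectation that the Fourier coefficients in $x$ are of one sign cannot be realized -- the coefficients genuinely have both signs, and the whole difficulty is the quantitative comparison. As it stands, your proposal correctly reduces Theorem \ref{Th1} to a statement about Gaussian theta combinations but does not prove that statement.
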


\begin{remark}\label{Re1} Theorem \ref{Th1} holds for any two nonnegative combination of two completely  monotone functions on the lattice. In fact, consider the nonnegative combination,
 \begin{equation}\nonumber
\mathcal{E}(a,b,\Lambda):=\sum_{\mathbb{P}\in\Lambda\backslash \{0\}} \Big(a\mathcal{F}(|\frac{\mathbb{P}+1}{2}|^2)+b \mathcal{F}(|\mathbb{P}|^2)\Big).
\end{equation}
After normalization, one can rewrite it as
 \begin{equation}\nonumber\aligned
\mathcal{E}(a,b,\Lambda)&=(a+b)\sum_{\mathbb{P}\in\Lambda\backslash \{0\}} \Big(\frac{a}{a+b}\mathcal{F}(|\frac{\mathbb{P}+1}{2}|^2)+\frac{b}{a+b} \mathcal{F}(|\mathbb{P}|^2)\Big)\\
&=(a+b)\mathcal{E}(\frac{b}{a+b},\Lambda),
\endaligned\end{equation}
then it becomes the standard form in Theorem \ref{Th1}. Similar remarks are applied  to the theorems \ref{Th2}, \ref{Th3} and \ref{Th4} below.

\end{remark}

In Theorem \ref{Th1}, we conclude that the minimizers of sum of two completely monotone functions are located precisely on a special curve. As we will see later, this special curve is a partial boundary of the fundamental region under the group defined in \eqref{GroupG2}.

Many of the physical meaningful potentials are completely monotone functions, including Riesz potentials $r^{-a}(a>0),$  screened Coulomb potential (also called Yukawa potential) $a\frac{e^{-br}}{r}(a,b>0)$, and  Born-Mayer potential $ae^{-br}(a,b>0)$.
There are also  a large number of completely monotone functions. Examples include $(a+\frac{b}{x^\alpha})^\mu,\;\hbox{here}\; a\geq0, b\geq0, \mu\geq0, 0\leq\alpha\leq1$; $e^{\frac{a}{x}} (a>0)$, $\frac{1}{(a-b e^{-x})^\mu}, \;\hbox{here}\;a\geq b>0, \mu>0$; $\frac{1}{x^a(\Gamma(1+\frac{1}{x}))^x}, \;\hbox{here}\;a\geq1$; $\frac{\Gamma(x)\Gamma(x+a+b)}{\Gamma(x+a)\Gamma(x+b)} (a, b\geq 0)$; $\frac{2}{\sqrt{\pi x}}\int_0^{\sqrt x}e^{-t^2}dt$; etc. There are rules to generate more completely monotone functions from known ones. For example, if $f(x)$ is completely monotone, then $f^{(2m)}(x), -f^{(2m+1)}(x), e^{ f(x)}$, $f(1-e^{-x})$, and $f(ax^\alpha+b)$($a\geq0, b\geq0, 0\leq\alpha\leq1$) are also completely monotone for $m=1,2,3,\cdots$. We refer to  \cite{Ismail1986,Miller2001,Alzer2002,Alzer2006,Askey1974} and the references therein for more concrete examples and the rules to generate completely monotone functions. For the references on the physical applications of  the completely monotone functions, we refer to B\'{e}termin \cite{Betermin2016} and the references therein.

To introduce next Theorem, we follow  the terminology  of "universally optimal" from Cohn-Kumar \cite{Cohn2006JAMS} as defined by
\begin{definition}[Cohn-Kumar \cite{Cohn2006JAMS}]\label{Def1} A finite subset $\mathcal{C}\subset S^{n-1}$
is universally optimal if it (weakly)
minimizes potential energy among all configurations of $|\mathcal{C}|$ points on $S^{n-1}$
for each completely monotonic potential function.
\end{definition}

In parallel, we shall state our definition of universally optimal among all lattice shapes.
\begin{definition}\label{Def2} A lattice shape(determined by a complex variable $z_0$)
is {\bf universally optimal} if it (weakly)
minimizes potential energy among all configurations of lattice shapes
for each completely monotonic potential function.
\end{definition}

The weak minimizer(s) here means that the minimizer is not necessarily unique.
Now we state our universally optimal result for the parameter locating on some interval with positive measure:
\begin{theorem}[{\bf Universally optimal and an explicitly universal interval for square lattice}]\label{Th2} For any completely monotone potential function $\mathcal{F}$, and for any $\kappa\in[\frac{1}{3},\frac{2}{3}]$,
\begin{equation}\nonumber
\min_{z\in \mathbb{H}}\mathcal{E}_\kappa(z)
\end{equation}
 is attained at $i$, corresponding to the square lattice by our setting and is unique up the group $\mathcal{G}_2$ defined in \eqref{GroupG2}.
\end{theorem}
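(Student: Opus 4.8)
We sketch the strategy; the details occupy the rest of this section.

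\emph{Reduction to the boundary curve and to Gaussians.} By Theorem~\ref{Th1} it suffices to prove that $\min_{z\in\Omega_e}\mathcal E_\kappa(z)=\mathcal E_\kappa(i)$, with $i$ the unique minimizer on $\Omega_e$. The Hausdorff--Bernstein--Widder theorem writes a completely monotone $\mathcal F$ as $\mathcal F(x)=\int_{(0,\infty)}e^{-tx}\,\mathrm d\mu(t)$ for a nonzero nonnegative measure $\mu$; interchanging sum and integral and using $\frac{\pi}{2\Im z}|m(z+1)+2n|^2=\frac{\pi}{\Im w}|mw+n|^2$ with $w=\frac{z+1}{2}$, one gets
\begin{equation}\nonumber
\mathcal E_\kappa(z)=\int_{(0,\infty)}\!\Big[(1-\kappa)\,\Theta\big(\tfrac{z+1}{2};t\big)+\kappa\,\Theta(z;t)-1\Big]\mathrm d\mu(t),\qquad \Theta(z;t):=\!\!\sum_{(m,n)\in\mathbb Z^2}\!\!e^{-\pi t|mz+n|^2/\Im(z)}.
\end{equation}
Hence it is enough to show that for each fixed $t>0$ and each $\kappa\in[\tfrac13,\tfrac23]$ the function $G_\kappa(z;t):=(1-\kappa)\Theta(\tfrac{z+1}{2};t)+\kappa\Theta(z;t)$ is minimized over $\Omega_e$ only at $z=i$; here $G_\kappa(i;t)=\phi(t)^2$ is \emph{independent of $\kappa$}, where $\phi(s):=\sum_{m}e^{-\pi s m^2}$, $\psi(s):=\sum_{m}e^{-\pi s(m+\frac12)^2}$, by the classical identity $\phi(t)^2=\phi(2t)^2+\psi(2t)^2$.

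\emph{Collapsing the two arcs of $\Omega_e$; the $\kappa\leftrightarrow1-\kappa$ duality.} Parametrize $\Omega_{ea}$ by $z=iy$, $1\le y\le\sqrt3$, and $\Omega_{eb}$ by $z=e^{i\theta}$, $\tfrac\pi3\le\theta\le\tfrac\pi2$, and set $\rho=\cot(\theta/2)\in[1,\sqrt3]$. Splitting the lattice sums by the parity of the indices — equivalently, using that adding the cell centre to a rhombic lattice yields a rectangular one, and conversely — one computes
\begin{equation}\nonumber
\Theta\big(\tfrac{e^{i\theta}+1}{2};t\big)=\phi(t\rho)\phi(t/\rho)=:A(\rho),\qquad \Theta(e^{i\theta};t)=\phi(2t\rho)\phi(2t/\rho)+\psi(2t\rho)\psi(2t/\rho)=:B(\rho),
\end{equation}
while on the segment $\Theta(iy;t)=A(y)$ and $\Theta(\tfrac{1+iy}{2};t)=B(y)$. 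Thus, with $H_\mu(\rho):=(1-\mu)A(\rho)+\mu B(\rho)$, we have $G_\kappa|_{\Omega_{eb}}=H_\kappa$ and $G_\kappa|_{\Omega_{ea}}=H_{1-\kappa}$, so $\min_{\Omega_e}G_\kappa(\cdot;t)=\min\big(\min_{[1,\sqrt3]}H_\kappa,\ \min_{[1,\sqrt3]}H_{1-\kappa}\big)$ — this interchange of the two boundary arcs under centering, together with $\kappa\mapsto1-\kappa$, is what produces the \emph{symmetric} interval $[\tfrac13,\tfrac23]$. Since $\{\kappa,1-\kappa\}\subset[\tfrac13,\tfrac23]$ for $\kappa\in[\tfrac13,\tfrac23]$, it remains to prove $H_\mu(\rho)\ge\phi(t)^2=H_\mu(1)$ for all $t>0$, $\rho\in[1,\sqrt3]$, $\mu\in[\tfrac13,\tfrac23]$, with equality only at $\rho=1$. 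On $[1,\sqrt3]$ one has $A(\rho)\ge\phi(t)^2\ge B(\rho)$ (because $\rho\mapsto\phi(s\rho)\phi(s/\rho)$ is nondecreasing and $\rho\mapsto\psi(s\rho)\psi(s/\rho)$ nonincreasing for $\rho\ge1$, which follows, after symmetrizing in $\rho\leftrightarrow1/\rho$, from the monotonicity of the logarithmic derivatives of $\phi$ and $\psi$), so $H_\mu(\rho)$ is affine and nonincreasing in $\mu$; hence the extremal case is $\mu=\tfrac23$, and everything reduces to the sharp inequality
\begin{equation}\label{eq:crux}
\phi(t\rho)\phi(t/\rho)+2\phi(2t\rho)\phi(2t/\rho)+2\psi(2t\rho)\psi(2t/\rho)\ \ge\ 3\,\phi(t)^2,\qquad 1\le\rho\le\sqrt3,
\end{equation}
with equality only at $\rho=1$.

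\emph{Proof of \eqref{eq:crux}, and the main obstacle.} At $\rho=1$ equality holds by $\phi(t)^2=\phi(2t)^2+\psi(2t)^2$, and $\rho=1$ is a critical point of the left-hand side by the $\rho\leftrightarrow1/\rho$ symmetry. The plan is to show it is the global minimum on $[1,\sqrt3]$ (indeed on $(0,\infty)$) — for instance by proving the $\rho$-derivative is nonnegative for $\rho\ge1$: after differentiation it becomes a combination, with positive theta-product weights, of the monotone telescoping differences $\ell_\phi(t\rho)-\ell_\phi(t/\rho)$, $\ell_\phi(2t\rho)-\ell_\phi(2t/\rho)$ and $\ell_\psi(2t\rho)-\ell_\psi(2t/\rho)$, where $\ell_\phi(x)=x\phi'(x)/\phi(x)$ increases from $-\tfrac12$ at $0^+$ to $0$ at $\infty$ and $\ell_\psi(x)=x\psi'(x)/\psi(x)$ is decreasing; the required sign is then extracted from quantitative comparisons of $\phi,\psi$ at reciprocal arguments via Jacobi's imaginary transformation. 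Strictness for $\rho\in(1,\sqrt3]$ — needed for uniqueness mod $\mathcal G_2$ and for the endpoints $\kappa\in\{\tfrac13,\tfrac23\}$ — amounts to $\frac{\phi(t)^2-B(\rho)}{A(\rho)-\phi(t)^2}<\tfrac12$ for each fixed $t$. The delicate point is the \emph{uniformity in $t$}: the second-order Taylor coefficient of the left-hand side of \eqref{eq:crux} at $\rho=1$ equals $3t\phi\phi'+t^2\phi\phi''+3t^2(\phi')^2$ (with $\phi=\phi(t)$), and although this is positive for every $t>0$ it tends to $0$ as $t\to0^+$ and as $t\to\infty$; this degeneration is exactly what forces the coefficient $2$ in \eqref{eq:crux}, hence the endpoints $\tfrac13$ and $\tfrac23$, to be sharp. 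Controlling this sign uniformly in $t$ — after rescaling by $t$ to exploit the $t\leftrightarrow1/t$ symmetry and again invoking the Jacobi transformation — is the heart of the argument.
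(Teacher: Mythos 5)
Your reduction is essentially the paper's: Bernstein's theorem reduces to the Gaussian family, Theorem \ref{Th1} reduces to $\Omega_e$, the duality $z\mapsto\frac{z-1}{z+1}$ (Lemma \ref{l-dual}, Lemma \ref{l-circle-im}) swaps the arc and the segment while swapping $\kappa\leftrightarrow 1-\kappa$, and everything boils down to showing that $A(\rho)+\sigma B(\rho)$, with $A(\rho)=\phi(t\rho)\phi(t/\rho)$, $B(\rho)=\phi(2t\rho)\phi(2t/\rho)+\psi(2t\rho)\psi(2t/\rho)$ and $\sigma\le 2$, is minimized at $\rho=1$ uniformly in $t>0$ (this is exactly Theorem \ref{4th1} with $\sigma\in[0,2]$, which yields $\kappa=\frac{1}{1+\sigma}\in[\frac13,1]$ on the segment and $[0,\frac23]$ on the arc). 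But you do not prove this crux inequality $A(\rho)+2B(\rho)\ge 3\phi(t)^2$ on $[1,\sqrt3]$: you state a \emph{plan} (differentiate in $\rho$, use monotonicity of the logarithmic derivatives $\ell_\phi,\ell_\psi$ and Jacobi's transformation) and you yourself identify the uniform-in-$t$ control of the sign as ``the heart of the argument'' without carrying it out. That step is precisely where all the work of the paper lies: the authors prove it by showing $\mathcal{L}\big(A+\sigma B\big)\ge 0$ for $y\ge 1$ with the operator $\mathcal{L}=\partial_y^2+\frac{2}{y}\partial_y$ of \eqref{Ldef} (so that the vanishing of the $y$-derivative at $y=1$, forced by the $y\mapsto 1/y$ symmetry, upgrades to monotonicity), combining Montgomery's Lemma \ref{ML} for $y\ge\sqrt3$ with the technical Lemma \ref{Lemma43} on $[1,\sqrt3]$, which itself requires explicit truncation-error bounds, an AM--GM step, and monotonicity of an auxiliary function $\mathcal{B}(\alpha,y;\sigma)$ reduced to a single evaluation at $(\alpha,y;\sigma)=(1,1;2)$. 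Since your note only gestures at an alternative route for this estimate, the proposal has a genuine gap at its central point.

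Two further inaccuracies: (i) your justification of $B(\rho)\le\phi(t)^2$ (needed to conclude that $H_\mu$ is nonincreasing in $\mu$ and hence that $\mu=\frac23$ is extremal) does not follow from the monotonicity of $\rho\mapsto\psi(2t\rho)\psi(2t/\rho)$ alone, because $B$ also contains the \emph{increasing} term $\phi(2t\rho)\phi(2t/\rho)$; the correct statement is that the rhombic theta function decreases along the arc from the square to the hexagonal point, which in the paper is extracted from Lemma \ref{ML} together with the circle--segment transfer Lemma \ref{l-circle-im}, and the same monotonicity on the arc is also what gives uniqueness of the minimizer modulo $\mathcal{G}_2$, which you only reduce to a further unproven strict inequality. (ii) Your closing claim that the degeneration of the second-order coefficient as $t\to0^+$ or $t\to\infty$ forces the constant $2$ (hence the endpoints $\frac13,\frac23$) to be sharp contradicts the paper, which states explicitly that $\sigma=2$ and the interval $[\frac13,\frac23]$ are \emph{not} sharp and can be improved by refining the same estimates.
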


\begin{remark}
The explicitly universal interval $[\frac{1}{3},\frac{2}{3}]$ is not sharp and can be improved to be optimal by our method through refining the estimates. See Section \ref{Theorem2} below.

\end{remark}
\begin{remark}
The universality of optimal  square lattice has also been observed in a recent paper by  Betermin-Knupfer-Faulhuber \cite{Betermin2021} for  another competing system  with  Gaussian and Riesz interactions.
\end{remark}

Note that there are three factors  in the minimization process of $\min_{z\in \mathbb{H}}\mathcal{E}_\kappa(z)$: one is the function (there are a large class of completely monotone functions), one is parameter $\kappa$ which represents the competing strength of the two-component system, and the lattice parameter $z$.
Theorem \ref{Th2} asserts that the minimizer of sum of two completely monotone functions is always located on $i$ (corresponding to the square lattice) for the parameter $\kappa$ in $[\frac{1}{3},\frac{2}{3}]$. This seems rather  surprising.

\vskip0.2in Recall that the minimizer of the lattice energy functional corresponds to the optimal lattice shape in the competing system.
Therefore, combining Theorem \ref{Th1} and Theorem \ref{Th2}, we have the following phases transition result which has  appeared in many competing systems(like Bose-Einstein system, see in Mueller-Ho \cite{Mue2002} and Luo-Wei \cite{LW}). Nevertheless this result applies to large class of other systems.

\begin{theorem}[{\bf Hexagonal-rhombic-square-rectangular lattice phases transition}]\label{Th3} For any completely monotone potential function $\mathcal{F}$, there exists $\kappa_1<\kappa_2\in(0,1)$ depending on the potential function $\mathcal{F}$ such that
\begin{itemize}
  \item Case A: $\kappa\in[0,\kappa_1]$, $\min_{z\in \mathbb{H}}\mathcal{E}_\kappa(z)=\min_{z\in\Omega_{ea}}\mathcal{E}_\kappa(z)$; the minimizer corresponds to rectangular lattice in this case;
  \item Case B: $\kappa\in[\kappa_1,\kappa_2]$, $Minimizer_{z\in \mathbb{H}}\mathcal{E}_\kappa(z)=i$; the minimizer corresponds to square lattice in this case;
  \item Case C: $\kappa\in[\kappa_2,1]$, $\min_{z\in \mathbb{H}}\mathcal{E}_\kappa(z)=\min_{z\in\Omega_{eb}}\mathcal{E}_\kappa(z)$; the minimizer corresponds to rhombic lattice in this case;
\end{itemize}
For the parameters $\kappa_1,\kappa_2$, for any potential function which is completely monotone,
there has
\begin{itemize}
  \item $\kappa_1\leq\frac{1}{3}$;
  \item $\kappa_2\geq\frac{2}{3}$;
  \item $\kappa_1+\kappa_2=1$;
\end{itemize}
namely, $[\kappa_1, \kappa_2]\supseteq[\frac{1}{3}, \frac{2}{3}]$ in Case B.
In particular, when $\kappa=1$, $Minimizer_{z\in \mathbb{H}}\mathcal{E}_\kappa(z)=\frac{1}{2}+i\frac{\sqrt3}{2}$, which corresponds to hexagonal lattice; when $\kappa=0$, $Minimizer_{z\in \mathbb{H}}\mathcal{E}_\kappa(z)=\sqrt{3}i$, which corresponds to $\sqrt3:1$ rectangular lattice.
\end{theorem}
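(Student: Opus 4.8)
The plan is to write $\mathcal{E}_\kappa=(1-\kappa)\mathcal{E}_0+\kappa\,\mathcal{E}_1$, where $\mathcal{E}_1(z)=\sum_{(m,n)\in\mathbb{Z}^2\setminus\{0\}}\mathcal{F}\big(\tfrac{\pi}{\Im z}|mz+n|^2\big)$ is the ordinary one--species lattice energy and $\mathcal{E}_0(z)=\mathcal{E}_1\big(\tfrac{z+1}{2}\big)$ --- this identity comes from the substitution $w=\tfrac{z+1}{2}$, under which $\Im w=\tfrac12\Im z$ and $m(z+1)+2n=2(mw+n)$. Here $\mathcal{E}_1$ is $SL_2(\mathbb{Z})$--invariant and both functions are $\mathcal{G}_2$--invariant. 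By Theorem \ref{Th1} it is enough to minimise $\mathcal{E}_\kappa$ over $\Omega_e=\Omega_{ea}\cup\Omega_{eb}$. I would first record the normalisation that $\tfrac{1+i}{2}$ is $SL_2(\mathbb{Z})$--equivalent to $i$ (apply $z\mapsto-1/z$, then $z\mapsto z+1$), so that $\mathcal{E}_0(i)=\mathcal{E}_1(i)=:E_*$ and $\mathcal{E}_\kappa(i)=E_*$ for every $\kappa$.

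\textbf{Behaviour on the arcs and a $\kappa\leftrightarrow 1-\kappa$ symmetry.} Next I would analyse $\mathcal{E}_0,\mathcal{E}_1$ on the two arcs. Write $\phi_j(y)=\mathcal{E}_j(iy)$ for $y\in[1,\sqrt3]$. The claims are: $\phi_1$ is strictly increasing with a nondegenerate minimum at $y=1$, and $\phi_0$ is strictly decreasing with a nondegenerate maximum at $y=1$. The first is that the square lattice minimises the energy among rectangular lattices of aspect ratio in $[1,\sqrt3]$; via Bernstein's representation $\mathcal{F}(r)=\int_0^\infty e^{-rt}\,d\mu(t)$ this follows from the corresponding monotonicity of the lattice theta function $\theta(z,t)=\sum_{(m,n)\neq 0}e^{-\pi t|mz+n|^2/\Im z}$ along the imaginary axis, which is part of the analysis behind Theorem \ref{Th1}. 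For $\phi_0$: the map $z\mapsto-1/z$ followed by $z\mapsto z+1$ sends $\tfrac{1+iy}{2}$ to $\tfrac{y^2-1}{y^2+1}+i\,\tfrac{2y}{y^2+1}$, which runs over $\Omega_{eb}$ as $y$ runs over $[1,\sqrt3]$ (with $y=1\mapsto i$ and $y=\sqrt3\mapsto e^{i\pi/3}$); since $\mathcal{E}_1$ decreases from $i$ to $e^{i\pi/3}$ along the unit circle and $i$ is a nondegenerate local maximum of $\mathcal{E}_1$ restricted to that circle, $\phi_0$ behaves as claimed. In particular $\phi_0(y)\le E_*\le\phi_1(y)$ on $[1,\sqrt3]$, with equality only at $y=1$. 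The same two modular maps give a bijection $\iota:\Omega_{ea}\to\Omega_{eb}$, $\iota(iy)=\tfrac{y^2-1}{y^2+1}+i\,\tfrac{2y}{y^2+1}$, with $\mathcal{E}_1\circ\iota=\mathcal{E}_0$ and $\mathcal{E}_0\circ\iota=\mathcal{E}_1$ on $\Omega_{ea}$ (the first because $\iota(z)\sim\tfrac{z+1}{2}$, the second because $\tfrac{\iota(z)+1}{2}\sim z$, each an explicit $SL_2(\mathbb{Z})$ matrix check). Hence $\mathcal{E}_\kappa\circ\iota=\mathcal{E}_{1-\kappa}$ on $\Omega_{ea}$, so $\min_{\Omega_{eb}}\mathcal{E}_\kappa=\min_{\Omega_{ea}}\mathcal{E}_{1-\kappa}$.

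\textbf{The threshold and the three regimes.} On $\Omega_{ea}$,
\[
\mathcal{E}_\kappa(iy)-E_*=\kappa\big(\phi_1(y)-E_*\big)-(1-\kappa)\big(E_*-\phi_0(y)\big)
\]
is a difference of nonnegative terms, so $\min_{\Omega_{ea}}\mathcal{E}_\kappa<E_*$ iff $\tfrac{1-\kappa}{\kappa}>R$, where $R:=\inf_{1<y\le\sqrt3}\big(\phi_1(y)-E_*\big)/\big(E_*-\phi_0(y)\big)$; the nondegenerate extrema at $y=1$ make $0<R<\infty$ (the ratio $\to\phi_1''(1)/(-\phi_0''(1))>0$ as $y\to1^+$, and is finite and positive at $y=\sqrt3$). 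I would set $\kappa_1:=\tfrac1{1+R}$ and $\kappa_2:=1-\kappa_1$, so $\kappa_1\le\tfrac12\le\kappa_2$, $\min_{\Omega_{ea}}\mathcal{E}_\kappa<E_*$ for $\kappa<\kappa_1$ and $=E_*$ (at $i$) for $\kappa\ge\kappa_1$, and dually on $\Omega_{eb}$ with threshold $\kappa_2$. Combining the arcs: for $\kappa\in[0,\kappa_1]$, $\min_{\Omega_{eb}}\mathcal{E}_\kappa=E_*\ge\min_{\Omega_{ea}}\mathcal{E}_\kappa$, so the minimum of $\mathcal{E}_\kappa$ over $\mathbb{H}$ is attained on $\Omega_{ea}$, i.e.\ at a rectangular lattice; for $\kappa\in[\kappa_1,\kappa_2]$ both arc--minima equal $E_*=\mathcal{E}_\kappa(i)$ and the minimiser is $i$ (strictly for $\kappa\in(\kappa_1,\kappa_2)$; at the endpoints one also needs that the infimum defining $R$ is attained only in the limit $y\to1$, which follows from convexity of $\phi_0,\phi_1$ on $[1,\sqrt3]$), i.e.\ the square lattice; and for $\kappa\in[\kappa_2,1]$ symmetrically the minimum sits on $\Omega_{eb}$, a rhombic lattice. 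Theorem \ref{Th2} gives $\min_{\mathbb{H}}\mathcal{E}_{1/3}=E_*$ attained only at $i$, hence $\min_{\Omega_{ea}}\mathcal{E}_{1/3}=E_*$, which forces $\kappa_1\le\tfrac13$; then $\kappa_2=1-\kappa_1\ge\tfrac23$, and $\kappa_1+\kappa_2=1$. For the endpoints of the parameter interval: the $\kappa=1$ energy $\mathcal{E}_1$ is the single--species lattice energy, minimised over $\mathbb{H}$ at $e^{i\pi/3}\in\Omega_{eb}$ (classical, via $\theta(z,t)$ and Bernstein's theorem), and the $\kappa=0$ energy $\mathcal{E}_0(z)=\mathcal{E}_1(\tfrac{z+1}{2})$ is minimised where $\tfrac{z+1}{2}\sim e^{i\pi/3}$, e.g.\ at $z=\sqrt3\,i\in\Omega_{ea}$, the $\sqrt3:1$ rectangle.

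\textbf{Main obstacle.} The genuinely substantial input, needed uniformly over all completely monotone $\mathcal{F}$, is the one--variable behaviour of $\mathcal{E}_1$: strict monotonicity with a nondegenerate critical point at the square lattice both along $\Omega_{ea}$ and along $\Omega_{eb}$, together with convexity of $\phi_0,\phi_1$ on $[1,\sqrt3]$ for the endpoint uniqueness in Case B. By Bernstein's theorem each of these reduces to the matching statement for the lattice theta function $\theta(z,t)$, for every $t>0$ --- precisely the one--dimensional analysis underpinning Theorem \ref{Th1}, in the spirit of Montgomery's theta--function estimates and their refinements. Everything else --- the modular equivalences $\tfrac{1+i}{2}\sim i$, $\tfrac{1+iy}{2}\mapsto\Omega_{eb}$, $\tfrac{\iota(z)+1}{2}\sim z$, the affine--in--$\kappa$ bookkeeping, and the threshold computation --- is elementary.
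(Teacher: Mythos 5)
Your overall scaffolding is fine, but the proof hinges on a monotonicity input that you have not established and that is not where you say it is. The load-bearing claim is that $\phi_0(y)=\mathcal{E}_0(iy)$ is strictly decreasing on $[1,\sqrt3]$ (equivalently, that $\mathcal{E}_1$ is strictly decreasing along the arc $\Omega_{eb}$ from $i$ to $\frac12+i\frac{\sqrt3}{2}$), together with the nondegeneracy at $y=1$ and the convexity of $\phi_0,\phi_1$ used for the endpoint cases. You attribute this to ``the analysis behind Theorem \ref{Th1}'', but that analysis (Theorem \ref{Lemma2}, Montgomery's $x$-monotonicity, Corollary 4.2 of \cite{LW}) is purely about $\partial_x$ and only pushes minimizers onto $\Omega_e$; it says nothing about monotonicity along the circle. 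The axis results actually available are Lemma \ref{ML} ($\mathcal{W}_1(iy;\alpha)$ increasing for $y\ge1$, but $\mathcal{W}_0(iy;\alpha)$ increasing only for $y\ge\sqrt3$) and Lemma \ref{Lemma43} (the combination $\mathcal{W}_1+\sigma\mathcal{W}_0$, $\sigma\le 2$, on $[1,\sqrt3]$); transported by Lemmas \ref{l-dual} and \ref{l-circle-im} these give that $\mathcal{W}_0$ \emph{increases} along $\Omega_{eb}$ --- the opposite pairing from the one you need. Your claim is exactly the assertion that $\mathcal{W}_0(iy;\alpha)$ decreases on $[1,\sqrt3]$ for every $\alpha>0$ (no critical point of the one-species theta on the open arc), and the paper explicitly flags that $\mathcal{L}$ applied to the odd theta component changes sign on $[1,\sqrt3]$: this is precisely the delicate regime. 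So, as written, you have relocated the genuine difficulty into an unproven (even if plausibly true) lemma with an incorrect citation; the same remark applies to the convexity claim at the endpoints.

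It is worth noting that the paper's own proof is organized so as to never need this arc monotonicity: it defines $\sigma_M$ through the operator $\mathcal{L}=\partial_y^2+\frac{2}{y}\partial_y$ on the imaginary axis, proves $\sigma_M\ge 2$ (Lemma \ref{Lemma43}), concludes that $\mathcal{W}_1+\sigma\mathcal{W}_0$ is minimized at $y=1$ for $\sigma\le\sigma_M$ (Theorem \ref{4th1}), transfers this and Lemma \ref{ML} to $\Omega_{eb}$ via Lemmas \ref{l-dual} and \ref{l-circle-im}, and then, for small $\kappa$, writes $\mathcal{W}_\kappa$ as a nonnegative combination of $\mathcal{W}_{\kappa_1'}$ and $\mathcal{W}_0$, both minimized on $\Omega_{eb}$ at $i$; combined with Theorem \ref{Th1} this forces the minimum onto $\Omega_{ea}$, with Case C by duality and Case B from Theorem \ref{Th41}. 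Your modular bookkeeping ($\frac{1+i}{2}\sim i$, the involution $\iota$ with $\mathcal{E}_\kappa\circ\iota=\mathcal{E}_{1-\kappa}$, the threshold $R$ and $\kappa_1+\kappa_2=1$, and deducing $\kappa_1\le\frac13$ from Theorem \ref{Th2}, which is proved independently of Theorem \ref{Th3}) is correct and mirrors Lemma \ref{l-dual}; the argument would close if you either proved the decrease of $\mathcal{E}_0(iy)$ on $[1,\sqrt3]$ for every completely monotone potential (a Lemma \ref{Lemma43}-type estimate, not a quotation of Theorem \ref{Th1}), or restructured the comparison on $\Omega_{eb}$ the way the paper does, using only the combinations controlled by $\sigma_M$.
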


In physical and biological models, the functional $\mathcal{E}_\kappa(z)$ is referred to the energy of competing periodical system and the parameter $\kappa$ reflects the competing strength of the system. As the parameter $\kappa$ varies continuously, we see from Theorem \ref{Th3} clearly that, the optimal lattice shape goes through hexagonal-rhombic-square-rectangular lattice continuously (as $\kappa$ moves from $1$ to $0$  continuously). Theorem \ref{Th2} shows that  under the completely monotone potentials  and under certain positive interval of interaction strength, the minimal energy lattice shape is the square lattice shape. This phenomenon has appeared and conjectured in two component Bose-Einstein condensates (see in Mueller-Ho \cite{Mue2002} and Luo-Wei \cite{LW}). When $\kappa=1$ the optimality of hexagonal lattice for sum of Gaussian lattice has been investigated recently in \cite{Betermin2021-2}.

\medskip

The key tool in proving Theorems \ref{Th1}-\ref{Th3} is the so-called  Bernstein representation formula for general completely monotone function $ \mathcal{F}$. To introduce the formula we define
 \begin{equation}
 \label{Wzalphadef}
\mathcal{W}_\kappa(z;\alpha)=
\sum_{(m,n)\in\mathbb{Z}^2\backslash \{0\}}(1-\kappa)\exp(-\alpha\frac{\pi }{2\Im(z) }|m(z+1)+2n|^2)+\kappa\exp(-\alpha\frac{\pi }{\Im(z) }|mz+n|^2)
\end{equation}
which is the energy functional associated to the special Gaussian type completely monotone function $\exp(-\alpha x^2)$ with a free parameter $\alpha>0$.

The celebrated Bernstein theorem (\cite{Bernstein}) states that for any completely monotone function $F$, there is an non-negative finite Borel measure on $[0,\infty)$ with cumulative distribution function $\lambda(\alpha)$ such that
\begin{equation}\nonumber
F(x)=\int_0^\infty e^{-\alpha x} d\lambda(\alpha).
\end{equation}

As a consequence we obtain   the following integral representation of $\mathcal{E}_\kappa(z)$: there is an non-negative finite Borel measure on $[0,\infty)$ with cumulative distribution function $\lambda(\alpha)$ such that
\begin{equation}\label{Bern}
\mathcal{E}_\kappa(z)=\int_0^\infty \mathcal{W}_\kappa(z;\alpha) d\lambda(\alpha).
\end{equation}

The formula (\ref{Bern}) reduces the proof of Theorems \ref{Th1}-\ref{Th3} to the study of the universal  properties $ \mathcal{W}_\kappa (z;\alpha)$, which is the main contribution of this paper.

\subsection{Competing systems with Riesz potential}

One interesting and important competing potential is the Riesz potential type. We discuss the results on competing systems  with Riesz potential here.
Let $\mathcal{F}(x):=\frac{1}{x^s},s>1$ be the Riesz potential  which is a special case of completely monotone functions.
We consider the associated competing system energy functional
\begin{equation}\label{Riesz}
\mathcal{E}_R(z;\kappa,s):=\mathcal{E}_R(\kappa,s,\Lambda)=\sum_{\mathbb{P}\in\Lambda\backslash \{0\}} \Big((1-\kappa)\frac{1}{|\frac{\mathbb{P}+1}{2}|^{2s}}+\kappa \frac{1}{|\mathbb{P}|^{2s}}\Big), s>1,
\end{equation}
with $\Lambda =\sqrt{\frac{\pi}{\Im(z)}}\Big({\mathbb Z}\oplus z{\mathbb Z}\Big)$.

Observe that up to a constant (depending on $s$)  the Riesz sum  $\sum_{\mathbb{P}\in\Lambda\backslash \{0\}}  \frac{1}{|\mathbb{P}|^{2s}}$   is the  Eisenstein series $G(z,s)$ which is defined as
\begin{equation}\label{Gdef1}
G(z,s):=\frac{1}{2}\sum_{(m,n)\in\mathbb{Z}^2\backslash \{0\}}\frac{\big(\Im(z)\big)^s}{|mz+n|^{2s}}.
\end{equation}
See \cite{Cohen}.

The lattice energy functional $\mathcal{E}_R(z;\kappa,s)$ under Riesz potential (defined at (\ref{Riesz})) can be written up to a constant (depending on $s$) as
\begin{equation}\label{EisenW}
\mathbf{G}_R (z;\kappa, s):=(1-\kappa)\mathbf{G}(\frac{z+1}{2},s)+\kappa \mathbf{G}(z,s), \kappa\in[0,1],
\end{equation}
where $\mathbf{G} (z,s):= \pi^{-s} \Gamma (s) G(z,s)$ is a rescaled Eisenstein function which also appears frequently in number theory (\cite{Cohen}).

Theorems \ref{Th1}-\ref{Th3} can be applied to $ \mathbf{G}_R (z;\kappa,s)$. However an important observation, as shown in the appendix, is that
although the $s-$deriveatives are not completely monotone functions, we can still apply the techniques in proving Theorem \ref{Th1}-\ref{Th3} to the $s-$derivatives.

\begin{theorem}\label{Th4} Let  $\mathbf{G}_R (z; \kappa,s)$ be defined in \eqref{EisenW}.

\noindent
(a) For any $\kappa\in[0,1],s>1,j=1,2,3\cdots$, it follows that

\begin{equation}\nonumber
\min_{z\in \mathbb{H}}\frac{d^j}{ds^j}\mathbf{G}_R (z;\kappa,s)=\min_{z\in\Omega_e}\frac{d^j}{ds^j}\mathbf{G}_L(z;\kappa,s).
\end{equation}

\noindent
(b) There exists $\kappa_1<\kappa_2\in(0,1)$ depending on  $s$ such that $ [\frac{1}{3},\frac{2}{3}]\subseteq[\kappa_1, \kappa_2], \kappa_1+\kappa_2=1$ and
\begin{itemize}
  \item Case A: $\kappa\in[0,\kappa_1]$, $\min_{z\in \mathbb{H}}\frac{d^j}{ds^j}\mathbf{G}_R (z;\kappa,s)=\min_{z\in\Omega_{ea}}\frac{d^j}{ds^j}\mathbf{G}_R(z;\kappa,s)$; the minimizer corresponds to rectangular lattice in this case;
  \item Case B: $\kappa\in[\kappa_1,\kappa_2]$, $Minimizer_{z\in \mathbb{H}}\frac{d^j}{ds^j}\mathbf{G}_R (z;\kappa,s)=i$; the minimizer corresponds to square lattice in this case;
  \item Case C: $\kappa\in[\kappa_2,1]$, $\min_{z\in \mathbb{H}}\frac{d^j}{ds^j}\mathbf{G}_R (z;\kappa,s)=\min_{z\in\Omega_{eb}}\frac{d^j}{ds^j}\mathbf{G}_R (z;\kappa,s)$; the minimizer corresponds to rhombic lattice in this case;
\end{itemize}

In particular, when $\kappa=1$, $Minimizer_{z\in \mathbb{H}}\frac{d^j}{ds^j}\mathbf{G} (z,s)=\frac{1}{2}+i\frac{\sqrt3}{2}$, which corresponds to hexagonal lattice; when $\kappa=0$, $Minimizer_{z\in \mathbb{H}}\frac{d^j}{ds^j}\mathbf{G} (\frac{z+1}{2}, s)=\sqrt{3}i$, which corresponds to $\sqrt3:1$ rectangular lattice.

\end{theorem}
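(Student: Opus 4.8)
The plan is to reduce Theorem \ref{Th4} to the Gaussian building block, exactly as Theorems \ref{Th1}--\ref{Th3} were reduced via the Bernstein representation \eqref{Bern}, but now accounting for the fact that the $s$-derivatives of a Riesz potential are \emph{not} completely monotone. First I would write, for $s>1$,
\begin{equation}\nonumber
\mathbf{G}_R(z;\kappa,s)=\int_0^\infty \mathcal{W}_\kappa(z;\alpha)\,\mu_s(d\alpha),
\end{equation}
where $\mu_s$ is the explicit measure coming from the identity $\pi^{-s}\Gamma(s)\,t^{-s}=\int_0^\infty e^{-\alpha t}\,\alpha^{s-1}\pi^{-s}\,d\alpha$ applied termwise to \eqref{Gdef1} and \eqref{EisenW}; concretely $\mu_s(d\alpha)=\alpha^{s-1}\pi^{-s}\,d\alpha$ up to the (harmless) normalization that matches \eqref{Wzalphadef}. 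The point is that differentiating in $s$ passes under the integral sign (justified by dominated convergence, using exponential decay of $\mathcal{W}_\kappa(z;\alpha)$ in $\alpha$ on compact sets of $z$ and polynomial-times-logarithmic growth of the resulting densities), so that
\begin{equation}\nonumber
\frac{d^j}{ds^j}\mathbf{G}_R(z;\kappa,s)=\int_0^\infty \mathcal{W}_\kappa(z;\alpha)\,(\log\alpha)^j\,\alpha^{s-1}\pi^{-s}\,d\alpha .
\end{equation}
The measure $(\log\alpha)^j\alpha^{s-1}\pi^{-s}\,d\alpha$ is a \emph{signed} measure (because $(\log\alpha)^j$ changes sign at $\alpha=1$ for odd $j$ and is nonnegative but vanishing there for even $j$), which is precisely why the completely-monotone machinery does not apply verbatim.

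The key structural input I would invoke is the pointwise universal property of the Gaussian kernel that underlies Theorems \ref{Th1}--\ref{Th3}: for \emph{every fixed} $\alpha>0$, the function $z\mapsto \mathcal{W}_\kappa(z;\alpha)$ already satisfies the minimum principle and the phase-transition picture on $\mathbb{H}$, with the \emph{same} critical/boundary structure ($\Omega_e$, the point $i$, the endpoints $\tfrac12+i\tfrac{\sqrt3}{2}$ and $\sqrt3 i$) and, crucially, with the relevant monotonicity/convexity in $z$ holding with a sign that is \emph{independent of $\alpha$}. Granting that (it is what the proofs of Theorems \ref{Th1}--\ref{Th3} establish for the integrand before integrating against $d\lambda$), the argument for part (a) is: on the fundamental domain the gradient flow / reflection argument shows $\mathcal{W}_\kappa(z;\alpha)\ge \mathcal{W}_\kappa(z^\ast;\alpha)$ where $z^\ast$ is the nearest point of $\Omega_e$ along the appropriate monotone direction, with the inequality valid for all $\alpha$; integrating this single inequality against the signed density $(\log\alpha)^j\alpha^{s-1}\pi^{-s}$ is \emph{not} automatically sign-preserving, so instead I would integrate the stronger statement used in the earlier proofs — namely that the relevant first- or second-order $z$-derivative of $\mathcal{W}_\kappa(z;\alpha)$ has a fixed sign for \emph{all} $\alpha$ simultaneously on the region in question. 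A fixed-sign derivative, integrated against \emph{any} measure (signed or not), still has that fixed sign; hence $\frac{d^j}{ds^j}\mathbf{G}_R(z;\kappa,s)$ inherits the same monotonicity in $z$, and therefore the same minimum principle on $\Omega_e$. This is the mechanism flagged in the sentence ``although the $s$-derivatives are not completely monotone functions, we can still apply the techniques''. Part (b) follows the same way by integrating, against $(\log\alpha)^j\alpha^{s-1}\pi^{-s}\,d\alpha$, the $\alpha$-uniform sign statements that in the proof of Theorems \ref{Th2}--\ref{Th3} pin the minimizer to $i$ on $[\kappa_1,\kappa_2]$, to $\Omega_{ea}$ on $[0,\kappa_1]$, and to $\Omega_{eb}$ on $[\kappa_2,1]$; the inclusion $[\tfrac13,\tfrac23]\subseteq[\kappa_1,\kappa_2]$ and the symmetry $\kappa_1+\kappa_2=1$ are transported verbatim because the symmetry $z\mapsto$ (the involution swapping the two lattices) acts on $\mathcal{W}_\kappa(z;\alpha)$ exchanging $\kappa\leftrightarrow 1-\kappa$ for every $\alpha$. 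The endpoint identifications $\kappa=1\Rightarrow$ hexagonal and $\kappa=0\Rightarrow\sqrt3 i$ are then just the classical one-lattice facts (optimality of the hexagonal lattice for Gaussians / Eisenstein series and its $s$-derivatives) integrated against the same density.

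There is one point of care I would highlight as the likely bottleneck. The clean ``fixed sign for all $\alpha$'' statement is exactly what makes the reduction trivial, but one must check it is genuinely available in the form the earlier proofs give it: the proofs of Theorems \ref{Th1}--\ref{Th3} may combine several $\alpha$-regimes (small $\alpha$ versus large $\alpha$, using the modular/Jacobi theta functional equation for the large-$\alpha$ part) where the sign of $\partial_z \mathcal{W}_\kappa$ or $\partial^2 \mathcal{W}_\kappa$ is established by \emph{different} estimates in different ranges of $\alpha$, yet always with the \emph{same} sign. If instead some step only proves the sign \emph{after} integrating $d\lambda$ — i.e. uses cancellation between $\alpha$-regimes — then that step does not survive multiplication by $(\log\alpha)^j$, and one would have to redo it with the weight $(\log\alpha)^j\alpha^{s-1}$ in place of $\alpha^{s-1}$, splitting the integral at $\alpha=1$ and controlling each piece (for odd $j$ the two pieces have opposite sign and one needs a quantitative comparison). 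So the hard part is auditing the proofs of Theorems \ref{Th1}--\ref{Th3} to isolate the $\alpha$-pointwise (as opposed to $\alpha$-integrated) sign inequalities, and, where only the integrated version is available, re-running the estimate with the extra $(\log\alpha)^j$ factor — a task that is routine in spirit (the weight is mild: $(\log\alpha)^j\alpha^{s-1}$ is integrable against the exponentially small tails of $\mathcal{W}_\kappa$ at $\alpha\to\infty$ after using the functional equation, and against $\alpha^{s-1}$, $s>1$, at $\alpha\to0$) but must be carried out carefully near $\alpha=1$ for the odd derivatives. I would organize the appendix so that Theorem \ref{Th4} is stated as a corollary of a single lemma asserting the $\alpha$-uniform sign of the decisive $z$-derivatives of $\mathcal{W}_\kappa(z;\alpha)$, after which both (a) and (b) are one line each.
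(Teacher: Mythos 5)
There is a genuine gap, and it sits at the load-bearing sentence ``a fixed-sign derivative, integrated against \emph{any} measure (signed or not), still has that fixed sign.'' That is false: if $g(\alpha)\geq 0$ for every $\alpha$ and the measure has a nontrivial negative part, then $\int g\,d\mu$ can have either sign. In your Mellin representation the density $(\log\alpha)^j\alpha^{s-1}\,d\alpha$ is genuinely negative on $(0,1)$ when $j$ is odd, and the contribution of $\alpha\in(0,1)$ is not negligible; so even granting the $\alpha$-uniform sign of the decisive $z$-derivatives of $\mathcal{W}_\kappa(z;\alpha)$ (which the proofs of Theorems \ref{Th1}--\ref{Th3} do provide, for $\alpha\geq1$ after the reduction \eqref{W124}), your one-line conclusion does not follow for odd $j$. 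You half-notice this at the end (``for odd $j$ the two pieces have opposite sign and one needs a quantitative comparison''), but you misplace the difficulty: it arises even when the pointwise-in-$\alpha$ sign statements are available, and ``re-running the estimates with the extra $(\log\alpha)^j$ factor'' across the sign change at $\alpha=1$ is not a routine audit --- it is the whole problem, and it is not what the paper does.

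The paper's proof removes the signed measure altogether by a different representation. Starting from \eqref{Gamma}, it rewrites $\pi^{-s}\Gamma(s)a^{-s}$ in the symmetrized, folded form $\big(\frac{1}{s-1}-\frac{1}{s}\big)+\int_1^\infty(e^{-\pi a t}-1)(t^s+t^{1-s})\frac{dt}{t}$, i.e. the classical incomplete-gamma/Riemann device: split the Mellin integral at $t=1$ and use the theta functional equation $\mathcal{W}_\kappa(z;\alpha)=\alpha\,\mathcal{W}_\kappa(z;\frac{1}{\alpha})$ to fold the range $(0,1)$ onto $(1,\infty)$. Summing over the lattice gives \eqref{GLW}: $\mathbf{G}_R(z;\kappa,s)$ equals a $z$-independent term plus $\int_1^\infty(\mathcal{W}_\kappa(z;t)-1)(t^s+t^{1-s})\frac{dt}{t}$. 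Now $\frac{d^j}{ds^j}(t^s+t^{1-s})=(\log t)^j\big(t^s+(-1)^j t^{1-s}\big)\geq 0$ for all $t\geq1$, $s\geq\frac12$, $j\geq1$ --- for odd $j$ because $t^s\geq t^{1-s}$ on $t\geq1$. Hence, after differentiating under the integral sign, one integrates the $z$-monotonicity statements for $\mathcal{W}_\kappa(z;t)$, $t\geq1$, against a \emph{nonnegative} weight, and parts (a) and (b) follow exactly as in Theorems \ref{Th1}--\ref{Th3}; the subtracted $1$ and the term $\frac{1}{s-1}-\frac{1}{s}$ are independent of $z$ and harmless. This folding is the key idea missing from your proposal; with it, the odd-$j$ case requires no new estimate near $\alpha=1$. (A minor further slip: with the kernel $e^{-\pi a\alpha}$ the Mellin density is $\alpha^{s-1}d\alpha$ with no $\pi^{-s}$ factor; as you wrote it, differentiating in $s$ would also produce $\log\pi$ terms you did not account for.)
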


\begin{remark} The Eisentein series $G(z,s)$ is closely related to the Epstein zeta function $\zeta_Q(s)$, that is defined for
$\Re(s)>1$ by
\begin{equation}\nonumber
\zeta_Q(s):=\frac{1}{2}\sum_{(m,n)\in\mathbb{Z}^2\backslash \{0\}}\frac{1}{Q(m,n)^s},
\end{equation}
where $Q(x,y)=ax^2+bxy+cy^2$ is a positive definite quadratic form with real coefficients$($i.e., $a>0$, $c>0$ and $b^2-4ac<0$$)$.

Denote that $z=\frac{-b}{2a}+i\frac{\sqrt{4ac-b^2}}{2a}$ and $D=b^2-4ac$. Then the Epstein zeta function and Eisenstein series is connected by
\begin{equation}\label{Zeta}
\zeta_Q(s)=(\frac{|D|}{4})^{-\frac{s}{2}} G(z,s).
\end{equation}
See \cite{Cohen}. We also have an analogue theorem for Epstein zeta function.

\end{remark}


By the Kronecker limit formula(See for example  \cite{Siegel} and \cite{Serfaty2012} for the details), one has
\begin{equation}\aligned
\frac{d}{ds}\mid_{s=1}G(z,s)
=-\frac{1}{2}\log \big | \im \big (z \big ) \eta\big (z \big) \big|-\log(2\pi).
\endaligned\end{equation}
It follows that
\begin{equation}\aligned\label{YYY}
&\frac{d}{ds}\mid_{s=1}G_R (z;\kappa,s),
{G}_R (z;\kappa, s):=(1-\kappa){G}(\frac{z+1}{2},s)+\kappa {G}(z,s)\\
=&-\frac{1}{2}\Big(\kappa \log \big | \im \big (z \big ) \eta\big (z \big) \big|
  + (1-\kappa)\log \big | \im \big(\frac{z+1}{2}\big)
  \eta \big(\frac{z+1}{2}\big) \big |\Big)-\log(2\pi),
\endaligned\end{equation}
where
\begin{equation}
  \label{eta-intro}
  \eta(z) = e^{\frac{\pi}{3} z i}\prod_{n=1}^\infty\big(1- e^{2\pi nz i}\big)^4
\end{equation}
is the fourth power of the Dedekind eta function.
The derivative formula \eqref{YYY} connects the lattice energy functional with Riesz potential to lattice energy functional with Coulomb potential.
For convenience, define
\begin{equation}\label{fb-intro}
  \mathcal{N}_\kappa (z):= -\Big(\kappa \log \big | \im \big (z \big ) \eta\big (z \big) \big|
  + (1-\kappa)\log \big | \im \big(\frac{z+1}{2}\big)
  \eta \big(\frac{z+1}{2}\big) \big |\Big).
\end{equation}
The functional $\mathcal{N}_\kappa (z)$ is of the form of Dedekind eta functions, is a type of lattice energy functional with
Coulomb potential.

The reduced energy functional $ \mathcal{N}_\kappa (z)$ arises in the study of Triblock Ohta-Kawasaki system. For $\kappa=1$ it arises in the Diblock copolymer Ohta-Kawasaki system and also Ginzburg-Landau theory. See \cite{LRW} and \cite{Serfaty2012}.
Applying Theorem \ref{Th4} we then recover the results in our previous paper \cite{LRW}, in which we proved these results by a different method.
We state them here since our new results here include these as a special case and we have proved these results without using any PDE theory. This also confirms that {\bf the hexagonal-rhombic-square-rectangular lattice phases transition} in Theorem \ref{Th3} is quite general in various periodical competing
system.

In the numerical computation done in \cite{LRW}, we have for $\mathcal{N}_\kappa (z)$
$$
\kappa_1:= 0.1867\dots,\; \kappa_2:=0.8132\cdots.
$$
This confirms that the upper and lower bounds in Theorem \ref{Th3}, in which we proved that $
\kappa_1\leq\frac{1}{3},\; \kappa_2\geq\frac{2}{3}.$

\medskip

The paper is organized as follows: In section \ref{PreA}, we aim to give some basic properties of the lattice energy functional and
provide some basic tools in estimates. We prove Theorem \ref{Th1} and Theorem \ref{Th2}, Theorem \ref{Th3} in Section \ref{Theorem1} and Section \ref{Theorem2}
respectively.

\section{Preliminaries: Invariances of $\mathcal{E}_\kappa (z)$ and estimates on theta functions  }\label{PreA}
\setcounter{equation}{0}

Let $\mathcal{E}_\kappa (z)$ be the lattice energy functional defined at (\ref{F1}). The end point cases of $\mathcal{E}_\kappa(z)$ are
 \begin{equation}\label{E0}
\mathcal{E}_0(z)=
\sum_{(m,n)\in\mathbb{Z}^2\backslash \{0\}}\mathcal{F}(\frac{\pi }{2\Im(z) }|m(z+1)+2n|^2)
\end{equation}
and
 \begin{equation}\label{E1}
\mathcal{E}_1(z)=
\sum_{(m,n)\in\mathbb{Z}^2\backslash \{0\}}\mathcal{F}(\frac{\pi }{\Im(z) }|mz+n|^2).
\end{equation}
Then $\mathcal{E}_\kappa$ can be written as interpolation of $\mathcal{E}_0$ and $ \mathcal{E}_1$:
 \begin{equation}\label{Ekappa}
\mathcal{E}_\kappa(z)=\kappa\mathcal{E}_1(z)+(1-\kappa)\mathcal{E}_0(z).
\end{equation}

 In this section, we collect some basic properties of the functional $\mathcal{E}_\kappa$, including invariance and monotone properties, and some key estimates of $\mathcal{W}_\kappa$.

We first study the invariance properties of $ \mathcal{E}_\kappa$. We use the following definition of fundamental domain which is slightly different from the classical definition (see \cite{Mon1988}):
\begin{definition} [page 108, \cite{Eva1973}]\label{DefF}
The fundamental domain associated to group $G$ is a connected domain $\mathcal{D}$ satisfying
\begin{itemize}
  \item For any $z\in\mathbb{H}$, there exists an element $\pi\in G$ such that $\pi(z)\in\overline{\mathcal{D}}$;
  \item Suppose $z_1,z_2\in\mathcal{D}$ and $\pi(z_1)=z_2$ for some $\pi\in G$, then $z_1=z_2$ and $\pi=\pm Id$.
\end{itemize}
\end{definition}

For example, let $\Gamma$ be the group generated by the actions $\tau\mapsto -\frac{1}{\tau},\;\; \tau\mapsto \tau+1$, then
by Definition \ref{DefF}, the fundamental domain associated to modular group $\Gamma$ is
\begin{equation}\aligned\label{Fd1}
\mathcal{D}_\Gamma:=\{
z\in\mathbb{H}: |z|>1,\; -\frac{1}{2}<x<\frac{1}{2}
\}.
\endaligned\end{equation}
which is open.  Note that the fundamental domain can be open. (See [page 30, \cite{Apo1976}].)

Next we introduce another two groups related  to the functional $\mathcal{E}_\kappa(z)$. The generators of these groups are given by
\begin{equation}\aligned\label{GroupG1}
\mathcal{G}_1: \hbox{the group generated by} \;\;\tau\mapsto -\frac{1}{\tau},\;\; \tau\mapsto \tau+1,\;\;\tau\mapsto -\overline{\tau},
\endaligned\end{equation}
\begin{equation}\aligned\label{GroupG2}
\mathcal{G}_2: \hbox{the group generated by} \;\;\tau\mapsto -\frac{1}{\tau},\;\; \tau\mapsto \tau+2,\;\;\tau\mapsto -\overline{\tau}.
\endaligned\end{equation}
It is easy to see that
the fundamental domains associated to group $\mathcal{G}_j,j=1,2$ denoted by $\mathcal{D}_{\mathcal{G}_1},\mathcal{D}_{\mathcal{G}_2}$ are
\begin{equation}\aligned\label{Fd3}
\mathcal{D}_{\mathcal{G}_1}:=\{
z\in\mathbb{H}: |z|>1,\; 0<x<\frac{1}{2}
\}
\endaligned\end{equation}
\begin{equation}\aligned\label{Fd4}
\mathcal{D}_{\mathcal{G}_2}:=\{
z\in\mathbb{H}: |z|>1,\; 0<x<1
\}.
\endaligned\end{equation}
Clearly we have that $\mathcal{G}_2 \subseteq \mathcal{G}_1,\;\;\mathcal{D}_{\mathcal{G}_1}\subseteq\mathcal{D}_{\mathcal{G}_2}$.


The following two Lemmas  can be checked directly from the definition.
\begin{lemma}\label{LG1} We have the following fundamental  invariant properties of the functionals:
\begin{itemize}
  \item $\mathcal{E}_1(z)$ is invariant under the group $\mathcal{G}_1$, i.e., for any $\gamma\in\mathcal{G}_1$
  $$
  \mathcal{E}_1(\gamma(z))=\mathcal{E}_1(z).
  $$
  \item $\mathcal{E}_0(z)$ is invariant under the group $\mathcal{G}_2$, i.e.,
 for any $\pi\in\mathcal{G}_2$
  $$
  \mathcal{E}_0(\pi(z))=\mathcal{E}_0(z).
  $$
  \item $\mathcal{E}_\kappa(z)$$(\kappa\in(0,1))$ is invariant under the group $\mathcal{G}_2$, i.e.,
 for any $\pi\in\mathcal{G}_2$
  $$
  \mathcal{E}_0(\pi(z))=\mathcal{E}_0(z).
  $$

\end{itemize}

As a consequence, the fundamental domain of the functionals $\mathcal{E}_1(z)$ and $\mathcal{E}_0(z)$
is $\mathcal{D}_{\mathcal{G}_1}$ and $\mathcal{D}_{\mathcal{G}_2}$  respectively, and  the fundamental domain for the general function $\mathcal{E}_\kappa(z)$$(\kappa\in(0,1))$ is $\mathcal{D}_{\mathcal{G}_2}$.

\end{lemma}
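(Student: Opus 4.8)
The plan is to verify each invariance claim on the generators of the relevant group and to reduce the $\mathcal{E}_0$ and $\mathcal{E}_\kappa$ statements to the $\mathcal{E}_1$ statement. Since $\mathcal{F}$ is completely monotone it is in particular positive, so every term of the defining lattice series is nonnegative; hence any bijective reindexing of $\mathbb{Z}^2\setminus\{0\}$ preserves the sum in $[0,+\infty]$ and no separate convergence discussion is needed. First I would establish $\mathcal{G}_1$-invariance of $\mathcal{E}_1(z)=\sum_{(m,n)\neq(0,0)}\mathcal{F}\big(\tfrac{\pi}{\Im(z)}|mz+n|^2\big)$ by running through the three generators: under $z\mapsto z+1$ one has $\Im(z+1)=\Im(z)$ and $|m(z+1)+n|^2=|mz+(m+n)|^2$, so the substitution $(m,n)\mapsto(m,m+n)$ does it; under $z\mapsto -1/z$ one has $\Im(-1/z)=\Im(z)/|z|^2$ and $|{-m/z}+n|^2=|nz-m|^2/|z|^2$, hence $\tfrac{\pi}{\Im(-1/z)}|{-m/z}+n|^2=\tfrac{\pi}{\Im(z)}|nz-m|^2$, and $(m,n)\mapsto(n,-m)$ closes the argument; under $z\mapsto-\bar z$ one has $\Im(-\bar z)=\Im(z)$ and $|{-m\bar z}+n|^2=|mz-n|^2$ because $m,n$ are real, so $(m,n)\mapsto(m,-n)$ works. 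All three maps are bijections of $\mathbb{Z}^2\setminus\{0\}$, so $\mathcal{E}_1$ is $\mathcal{G}_1$-invariant.

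Next I would record the identity $\mathcal{E}_0(z)=\mathcal{E}_1\big(\tfrac{z+1}{2}\big)$, which is immediate from $\Im\big(\tfrac{z+1}{2}\big)=\tfrac12\Im(z)$ and $\big|m\tfrac{z+1}{2}+n\big|^2=\tfrac14|m(z+1)+2n|^2$. Writing $\phi(z)=\tfrac{z+1}{2}$, so $\phi^{-1}(w)=2w-1$, a short computation shows that conjugation by $\phi$ sends the three generators of $\mathcal{G}_2$ into $\mathcal{G}_1$: $z\mapsto z+2$ becomes $w\mapsto w+1$, $z\mapsto-\bar z$ becomes $w\mapsto 1-\bar w$, and $z\mapsto-1/z$ becomes $w\mapsto\tfrac{w-1}{2w-1}$, whose matrix $\big(\begin{smallmatrix}1&-1\\2&-1\end{smallmatrix}\big)$ has determinant $1$ and so represents an element of $\mathrm{PSL}(2,\mathbb{Z})\subseteq\mathcal{G}_1$. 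Therefore $\phi\,\mathcal{G}_2\,\phi^{-1}\subseteq\mathcal{G}_1$, and for $\gamma\in\mathcal{G}_2$ we get $\mathcal{E}_0(\gamma z)=\mathcal{E}_1(\phi\gamma z)=\mathcal{E}_1\big((\phi\gamma\phi^{-1})(\phi z)\big)=\mathcal{E}_1(\phi z)=\mathcal{E}_0(z)$, using the $\mathcal{G}_1$-invariance just proved. Since $\mathcal{G}_2\subseteq\mathcal{G}_1$, $\mathcal{E}_1$ is a fortiori $\mathcal{G}_2$-invariant, so $\mathcal{E}_\kappa=\kappa\mathcal{E}_1+(1-\kappa)\mathcal{E}_0$ is $\mathcal{G}_2$-invariant for every $\kappa\in[0,1]$ as a combination of $\mathcal{G}_2$-invariant functions. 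The assertion about fundamental domains then follows from the general remark that a $\mathcal{G}_j$-invariant functional is determined by its restriction to $\mathcal{D}_{\mathcal{G}_j}$ in the sense of Definition \ref{DefF}.

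I do not expect a genuine obstacle here; the lemma really "can be checked directly," and the proof is essentially bookkeeping. The two points that demand a little care are (i) keeping track of the three reindexing bijections (with positivity of $\mathcal{F}$ removing any convergence concern), and (ii) the conjugation identity $\phi\,\mathcal{G}_2\,\phi^{-1}\subseteq\mathcal{G}_1$, which is precisely what allows the $\mathcal{E}_0$ case — and hence the $\mathcal{E}_\kappa$ case — to piggyback on the $\mathcal{E}_1$ case. If one wanted the term "fundamental domain" in its strongest sense one should additionally check that $\mathcal{G}_1$ and $\mathcal{G}_2$ are the full symmetry groups of $\mathcal{E}_1$ and $\mathcal{E}_0$, but only the stated invariance (hence the inclusion of the minimization domain in $\overline{\mathcal{D}_{\mathcal{G}_j}}$) is used in what follows.
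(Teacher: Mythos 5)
Your proposal is correct, and it is essentially the direct verification the paper itself invokes (the paper offers no written proof, stating only that the lemma "can be checked directly from the definition"): generator-by-generator reindexing for $\mathcal{E}_1$, the identity $\mathcal{E}_0(z)=\mathcal{E}_1\big(\tfrac{z+1}{2}\big)$ together with $\phi\,\mathcal{G}_2\,\phi^{-1}\subseteq\mathcal{G}_1$ for $\mathcal{E}_0$, and linearity plus $\mathcal{G}_2\subseteq\mathcal{G}_1$ for $\mathcal{E}_\kappa$, with positivity of $\mathcal{F}$ disposing of rearrangement issues. Your conjugation route for the $\mathcal{E}_0$ case is a clean way to organize what would otherwise be a second round of reindexing, and your closing caveat about the precise sense of "fundamental domain" (only invariance is used downstream) is apt.
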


By the integral representation formula (\ref{Bern})  and the monotonicity properties of $\mathcal{W}_0 $ and $\mathcal{W}_1$ (see \cite{LW}), we have the following  important monotonicity properties for  the functionals $\mathcal{E}_0(z), \mathcal{E}_1(z)$ in the corresponding domain:
\begin{theorem} \label{Lemma2} There holds
\begin{itemize}
  \item
\begin{equation}\aligned\nonumber
\frac{\partial}{\partial x}\mathcal{E}_0(z)>0, \;\;\forall \;z\in \mathcal{D}_{\mathcal{G}_2},
\endaligned\end{equation}
  \item
$$
\frac{\partial}{\partial x}\mathcal{E}_1(z)<0,\;\;\forall \;z\in\Omega_{\mathcal{C}_1}.
$$
Here
\begin{equation}\aligned\nonumber
\Omega_{\mathcal{C}_1}:=\{z\mid 0<x<\frac{1}{2}, y>\sqrt{x-x^2}
\}.
\endaligned\end{equation}

\end{itemize}

\end{theorem}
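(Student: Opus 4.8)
The plan is to reduce both assertions to the Gaussian case $\mathcal{W}_\kappa(z;\alpha)$ via the Bernstein integral representation \eqref{Bern}, since the measure $d\lambda$ is non-negative and integrating a pointwise inequality in $z$ preserves sign; the bulk of the argument is therefore the computation of $\partial_x \mathcal{W}_0$ and $\partial_x \mathcal{W}_1$. Recall $\mathcal{W}_0(z;\alpha)=\sum_{(m,n)\neq 0}\exp\!\big(-\alpha\tfrac{\pi}{2\Im z}|m(z+1)+2n|^2\big)$ and $\mathcal{W}_1(z;\alpha)=\sum_{(m,n)\neq 0}\exp\!\big(-\alpha\tfrac{\pi}{\Im z}|mz+n|^2\big)$. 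Writing $z=x+iy$, the quadratic forms are $\tfrac{\pi\alpha}{y}|mz+n|^2=\tfrac{\pi\alpha}{y}\big((mx+n)^2+m^2y^2\big)$ and similarly for $\mathcal{W}_0$ after the substitution $w=\tfrac{z+1}{2}$; so each summand is a Gaussian in the lattice variables whose exponent depends smoothly on $(x,y)$, and termwise differentiation is justified by the rapid (super-exponential) decay of the terms and local uniformity in $z$ on compact subsets of $\mathbb H$.

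For the second bullet, the strategy is to invoke the monotonicity facts for $\mathcal{W}_1$ already established in \cite{LW}: namely that on the region $\Omega_{\mathcal{C}_1}=\{0<x<\tfrac12,\ y>\sqrt{x-x^2}\}$ one has $\partial_x \mathcal{W}_1(z;\alpha)<0$ for every $\alpha>0$. Granting that, \eqref{Bern} with $\mathcal{E}_1(z)=\int_0^\infty \mathcal{W}_1(z;\alpha)\,d\lambda(\alpha)$ and $d\lambda\ge 0$ gives $\partial_x\mathcal{E}_1(z)=\int_0^\infty \partial_x\mathcal{W}_1(z;\alpha)\,d\lambda(\alpha)<0$ on $\Omega_{\mathcal{C}_1}$, provided $\mathcal{F}$ is non-constant (if $\mathcal{F}$ is constant the functional is constant and the claim is vacuous, or one states it with $\le$; I would check how the paper handles this edge case and state it consistently). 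The interchange of $\partial_x$ and $\int d\lambda$ is again justified by dominated convergence, using that $|\partial_x\mathcal{W}_1(z;\alpha)|$ is bounded uniformly for $\alpha$ in bounded sets and integrable against the finite measure $\lambda$ near $\alpha=0$ (where $\mathcal{W}_1$ stays bounded) and at $\alpha=\infty$ (where it decays).

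For the first bullet the same template applies once one knows $\partial_x\mathcal{W}_0(z;\alpha)>0$ on $\mathcal{D}_{\mathcal{G}_2}=\{|z|>1,\ 0<x<1\}$. Here the cleanest route is to exploit the change of variables relating $\mathcal{W}_0$ to $\mathcal{W}_1$: since $\mathcal{E}_0(z)=\mathcal{E}_1\big(\tfrac{z+1}{2}\big)$ up to the reparametrization built into \eqref{F1}, differentiation gives $\partial_x\mathcal{E}_0(z)=\tfrac12(\partial_x\mathcal{E}_1)\big(\tfrac{z+1}{2}\big)$, so one wants to show that $w=\tfrac{z+1}{2}$ lands in a region where $\partial_x\mathcal{E}_1$ has a definite sign. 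The map $z\mapsto\tfrac{z+1}{2}$ sends $\mathcal{D}_{\mathcal{G}_2}$ into the strip $\tfrac12<\Re w<1$, where $\mathcal{E}_1$ (invariant under $\mathcal{G}_1$, in particular under $x\mapsto 1-x$) has the opposite monotonicity to that on $(0,\tfrac12)$; one must verify that the image lies inside the appropriate monotonicity region (a reflected copy of $\Omega_{\mathcal{C}_1}$), using $|z|>1$ to control $\Im w$ from below. I expect this verification — pinning down exactly which sub-region of $\mathbb H$ the half-shift map carries $\mathcal{D}_{\mathcal{G}_2}$ onto, and checking it sits where the \cite{LW} monotonicity is valid — to be the main obstacle; it is elementary but requires care with the geometry of the circle $|z|=1$ under $z\mapsto(z+1)/2$ and with the invariances in Lemma \ref{LG1}. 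The alternative, a direct computation of $\partial_x\mathcal{W}_0$ by differentiating the Gaussian sum and grouping terms to exhibit positivity, is also viable and I would fall back to it if the change-of-variables bookkeeping becomes unwieldy.
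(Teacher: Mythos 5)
Your proposal is correct, and for the second bullet it coincides with the paper's argument: the paper proves Theorem \ref{Lemma2} exactly by combining the Bernstein representation \eqref{Bern} (with $d\lambda\ge 0$, and nondegenerate since $\mathcal F$ is strictly completely monotone) with the sign of $\partial_x\mathcal{W}_1(z;\alpha)$ on $\Omega_{\mathcal{C}_1}$ imported from \cite{LW}. Where you genuinely deviate is the first bullet: the paper also imports the companion fact $\partial_x\mathcal{W}_0(z;\alpha)>0$ from \cite{LW} and integrates it, whereas you derive the $\mathcal{E}_0$ statement from the $\mathcal{E}_1$ statement via the identity $\mathcal{E}_0(z)=\mathcal{E}_1\big(\tfrac{z+1}{2}\big)$ together with the reflection $w\mapsto 1-\overline{w}\in\mathcal{G}_1$. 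The geometric verification you flagged as the main obstacle in fact closes cleanly: writing $w=\tfrac{z+1}{2}=u+iv$ with $u=\tfrac{x+1}{2}$, $v=\tfrac{y}{2}$, the condition $v>\sqrt{u-u^2}$ is equivalent to $y>\sqrt{1-x^2}$, i.e.\ $|z|>1$, so the half-shift maps $\mathcal{D}_{\mathcal{G}_2}$ exactly onto $\Omega_{\mathcal{C}_2}=\{\tfrac12<u<1,\ v>\sqrt{u-u^2}\}$, the mirror image of $\Omega_{\mathcal{C}_1}$, on which $\partial_u\mathcal{E}_1>0$ by the second bullet and $\mathcal{E}_1(1-\overline{w})=\mathcal{E}_1(w)$ (this is precisely Corollary \ref{Coro1} of the paper, which the paper deduces \emph{after} Theorem \ref{Lemma2} rather than using it as input). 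So your route shows the first bullet is a formal consequence of the second, which buys a smaller external input (only the $\mathcal{W}_1$ monotonicity from \cite{LW}) and makes the logical dependence transparent; the paper's route is a two-line citation but uses both monotonicity results of \cite{LW} as black boxes. Your handling of the constant-potential edge case and of differentiating under $\int d\lambda$ is consistent with the paper's conventions (the paper's strict definition of complete monotonicity rules out constants, so the strict inequalities survive the integration).
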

\begin{remark} Montgomery \cite{Mon1988} proved that
\begin{equation}\aligned\label{MonLLL}
\frac{\partial}{\partial x}\mathcal{E}_1(z)<0,\;\;\forall \;z\in \Omega_{\mathcal{C}_0}:=\{
z\in\mathbb{H}: y>\frac{1}{2},\; 0<x<\frac{1}{2}
\}.
\endaligned\end{equation}
Theorem \ref{Lemma2}  improves this result to a larger domain $\Omega_{\mathcal{C}_1}$ as
$\Omega_{\mathcal{C}_0} \subset \Omega_{\mathcal{C}_1}$. Furthermore, $\Omega_{\mathcal{C}_1}$ contains a corner at $z=0$, which makes the proof much more involved.
\end{remark}

 We state two corollaries related to the functional $\mathcal{E}_\kappa(z)$.

\begin{corollary}\label{Coro1}
$$
\frac{\partial}{\partial x}\mathcal{E}_1(z)>0,\;\forall z\in\Omega_{\mathcal{C}_2}.
$$
Here
\begin{equation}\aligned\nonumber
\Omega_{\mathcal{C}_2}:=\{z\mid \frac{1}{2}<x<1, y>\sqrt{x-x^2}
\}.
\endaligned\end{equation}
\end{corollary}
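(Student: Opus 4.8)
The plan is to deduce Corollary \ref{Coro1} from Theorem \ref{Lemma2} by exploiting a reflection symmetry of $\mathcal{E}_1$. The key observation is that the map $\sigma: z \mapsto 1-\bar z$ (equivalently $x+iy \mapsto (1-x)+iy$) is a reflection across the vertical line $\Re(z) = \tfrac12$, and it can be written as a composition of generators of $\mathcal{G}_1$: first $\tau \mapsto -\bar\tau$ sends $x+iy$ to $-x+iy$, then $\tau \mapsto \tau+1$ sends that to $(1-x)+iy$. Hence $\sigma \in \mathcal{G}_1$, and by Lemma \ref{LG1} (first bullet) we have $\mathcal{E}_1(\sigma(z)) = \mathcal{E}_1(z)$ for all $z \in \mathbb{H}$.

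First I would record this invariance identity: for $z = x+iy$,
\begin{equation}\nonumber
\mathcal{E}_1\big((1-x)+iy\big) = \mathcal{E}_1(x+iy).
\end{equation}
Differentiating both sides with respect to $x$ (legitimate since $\mathcal{E}_1$ is smooth in the interior, being a uniformly convergent sum of smooth terms after applying the Bernstein representation), the chain rule on the left introduces a factor $-1$ from $\partial(1-x)/\partial x$, so
\begin{equation}\nonumber
-\,\frac{\partial}{\partial x}\mathcal{E}_1\big((1-x)+iy\big) = \frac{\partial}{\partial x}\mathcal{E}_1(x+iy).
\end{equation}
Next I would check that $\sigma$ maps the region $\Omega_{\mathcal{C}_2}$ bijectively onto $\Omega_{\mathcal{C}_1}$: if $\tfrac12 < x < 1$ then $0 < 1-x < \tfrac12$, and the constraint $y > \sqrt{x - x^2}$ is symmetric under $x \mapsto 1-x$ since $x - x^2 = x(1-x) = (1-x)(1-(1-x))$. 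So for $z \in \Omega_{\mathcal{C}_2}$ we have $\sigma(z) \in \Omega_{\mathcal{C}_1}$, and Theorem \ref{Lemma2} gives $\frac{\partial}{\partial x}\mathcal{E}_1|_{\sigma(z)} < 0$; combined with the displayed identity above, $\frac{\partial}{\partial x}\mathcal{E}_1(z) = -\frac{\partial}{\partial x}\mathcal{E}_1(\sigma(z)) > 0$, which is the claim.

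I do not anticipate a serious obstacle here — this is essentially a bookkeeping argument. The only point requiring a little care is making sure the reflection used is genuinely in $\mathcal{G}_1$ (it is, as shown above via $-\bar\tau$ followed by $+1$) and that differentiation under the lattice sum is justified; the latter follows from the completely monotone/Bernstein structure already exploited for Theorem \ref{Lemma2}, so no new analytic input is needed. One should also note the sign flip carefully: the reflection reverses the orientation of the $x$-axis, which is exactly why the strict negativity of $\partial_x \mathcal{E}_1$ on $\Omega_{\mathcal{C}_1}$ turns into strict positivity on the mirror region $\Omega_{\mathcal{C}_2}$.
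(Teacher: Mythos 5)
Your argument is correct and is essentially identical to the paper's own proof: both use the reflection $z\mapsto 1-\overline{z}\in\mathcal{G}_1$ together with Lemma \ref{LG1} to get $\mathcal{E}_1(1-\overline z)=\mathcal{E}_1(z)$, differentiate to pick up the sign flip, and then invoke Theorem \ref{Lemma2} on the mirror region $\Omega_{\mathcal{C}_1}$. Your extra checks (that the reflection is generated by $\tau\mapsto-\overline\tau$ and $\tau\mapsto\tau+1$, and that it maps $\Omega_{\mathcal{C}_2}$ onto $\Omega_{\mathcal{C}_1}$) are just the details the paper leaves implicit.
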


\begin{proof} Since $z\mapsto1-\overline{z}\in\mathcal{G}_1$, by Lemma \ref{LG1}, we have $\mathcal{E}_1(1-\overline{z})=\mathcal{E}_1(z)$.
Thus
\begin{equation}\aligned\label{RR}
\frac{\partial}{\partial x}\mathcal{E}_1(1-\overline{z})=-\frac{\partial}{\partial x}\mathcal{E}_1(z).
\endaligned\end{equation}
The result follows by Theorem \ref{Lemma2}.

\end{proof}

By Theorem \ref{Lemma2} and Corollary \ref{Coro1} we have

\begin{theorem}\label{Coro2}  For $\kappa\in[0,1]$,
$$
\frac{\partial}{\partial x}\mathcal{E}_\kappa(z)>0,\;\forall z\in{\mathcal{R}_R}, j=1,2.
$$
Here
$$
{\mathcal{R}_R}:=\Omega_{\mathcal{C}_2}\cap\mathcal{D}_{\mathcal{G}_2}=\{z\mid \frac{1}{2}<\Re(z)<1, |z|>1\}.
$$

\end{theorem}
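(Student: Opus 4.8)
The plan is simply to combine the two monotonicity results already established. By \eqref{Ekappa} we have $\mathcal{E}_\kappa(z)=\kappa\,\mathcal{E}_1(z)+(1-\kappa)\,\mathcal{E}_0(z)$, so differentiating in $x=\Re(z)$ gives
\begin{equation}\nonumber
\frac{\partial}{\partial x}\mathcal{E}_\kappa(z)=\kappa\,\frac{\partial}{\partial x}\mathcal{E}_1(z)+(1-\kappa)\,\frac{\partial}{\partial x}\mathcal{E}_0(z),
\end{equation}
a convex combination with coefficients in $[0,1]$. Hence it suffices to show that \emph{both} $x$-derivatives are strictly positive on $\mathcal{R}_R=\{z:\tfrac12<\Re(z)<1,\ |z|>1\}$.

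First I would verify the set inclusions. Since $\mathcal{D}_{\mathcal{G}_2}=\{z:|z|>1,\ 0<x<1\}$ and $\tfrac12<x<1$ forces $0<x<1$, we have $\mathcal{R}_R\subseteq\mathcal{D}_{\mathcal{G}_2}$, so Theorem \ref{Lemma2} yields $\partial_x\mathcal{E}_0(z)>0$ on $\mathcal{R}_R$. Next I would check $\mathcal{R}_R\subseteq\Omega_{\mathcal{C}_2}$, where $\Omega_{\mathcal{C}_2}=\{z:\tfrac12<x<1,\ y>\sqrt{x-x^2}\}$: for $\tfrac12<x<1$ the condition $|z|^2=x^2+y^2>1$ gives $y^2>1-x^2=(1-x)(1+x)>(1-x)x=x-x^2$ because $1+x>x$, hence $y>\sqrt{x-x^2}$ and $z\in\Omega_{\mathcal{C}_2}$. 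Corollary \ref{Coro1} then gives $\partial_x\mathcal{E}_1(z)>0$ on $\mathcal{R}_R$.

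Combining, for every $\kappa\in[0,1]$ and every $z\in\mathcal{R}_R$ the right-hand side above is a nonnegative combination of two strictly positive quantities with at least one weight positive, so $\partial_x\mathcal{E}_\kappa(z)>0$, as claimed. There is essentially no obstacle beyond bookkeeping; the only point requiring (elementary) care is the geometric comparison on the strip $\tfrac12<x<1$ between the unit circle $|z|=1$ and the arc $y=\sqrt{x-x^2}$, which is precisely what places $\mathcal{R}_R$ inside both $\mathcal{D}_{\mathcal{G}_2}$ and $\Omega_{\mathcal{C}_2}$ so that Theorem \ref{Lemma2} and Corollary \ref{Coro1} can be applied simultaneously.
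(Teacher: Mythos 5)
Your proposal is correct and follows exactly the paper's route: the paper derives Theorem \ref{Coro2} directly from Theorem \ref{Lemma2} and Corollary \ref{Coro1}, writing $\mathcal{E}_\kappa=\kappa\mathcal{E}_1+(1-\kappa)\mathcal{E}_0$ and using positivity of both $x$-derivatives on $\mathcal{R}_R=\Omega_{\mathcal{C}_2}\cap\mathcal{D}_{\mathcal{G}_2}$. Your explicit verification that $\{z:\tfrac12<\Re(z)<1,\ |z|>1\}$ lies in $\Omega_{\mathcal{C}_2}$ is the same elementary bookkeeping the paper leaves implicit.
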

By Theorem \ref{Coro2}, it remains to consider functional on the domain
\begin{equation}
\label{RL}
{\mathcal{R}_L}:=\{z\mid 0<\Re(z)<\frac{1}{2}, |z|>1\}.
\end{equation}
We shall solve this problem in Section \ref{Theorem1}.

\vskip0.2in

Before going to the proof, we also need some auxiliary functions from number theory.   We first recall the following well-known Jacobi triple product formula:
\begin{equation}\aligned\label{Jacob1}
\prod_{m=1}^\infty(1-x^{2m})(1+x^{2m-1}y^2)(1+\frac{x^{2m-1}}{y^2})=\sum_{n=-\infty}^\infty x^{n^2} y^{2n}
 \endaligned\end{equation}
for complex numbers $x,y$ with $|x|<1$, $y\neq0$.

The Jacob theta function is defined as
\begin{equation}\aligned\nonumber
\vartheta_J(z;\tau):=\sum_{n=-\infty}^\infty e^{i\pi n^2 \tau+2\pi i n z},
 \endaligned\end{equation}
and the classical one-dimensional theta function  is given by
\begin{equation}\aligned\label{TXY}
\vartheta(X;Y):=\vartheta_J(Y;iX)=\sum_{n=-\infty}^\infty e^{-\pi n^2 X} e^{2n\pi i Y}.
 \endaligned\end{equation}
Hence by the Jacobi triple product formula \eqref{Jacob1}, it holds
\begin{equation}\aligned\label{Product}
\vartheta(X;Y)=\prod_{n=1}^\infty(1-e^{-2\pi n X})(1+e^{-2(2n-1)\pi X}+2e^{-(2n-1)\pi X}\cos(2\pi Y)).
 \endaligned\end{equation}
By Poisson Summation Formula, one has
\begin{equation}\aligned\label{PPP2}
\vartheta(X;Y)=X^{-\frac{1}{2}}\sum_{n\in \mathbb{Z}} e^{-\frac{\pi(n-Y)^2}{X}}.
\endaligned\end{equation}

Thus the functional $\mathcal{W}_1(z;\alpha)$ can be written in terms of one-dimensional theta function as follows:

\begin{equation}\aligned\label{PPP3}
\mathcal{W}_1(z;\alpha)&=\sum_{(m,n)\in\mathbb{Z}^2} e^{-\alpha\pi\frac{1}{y }|nz+m|^2}
=\sum_{n\in\mathbb{Z}}e^{-\alpha \pi y n^2}\sum_{m\in\mathbb{Z}} e^{-\frac{\alpha \pi (nx+m)^2}{y}}\\
&=\sqrt{\frac{y}{\alpha}}\sum_{n\in\mathbb{Z}}e^{-\alpha \pi y n^2}\vartheta(\frac{y}{\alpha};-nx)=\sqrt{\frac{y}{\alpha}}\sum_{n\in\mathbb{Z}}e^{-\alpha \pi y n^2}\vartheta(\frac{y}{s};nx)\\
&=2\sqrt{\frac{y}{\alpha}}\sum_{n=1}^\infty e^{-\alpha \pi y n^2}\vartheta(\frac{y}{\alpha};nx).
\endaligned\end{equation}

We state  the following  very important properties of the theta function whose proof will be proved in the appendix.
We use these properties in estimates of the proof of Theorem \ref{Th1}.

\begin{lemma}\label{LemmaT3}
Let $Q(X;Y)=-\frac{1}{ \sin (2\pi Y)} \frac{\partial }{\partial Y}\vartheta(X;Y) $. Then there holds the following lower and upper bounds of $Q(X;Y)$
$$
\underline{\vartheta} (X)\leq Q(X;Y)\leq \overline{\vartheta} (X), \;\hbox{for  any}\; Y,
$$
where
\begin{equation}\label{vartheta1}
\underline{\vartheta} (t)=
\begin{cases}
\frac{1}{\max_{\beta\in[0,k]}{\sin(2\pi \beta)}}t^{-\frac{3}{2}}(e^{-\frac{\pi k^2}{t}}(k-e^{-\frac{\pi(1-2k)}{t}}(1-k))\\
+\sum_{m=1}^\infty e^{-\frac{\pi (k+m)^2}{t}}(k+m-e^{-\frac{\pi(2m+1)(1-2k)}{t}}(m+1-k)))\; (0<t\leq k<\frac{1}{2}),\\
(1-\mu(t))4\pi e^{-\pi t}\;(t>0),
\end{cases}
\end{equation}
and
\begin{equation}
\label{vartheta2}
\overline{\vartheta} (t)=
\begin{cases}
t^{-\frac{3}{2}}\sum_{m=-\infty}^\infty e^{-\frac{\pi m^2}{t}}(1-\frac{2\pi m^2}{t})\; (0<t\leq k <\frac{\pi}{2}),\\
(1+\mu(t))4\pi e^{-\pi t}\;(t>0).
\end{cases}
\end{equation}
Here $k$ and $a$ are positive numbers and
\begin{equation}\label{mmmx}
\mu(t):=\sum_{n=2}^\infty n^2 e^{-\pi(n^2-1)t}.
\end{equation}
 (For $t$ small, we can use
$$
Q(t;k)=\frac{1}{\sin(2\pi k)}t^{-\frac{3}{2}}(e^{-\frac{\pi k^2}{t}}(k-e^{-\frac{\pi(1-2k)}{t}}(1-k))+\sum_{m=1}^\infty e^{-\frac{\pi (k+m)^2}{t}}(k+m-e^{-\frac{\pi(2m+1)(1-2k)}{t}}(m+1-k))).)
$$

\end{lemma}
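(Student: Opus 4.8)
\textbf{Proof strategy for Lemma \ref{LemmaT3}.}

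The plan is to start from the product representation \eqref{Product} of $\vartheta(X;Y)$ and differentiate in $Y$. Writing $\vartheta(X;Y)=\prod_{n=1}^\infty(1-e^{-2\pi nX})\prod_{n=1}^\infty\bigl(1+e^{-2(2n-1)\pi X}+2e^{-(2n-1)\pi X}\cos(2\pi Y)\bigr)$, logarithmic differentiation gives
\begin{equation}\nonumber
\frac{\partial}{\partial Y}\log\vartheta(X;Y)=\sum_{n=1}^\infty\frac{-4\pi e^{-(2n-1)\pi X}\sin(2\pi Y)}{1+e^{-2(2n-1)\pi X}+2e^{-(2n-1)\pi X}\cos(2\pi Y)},
\end{equation}
so that $Q(X;Y)=-\frac{1}{\sin(2\pi Y)}\partial_Y\vartheta = \vartheta(X;Y)\sum_{n=1}^\infty\frac{4\pi e^{-(2n-1)\pi X}}{1+e^{-2(2n-1)\pi X}+2e^{-(2n-1)\pi X}\cos(2\pi Y)}$. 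The factor of $\sin(2\pi Y)$ cancels cleanly, which is the whole point of the definition of $Q$. For the regime $t>0$ (i.e.\ $X$ not small), I would isolate the $n=1$ term, which contributes $4\pi e^{-\pi X}$ times a factor that is $1+O(e^{-2\pi X})$ uniformly in $Y$, and bound the tail $n\geq 2$ together with the product correction by the quantity $\mu(t)$ in \eqref{mmmx}; this yields the two-sided bound $(1\mp\mu(t))4\pi e^{-\pi t}$. The bookkeeping here is to check that every error term — the $\cos(2\pi Y)$ dependence in each denominator, the infinite product prefactor, and the $n\geq2$ sum — is dominated in absolute value by $\mu(t)\,4\pi e^{-\pi t}$; since $\mu(t)$ is an over-generous majorant this should go through, though it requires care that the signs are arranged so the perturbation is genuinely one-signed after summing.

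For the small-$t$ regime I would instead use the Poisson-summed form \eqref{PPP2}, $\vartheta(X;Y)=X^{-1/2}\sum_{n\in\mathbb{Z}}e^{-\pi(n-Y)^2/X}$, and differentiate term by term:
\begin{equation}\nonumber
\partial_Y\vartheta(X;Y)=X^{-3/2}\sum_{n\in\mathbb{Z}}2\pi(n-Y)e^{-\pi(n-Y)^2/X}.
\end{equation}
For the upper bound $\overline{\vartheta}(t)$ one wants $Q(X;Y)=-\frac{1}{\sin(2\pi Y)}\partial_Y\vartheta \leq \overline{\vartheta}(X)$; the stated expression $X^{-3/2}\sum_m e^{-\pi m^2/X}(1-2\pi m^2/X)$ is exactly $\partial_Y^2\vartheta$ evaluated at $Y=0$ divided by $(2\pi)$ — i.e.\ the value obtained by replacing $-\partial_Y\vartheta/\sin(2\pi Y)$ by its limit as $Y\to 0$ — so the real content is to show that $Y\mapsto -\partial_Y\vartheta(X;Y)/\sin(2\pi Y)$ attains its maximum over $Y$ at $Y=0$ when $X$ is small. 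I would prove this by a convexity/monotonicity argument on the rapidly convergent series: for $0<X\leq k$ the $n=0$ Gaussian term dominates overwhelmingly, and one estimates the ratio $\frac{-\partial_Y\vartheta}{\sin 2\pi Y}$ against its value at $0$ by comparing $\frac{(Y)e^{-\pi Y^2/X}}{\sin 2\pi Y}$-type factors, using that $\sin(2\pi Y)\geq$ a linear lower bound on the relevant range. The lower bound $\underline{\vartheta}(t)$ in the first branch of \eqref{vartheta1} is the trickier one: it looks like it is obtained by keeping only finitely many terms of the alternating-in-sign series $\sum_m (k+m)e^{-\pi(k+m)^2/X}$ after pairing consecutive terms $m$ and $-(m+1)$, which produce the factors $(k+m-e^{-\pi(2m+1)(1-2k)/X}(m+1-k))$, and then dividing by $\max_{\beta\in[0,k]}\sin(2\pi\beta)$ to handle the worst $Y$; one must check each paired term is positive and that truncation only decreases the sum.

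\textbf{Main obstacle.} I expect the hard part to be the small-$t$ branch, specifically establishing rigorously that $Y=0$ is the maximizer for the upper bound (and identifying the correct worst-case $Y$, namely the endpoint of $[0,k]$, for the lower bound) uniformly in the parameter range, since the function $-\partial_Y\vartheta/\sin(2\pi Y)$ has a removable singularity at each half-integer $Y$ and its behavior between these points is governed by a delicate balance between the exponentially small Gaussian tails and the vanishing of $\sin(2\pi Y)$. Getting clean, self-consistent inequalities that hold for all $Y$ at once — rather than just pointwise near $Y=0$ — is where the argument needs to be most careful, and it is presumably why the authors defer the full proof to the appendix.
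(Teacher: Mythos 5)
Your plan overlaps with the paper's toolkit (Jacobi triple product, logarithmic differentiation, Poisson summation, pairing of Gaussian terms), but it misses the one observation the paper's proof actually rests on, and the step you substitute for it would fail as stated. From the product representation the paper reads off not an error expansion but a \emph{qualitative} fact: writing $Q(X;Y)=4\pi\sum_{n\ge1}e^{-(2n-1)\pi X}\prod_{m\neq n}(1-e^{-2\pi mX})\bigl(1+e^{-2(2m-1)\pi X}+2e^{-(2m-1)\pi X}\cos(2\pi Y)\bigr)$, each factor is decreasing in $Y$ on $[0,\tfrac12]$, so $Q(X;\cdot)$ is even, $1$-periodic and monotone decreasing on $[0,\tfrac12]$ for \emph{every} $X>0$. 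Hence its extreme values in $Y$ are the limits at $Y=0$ and $Y=\tfrac12$, which by L'Hospital equal $\mp\frac{1}{2\pi}\partial_Y^2\vartheta(X;Y)$ at those points; computing these second derivatives from the Fourier series \eqref{TXY} gives exactly $(1+\mu(t))4\pi e^{-\pi t}$ and $4\pi\sum_{n\ge1}(-1)^{n-1}n^2e^{-\pi n^2t}\ge(1-\mu(t))4\pi e^{-\pi t}$, and computing the same $Y=0$ limit from the Poisson-summed form gives the small-$t$ branch of $\overline{\vartheta}$. In particular the "main obstacle" you flag — proving that $Y=0$ maximizes $-\partial_Y\vartheta/\sin(2\pi Y)$ for small $X$ — is already settled, for all $X$, by this monotonicity; no delicate small-$X$ analysis of the Gaussian series is needed for the upper bound.

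The step in your general-$t$ argument that does not survive scrutiny is the claim that each error term (the $\cos(2\pi Y)$ dependence of the denominators, the infinite-product prefactor, the $n\ge2$ sum) is dominated in absolute value by $\mu(t)\,4\pi e^{-\pi t}$. At $Y=0$ one has the exact identity $Q(t;0)=4\pi\sum_{n\ge1}n^2e^{-\pi n^2t}=(1+\mu(t))4\pi e^{-\pi t}$, so the upper bound is attained with zero slack, while the individual corrections you list are each of relative size $O(e^{-\pi t})$ or $O(e^{-2\pi t})$ (e.g.\ the prefactor $\prod_m(1-e^{-2\pi mt})$ deviates from $1$ by $\sim e^{-2\pi t}$, and the $n\ge2$ terms contribute $\sim e^{-2\pi t}$ relative), which is far larger than $\mu(t)=O(e^{-3\pi t})$ for $t$ large; these pieces cancel only in combination. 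So an absolute-value, term-by-term bookkeeping cannot produce the stated constants — one needs exactly the cancellation structure that the paper's route (monotonicity in $Y$ plus evaluation at $Y=0,\tfrac12$) packages automatically. Your sketch of the small-$t$ lower bound (Poisson summation, pairing $m$ with $-(m+1)$ to get the factors $k+m-e^{-\pi(2m+1)(1-2k)/t}(m+1-k)$, worst-case $Y$ handled via $\max_{\beta\in[0,k]}\sin(2\pi\beta)$) is essentially the paper's argument for that branch, where the key input is the monotonicity in $Y$ of $e^{-\pi(2m+1)(1-2Y)/t}(m+1-Y)$; that part of your proposal is sound.
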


\section{Minimum principle: Proof of Theorem \ref{Th1}}\label{Theorem1}
\setcounter{equation}{0}

In this section, we prove Theorem \ref{Th1}. First   we observe that the functional $ \mathcal{W}_\kappa (z;\alpha)$ satisfies  the following duality principle: For all $b \in \mathbb{R}$,
\[ \mathcal{W}_b(w) = \mathcal{W}_{1-b} (z), \ \  z \in \mathbb{H} \ \ \mbox{and} \ \
 w=\frac{z-1}{z+1} \in \mathbb{H}. \]

 By the duality principle(see more details in Lemma \ref{l-dual}) and  the Bernstein representation formula (\ref{Bern}),
we only need to consider the case $ \kappa \in [0, \frac{1}{2}]$.

Next by the transition formula
 \begin{equation}\label{W12}
 \aligned
\mathcal{W}_{\kappa}(z;\alpha)=\kappa \mathcal{W}_{\frac{1}{2}}(z;\alpha)+(1-2\kappa)\mathcal{W}_{0}(z;\alpha)
 \endaligned\end{equation}
 we only need to study $ \mathcal{W}_{\frac{1}{2}} $ and $ \mathcal{W}_{0} $.  Furthermore since by Fourier transform
\begin{equation}\aligned\label{W124}
\mathcal{W}_{\kappa}(z;\alpha)=\alpha\mathcal{W}_{\kappa}(z;\frac{1}{\alpha})
 \endaligned\end{equation}
  we only need to consider $ \alpha \in [1, +\infty)$.  By Theorem \ref{Coro2}  we only need to consider $ z \in {\mathcal{R}_L}$ (defined at   (\ref{RL})). We also use the formula in \eqref{PPP3},
  \begin{equation}\aligned\nonumber
\mathcal{W}_1(z;\alpha)
=2\sqrt{\frac{y}{\alpha}}\sum_{n=1}^\infty e^{-\alpha \pi y n^2}\vartheta(\frac{y}{\alpha};nx).
\endaligned\end{equation}
Then we use the bounds in Lemma \ref{LemmaT3} to prove the estimates.

The following is the main result we shall prove in this section:

\begin{theorem}\label{LemmaT1} On the half fundamental domain bounded by $x=0,x=\frac{1}{2}$ and $x^2+y^2=1$, we have
 \begin{equation}\aligned\label{W123}
\frac{\partial}{\partial x}\mathcal{W}_{\frac{1}{2}}(z;\alpha)\geq0, \forall z\in \overline{ \mathcal{R}_{L}}
 \endaligned\end{equation}
where $\mathcal{R}_L$ is defined at (\ref{RL}). If the equality holds, then $x=0$.
\end{theorem}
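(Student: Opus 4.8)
The plan is to reduce the claimed positivity of $\partial_x \mathcal{W}_{\frac{1}{2}}(z;\alpha)$ on $\overline{\mathcal{R}_L}$ to a pointwise inequality between two rapidly convergent theta-type series, which is then established using the bounds $\underline\vartheta,\overline\vartheta$ from Lemma \ref{LemmaT3}. Recall from \eqref{W12} that $\mathcal{W}_{\frac12}(z;\alpha)=2\mathcal{W}_\kappa - \text{(lower order)}$ is really the symmetric combination $\tfrac12\big(\mathcal{W}_1(z;\alpha)+\mathcal{W}_0(z;\alpha)\big)$; more precisely, from the definition \eqref{Wzalphadef} one has $\mathcal{W}_{\frac12}(z;\alpha)=\tfrac12\mathcal{W}_0(z;\alpha)+\tfrac12\mathcal{W}_1(z;\alpha)$, where $\mathcal{W}_1$ is the Gaussian lattice sum over $\Lambda$ and $\mathcal{W}_0$ is the shifted sum over $\Lambda+\tfrac12$. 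So the first step is to write $\partial_x\mathcal{W}_{\frac12} = \tfrac12\partial_x\mathcal{W}_1 + \tfrac12\partial_x\mathcal{W}_0$ and to express both pieces via the one-dimensional theta function: using \eqref{PPP3}, $\mathcal{W}_1(z;\alpha)=2\sqrt{y/\alpha}\sum_{n\geq1}e^{-\alpha\pi y n^2}\vartheta(y/\alpha;nx)$, and an analogous identity (obtained by the same Poisson-summation manipulation applied to the half-integer-shifted sum) for $\mathcal{W}_0$. Differentiating in $x$ pulls out $\partial_Y\vartheta$ at $Y=nx$ with a factor $n$, and by definition $\partial_Y\vartheta(X;Y)=-\sin(2\pi Y)\,Q(X;Y)$.

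The second step is the key algebraic maneuver: after differentiating, $\partial_x\mathcal{W}_1(z;\alpha)$ becomes a sum $-2\sqrt{y/\alpha}\sum_{n\geq1}n e^{-\alpha\pi y n^2}\sin(2\pi nx)\,Q(y/\alpha;nx)$, and similarly for $\partial_x\mathcal{W}_0$ but with $\sin(\pi(2n-1)x)$-type factors coming from the half-shift. The plan is to pair the $n$-th term of $\partial_x\mathcal{W}_1$ with a suitable term of $\partial_x\mathcal{W}_0$ so that the trigonometric factors combine; on $\overline{\mathcal{R}_L}$ we have $0\le x\le \tfrac12$, so $\sin(2\pi nx)$ may be negative (for $n\ge2$) while the leading $n=1$ term has $\sin(2\pi x)\ge 0$. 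The strategy is therefore: isolate the $n=1$ contribution, which is manifestly the dominant positive term since $\sin(2\pi x)\ge0$ and $Q>0$; and bound the tail $\sum_{n\ge2}$ in absolute value using $Q(X;Y)\le \overline\vartheta(X)$ together with $|\sin(2\pi nx)|\le 1$ (or, sharper, $|\sin(2\pi nx)|\le n\sin(2\pi x)$ for $0\le x\le \tfrac1{2n}$, extended appropriately), while bounding the $n=1$ term from below using $Q(X;Y)\ge\underline\vartheta(X)$. This produces an inequality of the shape
\begin{equation}\nonumber
\partial_x\mathcal{W}_{\tfrac12}(z;\alpha)\ \geq\ \sqrt{\tfrac{y}{\alpha}}\,\sin(2\pi x)\Big(c_1 e^{-\alpha\pi y}\,\underline\vartheta(\tfrac{y}{\alpha}) - \sum_{n\ge2} c_n e^{-\alpha\pi y n^2}\,\overline\vartheta(\tfrac{y}{\alpha})\Big),
\end{equation}
and since we have reduced to $\alpha\ge1$ and (via Theorem \ref{Coro2}) to $|z|>1$, i.e. $y\ge\sqrt{1-x^2}\ge\sqrt3/2$ on the relevant part, the Gaussian weights $e^{-\alpha\pi y n^2}$ decay geometrically fast enough that the $n=1$ term dominates. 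One must also handle the corner behavior as $z\to0$ along $|z|=1$ (where $y\to0$), which is exactly why Lemma \ref{LemmaT3} provides the two regimes for $\underline\vartheta,\overline\vartheta$ (small $t$ versus all $t$): near the corner one uses the $t^{-3/2}$-type expansions, away from it the $e^{-\pi t}$ bounds.

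The third step is to verify the resulting scalar inequality $c_1 e^{-\alpha\pi y}\underline\vartheta(y/\alpha)\ge \big(\sum_{n\ge2}c_n e^{-\alpha\pi y n^2}\big)\overline\vartheta(y/\alpha)$ uniformly in $(y,\alpha)$ with $\alpha\ge1$ and $y$ in the admissible range, and to check that equality forces $\sin(2\pi x)=0$, hence $x=0$ (the other root $x=\tfrac12$ being excluded since there the tail estimate is strict, or handled by the duality/reflection symmetry at $x=\tfrac12$). I expect the main obstacle to be precisely this uniform tail estimate near the corner $z=0$: the substitution $t=y/\alpha$ can be small even when $y$ is not, so one genuinely needs the small-$t$ asymptotics of $\underline\vartheta$ and $\overline\vartheta$, and controlling the ratio $\overline\vartheta(t)/\underline\vartheta(t)$ as $t\to0^+$ against the exponentially small Gaussian gap $e^{-\alpha\pi y(n^2-1)}$ requires care — this is the "corner at $z=0$" difficulty flagged in the remark after Theorem \ref{Lemma2}. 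The remaining regime $y/\alpha$ bounded below is comparatively routine: there $\underline\vartheta,\overline\vartheta$ are both comparable to $4\pi e^{-\pi y/\alpha}$ up to the explicit correction $\mu$, and the inequality reduces to a one-variable elementary estimate.
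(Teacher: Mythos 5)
Your overall scaffolding matches the paper's (reduce to $\alpha\geq 1$ by \eqref{W124}, restrict to $\overline{\mathcal{R}_L}$ by Theorem \ref{Coro2}, expand via \eqref{PPP3}, bound $Q$ by $\underline\vartheta,\overline\vartheta$ from Lemma \ref{LemmaT3}, and split into a small-$t$ and a large-$t$ regime), but the key reduction is set up with the wrong sign structure, and this is a genuine gap. In your sketch the dominant positive term is taken to be the $n=1$ term of $\partial_x\mathcal{W}_1$, with lower bound $c_1 e^{-\alpha\pi y}\underline\vartheta(\tfrac{y}{\alpha})\sin(2\pi x)$, and the only terms you propose to dominate are the $n\geq 2$ tail of that same series. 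But since $\partial_Y\vartheta(X;Y)=-\sin(2\pi Y)Q(X;Y)$ with $Q>0$, the $n=1$ term of $\partial_x\mathcal{W}_1$ equals $-2\sqrt{y/\alpha}\,e^{-\alpha\pi y}\sin(2\pi x)\,Q(\tfrac{y}{\alpha};x)\leq 0$ on $0\leq x\leq\tfrac12$; indeed $\partial_x\mathcal{W}_1<0$ on this region (Montgomery's monotonicity, extended in Theorem \ref{Lemma2}), so your proposed main inequality would essentially assert $\partial_x\mathcal{W}_1\geq0$, which is false, and no tail estimate can repair it. The whole point of Theorem \ref{LemmaT1} is that positivity comes from the shifted sum $\partial_x\mathcal{W}_0$: its theta expansion is evaluated at $Y=n\tfrac{x+1}{2}$, so the trigonometric factor is $\sin(\pi n(x+1))=(-1)^n\sin(\pi n x)$, and the odd-$n$ (in particular $n=1$) terms flip sign and contribute $+\sin(\pi x)\,Q(\tfrac{y}{2\alpha};\tfrac{x+1}{2})\geq \sin(\pi x)\,\underline\vartheta(\tfrac{y}{2\alpha})$ with Gaussian weight $e^{-\pi\alpha y/2}$. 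Your sketch misses this sign flip (you write generic "$\sin(\pi(2n-1)x)$-type factors") and also erases the distinct parameters of the shifted series: its theta argument is $\tfrac{y}{2\alpha}$, not $\tfrac{y}{\alpha}$, and its weights are $e^{-\pi\alpha yn^2/2}$. This is exactly how the paper proceeds in \eqref{J200}--\eqref{J124} and \eqref{IJ2}--\eqref{PPP1}: keep $\frac{1}{2\sqrt2}\underline\vartheta(\tfrac{y}{2\alpha})\sin(\pi x)$ as the positive leading term and dominate the $n=1$ term of the unshifted series ($e^{-\pi\alpha y/2}\overline\vartheta(\tfrac{y}{\alpha})\sin(2\pi x)$), the $n=2$ shifted term, and small tails $\epsilon_j(\alpha)$, with separate treatments for $x\in[\tfrac14,\tfrac12]$ and $x\in[0,\tfrac14]$.

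Two further corrections. First, there is no corner issue at $z=0$ here: on $\overline{\mathcal{R}_L}$ one has $|z|\geq1$, hence $y\geq\sqrt{1-x^2}\geq\tfrac{\sqrt3}{2}$ (and $\geq\tfrac{\sqrt{15}}{4}$ when $x\leq\tfrac14$); the small-$t$ regime of Lemma \ref{LemmaT3} is needed only because $t=\tfrac{y}{\alpha}$ or $\tfrac{y}{2\alpha}$ can be small when $\alpha$ is large, which the paper handles by the three-case split in $k$ (cases A, B, C of Lemmas \ref{HHHA}--\ref{HHHB}); the corner at $z=0$ concerns Theorem \ref{Lemma2}, not this statement. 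Second, with the correct leading term the overall factor is $\sin(\pi x)$, not $\sin(2\pi x)$, so equality forces $x=0$ directly, and no extra argument at $x=\tfrac12$ is needed.
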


By Corollary 4.2 of \cite{LW}, we also have $\frac{\partial}{\partial x}\mathcal{W}_{0}(z;\alpha)\geq0, \forall z\in \overline{ \mathcal{R}_{L}}$. Theorem \ref{Th1} then follows from
 (\ref{W123}) and Theorem \ref{Coro2}.

The proof of Theorem \ref{LemmaT1} follows from  the following two lemmas which establish the lower bound of $\frac{\partial}{\partial x}\mathcal{W}_{\frac{1}{2}}(z;\alpha)$ in  different ranges of $x$.

\begin{lemma}\label{HHHA}  For $x \in [\frac{1}{4},\frac{1}{2}]$ we have the following  lower bound estimates for $\frac{\partial}{\partial x}\mathcal{W}_{\frac{1}{2}}(z;\alpha)$:
  \begin{equation}\aligned\label{HHHAA}
&\frac{\partial}{\partial x}\mathcal{W}_{\frac{1}{2}}(z;\alpha)\\
\geq&\sqrt{\frac{y}{\alpha}}e^{-\frac{\pi\alpha y}{2}}\sin(\pi x)
\Big(
\frac{1}{2\sqrt2}\underline{\vartheta} (\frac{y}{2\alpha})-\sqrt2(1+\epsilon_1(\alpha))e^{-\frac{\pi \alpha y}{2}}\overline{\vartheta} (\frac{y}{\alpha})
 -(1+\epsilon_2(\alpha))e^{-\frac{3\pi \alpha y}{2}}\overline{\vartheta} (\frac{y}{2\alpha})
\Big),
 \endaligned\end{equation}
 where $\epsilon_1(\alpha) <3.18\cdot 10^{-9},\;\epsilon_2(\alpha) < 2.782\cdot10^{-6}$ and $ \overline{\vartheta}, \underline{\vartheta}$ are defined in (\ref{vartheta1})-(\ref{vartheta2}).

 Furthermore, for $ y \geq \frac{ \sqrt{3}}{2}, \alpha \geq 1$, there holds
 \begin{equation}\aligned\label{HardAA}
\frac{1}{2\sqrt2}\underline\vartheta(\frac{y}{2\alpha})-\sqrt2(1+\epsilon_1(\alpha))e^{-\frac{\pi \alpha y}{2}}\overline\vartheta(\frac{y}{\alpha})
 -(1+\epsilon_2(\alpha))e^{-\frac{3\pi \alpha y}{2}}\overline\vartheta(\frac{y}{2\alpha})\geq0.
\endaligned\end{equation}
\end{lemma}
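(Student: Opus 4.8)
The plan is to recast $\partial_x\mathcal{W}_{\frac12}$ in one-dimensional theta form, isolate the single positive term, and dominate everything else by the uniform bounds of Lemma \ref{LemmaT3}; the inequality \eqref{HardAA} then becomes a two-regime estimate in $\alpha y$ and $y/\alpha$. First I would use $m(z+1)+2n=2\big(m\tfrac{z+1}{2}+n\big)$, which gives $\mathcal{W}_0(z;\alpha)=\mathcal{W}_1\big(\tfrac{z+1}{2};\alpha\big)$, and apply \eqref{PPP3} to $\mathcal{W}_1$ both at $z$ and at $w=\tfrac{z+1}{2}$ (where $\Im w=\tfrac y2$, $\Re w=\tfrac{x+1}{2}$). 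Differentiating in $x$ and using the defining relation $\partial_Y\vartheta(X;Y)=-\sin(2\pi Y)\,Q(X;Y)$ from Lemma \ref{LemmaT3}, together with $\sin(\pi n(x+1))=(-1)^n\sin(\pi nx)$ and the period-one dependence of $Q$ on $Y$, one gets
\[
\partial_x\mathcal{W}_1(z;\alpha)=-2\sqrt{\tfrac y\alpha}\sum_{n\ge1}n\,e^{-\alpha\pi y n^2}\sin(2\pi nx)\,Q\big(\tfrac y\alpha;nx\big),
\]
\[
\partial_x\mathcal{W}_0(z;\alpha)=\sqrt{\tfrac{y}{2\alpha}}\sum_{n\ge1}n(-1)^{n+1}e^{-\alpha\pi y n^2/2}\sin(\pi nx)\,Q\big(\tfrac{y}{2\alpha};\tfrac{n(x+1)}{2}\big),
\]
and $\partial_x\mathcal{W}_{\frac12}=\tfrac12(\partial_x\mathcal{W}_1+\partial_x\mathcal{W}_0)$. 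On $(0,\tfrac12)$ the only term with a fixed favourable sign is the $n=1$ term of $\partial_x\mathcal{W}_0$, namely $\sqrt{\tfrac{y}{2\alpha}}e^{-\alpha\pi y/2}\sin(\pi x)\,Q\big(\tfrac{y}{2\alpha};\tfrac{x+1}{2}\big)>0$; the two dominant competitors are the $n=1$ term of $\partial_x\mathcal{W}_1$ (with Gaussian factor $e^{-\alpha\pi y}$) and the $n=2$ term of $\partial_x\mathcal{W}_0$ (factor $e^{-2\alpha\pi y}$).

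To obtain \eqref{HHHAA} I would then use, for $x\in[\tfrac14,\tfrac12]$, the elementary facts $\sin(\pi x)\ge\tfrac1{\sqrt2}$ and $\cos(\pi x)\le\tfrac1{\sqrt2}$: these give $\sin(2\pi x)\le\sqrt2\sin(\pi x)$ and, more crudely, $|\sin(k\pi x)|\le1\le\sqrt2\sin(\pi x)$ for every integer $k$, so that the common factor $\sqrt{\tfrac y\alpha}\,e^{-\alpha\pi y/2}\sin(\pi x)$ can be pulled out of every term. Bounding $Q$ from below by $\underline\vartheta$ in the positive $n=1$ term of $\partial_x\mathcal{W}_0$ and from above by $\overline\vartheta$ everywhere else (Lemma \ref{LemmaT3}), and summing the residual tails $\sum_{n\ge2}n\,e^{-\alpha\pi y n^2}(\cdots)$ as geometric-type series -- here $z\in\overline{\mathcal{R}_L}$ forces $y\ge\tfrac{\sqrt3}{2}$ and $\alpha\ge1$, so $\alpha y\ge\tfrac{\sqrt3}{2}$ and these tails are extremely small -- one absorbs the leftovers into explicit constants $\epsilon_1,\epsilon_2$ multiplying $e^{-\alpha\pi y/2}\overline\vartheta(\tfrac y\alpha)$ and $e^{-3\alpha\pi y/2}\overline\vartheta(\tfrac{y}{2\alpha})$ respectively; regrouping produces exactly \eqref{HHHAA}.

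For \eqref{HardAA} I would set $p:=\tfrac y\alpha$, $q:=\alpha y$, which satisfy $q\ge p>0$, $q\ge\tfrac{\sqrt3}{2}$ and $pq=y^2\ge\tfrac34$ (hence $q\ge\tfrac{3}{4p}$ when $p\le\tfrac{\sqrt3}{2}$, and $q\ge p\ge\tfrac{\sqrt3}{2}$ otherwise). Since the $\underline\vartheta$-term of the bracket only increases and the two competitors only decrease as $y$ grows, the extremal configuration is $y=\tfrac{\sqrt3}{2}$, i.e. $p=q=\tfrac{\sqrt3}{2}$. I would then split on the size of $p$. For $p$ below a fixed threshold, invoke the Poisson-summed ($t^{-3/2}$) lower bound for $\underline\vartheta(p/2)$ from \eqref{vartheta1} with a convenient value of its free parameter $k$ (for instance $k=\tfrac14$, so $2\pi k^2=\tfrac{\pi}{8}<\tfrac{3\pi}{8}$): then $\underline\vartheta(p/2)$ decays like $(p/2)^{-3/2}e^{-2\pi k^2/p}$, strictly slower than the competitors' $e^{-\pi q/2}\le e^{-3\pi/(8p)}$, so \eqref{HardAA} holds with an enormous margin. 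For $p$ above that threshold, use $\underline\vartheta(t)\ge(1-\mu(t))4\pi e^{-\pi t}$ and $\overline\vartheta(t)\le(1+\mu(t))4\pi e^{-\pi t}$ with $\mu$ as in \eqref{mmmx}; dividing by $4\pi e^{-\pi p/2}$ reduces \eqref{HardAA} to
\[
\tfrac1{2\sqrt2}\big(1-\mu(\tfrac p2)\big)\ \ge\ \sqrt2(1+\epsilon_1)\big(1+\mu(p)\big)e^{-\pi(p+q)/2}+(1+\epsilon_2)\big(1+\mu(\tfrac p2)\big)e^{-3\pi q/2},
\]
and, using $q\ge p$, $q\ge\tfrac3{4p}$, $q\ge\tfrac{\sqrt3}{2}$, monotonicity of the exponentials, and explicit (small) numerical bounds for $\mu$, this collapses to a finite numerical check whose worst case $p=q=\tfrac{\sqrt3}{2}$ holds with room to spare.

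The main obstacle is \eqref{HardAA}: it is a sharp, near-the-corner transcendental inequality resting entirely on the two-sided theta bounds of Lemma \ref{LemmaT3} (itself proved only in the appendix), and a clean argument has to pair the three active constraints $q\ge p$, $q\ge\tfrac{\sqrt3}{2}$, $pq\ge\tfrac34$ correctly with the two distinct effective regimes for $\underline\vartheta,\overline\vartheta$ -- the $t^{-3/2}$ expansion for small $p$ (with $k$ tuned so that $2\pi k^2<\tfrac{3\pi}{8}$) and the $4\pi e^{-\pi t}(1\pm\mu)$ bounds otherwise -- with overlapping thresholds. A secondary technical point, and the reason the range $x\ge\tfrac14$ must be isolated, is the sign and period bookkeeping of the first step: only when $\sin(\pi x)\ge\tfrac1{\sqrt2}$ can all the sign-indefinite terms be subsumed into the single common factor $\sin(\pi x)$ without sacrificing the lone positive term.
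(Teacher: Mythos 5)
Your treatment of \eqref{HHHAA} is essentially the paper's own argument: the same one--dimensional theta expansion (the paper's \eqref{J200}--\eqref{Ialpha1}), the same identification of the $n=1$ term of $\partial_x\mathcal{W}_0$ as the only term of fixed favourable sign, the same use of $\cos(\pi x)\le\tfrac{1}{\sqrt2}$ on $[\tfrac14,\tfrac12]$ to pull out the factor $\sin(\pi x)$, and the same absorption of the $n\ge 2$ tails into tiny constants $\epsilon_1,\epsilon_2$ using $y\ge\tfrac{\sqrt3}{2}$, $\alpha\ge1$. That half is fine.

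The gap is in your plan for \eqref{HardAA}. The reduction ``the $\underline\vartheta$-term only increases and the competitors only decrease as $y$ grows, so the extremal configuration is $y=\tfrac{\sqrt3}{2}$, i.e.\ $p=q=\tfrac{\sqrt3}{2}$'' is wrong twice over: first, $\underline\vartheta(\tfrac{y}{2\alpha})$ is not increasing in $y$ for fixed $\alpha$ (on the branch $(1-\mu(t))4\pi e^{-\pi t}$ it decays like $e^{-\pi y/(2\alpha)}$, so the term tends to $0$ as $y\to\infty$); second, even granting monotonicity at fixed $\alpha$, sending $y$ to $\tfrac{\sqrt3}{2}$ leaves $\alpha\ge1$ free, so you would only reduce to the curve $pq=\tfrac34$, $q\ge\tfrac{\sqrt3}{2}$, never to the single point $p=q=\tfrac{\sqrt3}{2}$. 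This is not a cosmetic slip, because the genuinely tight region is not $p=q=\tfrac{\sqrt3}{2}$ (there the inequality holds with a large margin) but the mixed window $p=\tfrac{y}{\alpha}\in(\tfrac14,\tfrac12]$ with $q$ near its minimum $\tfrac{3}{4p}$: there the $t^{-3/2}$ lower bound for $\underline\vartheta(\tfrac p2)$ with $k=\tfrac14$ and the $(1+\mu)$ upper bound for $\overline\vartheta(p)$ must be used simultaneously, and with your constants the margin essentially vanishes near $p\approx0.45$, while the alternative lower-bound branch $(1-\mu(\tfrac p2))4\pi e^{-\pi p/2}$ only becomes useful once $\mu(\tfrac p2)$ drops appreciably below $1$, i.e.\ $p\gtrsim0.44$. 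This is exactly why the paper runs three cases (both arguments small, both large, and the mixed Case $\mathbf{C}$), tunes $k$ in the overlap $(k_g,k_f]$, and in the mixed case reduces to a one-variable function $h(\gamma)$, $\gamma=\tfrac{y}{\alpha}\in(\tfrac14,\tfrac12]$, verified by monotonicity plus an endpoint evaluation. Your two-regime split can be made to work -- indeed your constraint $q\ge\tfrac{3}{4p}$ is the right extra ingredient in the mixed window, playing the role of the paper's bound $\alpha\ge\tfrac{y}{2k}$ -- but you must delete the one-point ``worst case'' claim and instead verify the bracket inequality along the whole worst-case curve $q=\max\{p,\tfrac{3}{4p},\tfrac{\sqrt3}{2}\}$ in each regime, with the threshold in $p$ chosen explicitly so that the two regimes genuinely overlap.
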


\begin{lemma}\label{HHHB} For $ x\in [0, \frac{1}{4}]$ we have the following lower bound estimates for $\frac{\partial}{\partial x}\mathcal{W}_{\frac{1}{2}}(z;\alpha)$:
\begin{equation}\aligned\label{HHHBB}
&\frac{\partial}{\partial x}\mathcal{W}_{\frac{1}{2}}(z;\alpha)\\
\geq&\sqrt{\frac{y}{\alpha}}e^{-\frac{\pi\alpha y}{2}}\sin(\pi x)
\Big(
\frac{1}{2\sqrt2}\underline\vartheta(\frac{y}{2\alpha})
-2(
1+\epsilon_3 (\alpha))e^{-\frac{\pi\alpha y}{2}}\overline\vartheta(\frac{y}{\alpha})
-{\sqrt2}(
1+\epsilon_4 (\alpha)
)e^{-\frac{3\pi\alpha y}{2}}\overline\vartheta(\frac{y}{2\alpha})
\Big)
 \endaligned\end{equation}
where $\epsilon_3 (\alpha) <1.1\cdot 10^{-4}$ and $\epsilon_4 (\alpha) <5.7 \cdot 10^{-16}.$

Furthermore, for $y\geq\frac{\sqrt{15}}{4}, \alpha\geq1$, we have
\begin{equation}\aligned\label{HardBB}
\frac{1}{2\sqrt2}\underline\vartheta(\frac{y}{2\alpha})
-2(
1+\epsilon_3 (\alpha))e^{-\frac{\pi\alpha y}{2}}\overline\vartheta(\frac{y}{\alpha})
-{\sqrt2}(
1+\epsilon_4 (\alpha)
)e^{-\frac{3\pi\alpha y}{2}}\overline\vartheta(\frac{y}{2\alpha})
>0.
 \endaligned\end{equation}

\end{lemma}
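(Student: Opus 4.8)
The plan is to reduce everything to the one–dimensional theta bounds of Lemma \ref{LemmaT3}. Writing $w=\frac{z+1}{2}$ one has $\mathcal{W}_0(z;\alpha)=\mathcal{W}_1(w;\alpha)$ (both sides are $\sum_{(m,n)\neq0}\exp(-\alpha\frac{\pi}{\Im w}|mw+n|^2)$, since $\frac{\pi}{2y}|m(z+1)+2n|^2=\frac{\pi}{\Im w}|mw+n|^2$), hence $\mathcal{W}_{\frac12}(z;\alpha)=\frac12\mathcal{W}_1(z;\alpha)+\frac12\mathcal{W}_1\!\big(\tfrac{z+1}{2};\alpha\big)$. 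Applying \eqref{PPP3} to both terms and differentiating in $x$, using $\partial_Y\vartheta(X;Y)=-\sin(2\pi Y)Q(X;Y)$ and $\sin\big(\pi n(x+1)\big)=(-1)^n\sin(\pi nx)$, I get
\[
\frac{\partial}{\partial x}\mathcal{W}_{\frac12}(z;\alpha)
=-\sqrt{\tfrac{y}{\alpha}}\sum_{n\ge1}n\,e^{-\alpha\pi y n^2}\sin(2\pi nx)\,Q\!\big(\tfrac{y}{\alpha};nx\big)
-\tfrac12\sqrt{\tfrac{y}{2\alpha}}\sum_{n\ge1}n(-1)^{n}e^{-\frac{\alpha\pi y}{2}n^2}\sin(\pi nx)\,Q\!\big(\tfrac{y}{2\alpha};n\tfrac{x+1}{2}\big).
\]
The decisive point is the factor $(-1)^n$ in the second sum: its $n=1$ term is \emph{positive}, of size $e^{-\alpha\pi y/2}$, while every other term carries $e^{-\alpha\pi y}$, $e^{-3\alpha\pi y/2}$ or less.

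Next I would pull out the common factor $\sqrt{y/\alpha}\,e^{-\alpha\pi y/2}\sin(\pi x)$, which is positive on $(0,\tfrac14]$ and vanishes at $x=0$ together with the left-hand side (using $\sqrt{y/2\alpha}=\tfrac1{\sqrt2}\sqrt{y/\alpha}$). The isolated $n=1$ term of the second sum then contributes $\frac{1}{2\sqrt2}Q\big(\tfrac{y}{2\alpha};\tfrac{x+1}{2}\big)\ge\frac1{2\sqrt2}\underline\vartheta\big(\tfrac{y}{2\alpha}\big)$ by Lemma \ref{LemmaT3}. For each remaining term I would use $Q\le\overline\vartheta$ together with the elementary inequalities $|\sin(n\theta)|\le n|\sin\theta|$ and $|\sin(2\pi nx)|\le 2|\sin(\pi nx)|\le 2n|\sin(\pi x)|$, so that the ratios $\sin(2\pi nx)/\sin(\pi x)$ and $\sin(\pi nx)/\sin(\pi x)$ are at most $2n$, resp.\ $n$; summing the resulting Gaussian tails (of the form $\sum n^2 e^{-\alpha\pi y(n^2-1)}$, i.e.\ $\mu$ from \eqref{mmmx}, and its analogue in the variable $\alpha y/2$) isolates the $n=1$ term of the first sum (coefficient $2$, since $\sup_{x\in[0,1/4]}\sin(2\pi x)/\sin(\pi x)=2$) and the $n=2$ term of the second sum (coefficient $\sqrt2$), the rest being absorbed into explicitly bounded tails $\epsilon_3(\alpha)$ and $\epsilon_4(\alpha)$. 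This gives \eqref{HHHBB}; the argument is parallel to that of Lemma \ref{HHHA}, the only differences being the coefficient $2$ (rather than $\sqrt2$) in front of $\overline\vartheta(y/\alpha)$ and the $y$-threshold below.

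For \eqref{HardBB} I use that on $\overline{\mathcal{R}_L}\cap\{0\le x\le\tfrac14\}$ one has $|z|\ge1$ and $x\le\tfrac14$, so $y^2=|z|^2-x^2\ge\tfrac{15}{16}$, i.e.\ $y\ge\tfrac{\sqrt{15}}{4}$; together with $\alpha\ge1$ this pins down all the exponents. I would split on the size of $t:=\tfrac{y}{2\alpha}$. If $t$ is not too small (say $\mu(t)\le\tfrac12$) use the everywhere-valid bounds $\underline\vartheta(t)\ge(1-\mu(t))4\pi e^{-\pi t}$ and $\overline\vartheta\le(1+\mu)4\pi e^{-\pi(\cdot)}$ of Lemma \ref{LemmaT3}; after cancelling $4\pi$, \eqref{HardBB} becomes an elementary inequality in which the first error term carries $e^{-\frac{\alpha\pi y}{2}-\frac{\pi y}{\alpha}}\le e^{-\pi y(\alpha+1/\alpha)/2}\le e^{-\pi y}\le e^{-\pi\sqrt{15}/4}$ (by AM--GM) and the second carries $e^{-3\alpha\pi y/2}\le e^{-3\pi\sqrt{15}/8}$, both small enough to be dominated by $\tfrac1{2\sqrt2}$. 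If $t$ is small, use instead the first branches: $\underline\vartheta(t)\gtrsim t^{-3/2}e^{-\pi k^2/t}$ with the free parameter $k$, while $\overline\vartheta$ is a comparable power of $t$ times an exponentially small factor; since $\alpha y=\tfrac{y^2}{2t}\ge\tfrac{15}{32t}$, one has $e^{-\alpha\pi y/2}\le e^{-15\pi/(64t)}$, so choosing $k$ with $k^2\le\tfrac{15}{64}$ (so $k\le\tfrac{\sqrt{15}}{8}<\tfrac12$) makes $e^{-\pi k^2/t}$ outweigh all the error factors as $t\to0$, closing the estimate.

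The main obstacle is this small-$t$ regime, where the leading positive term $\underline\vartheta(\tfrac{y}{2\alpha})$ is itself exponentially small in $1/t$, so the comparison with the error terms must be carried out at the level of exponents, exploiting both the exact constraint $y\ge\tfrac{\sqrt{15}}{4}$ (which is why this precise threshold appears) and the freedom in the parameter $k$ in \eqref{vartheta1}. Checking that $\epsilon_3,\epsilon_4$ satisfy the stated numerical bounds uniformly in $\alpha\ge1$ and $y\ge\tfrac{\sqrt{15}}{4}$ is then a routine estimate of Gaussian tails.
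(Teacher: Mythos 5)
Your proposal is correct and follows essentially the same route as the paper: the same one--dimensional theta decomposition as in \eqref{J200}, isolation of the $n=1$ term of the $\mathcal{W}_0$-part as the positive main term via $\underline\vartheta$, absorption of all remaining terms into the coefficients $2(1+\epsilon_3)$, $\sqrt2(1+\epsilon_4)$ using $Q\le\overline\vartheta$ and $|\sin(n\theta)|\le n|\sin\theta|$, and, for \eqref{HardBB}, a case split on the size of $\tfrac{y}{2\alpha}$ against a free parameter $k$ using the two branches of Lemma \ref{LemmaT3} together with $y\ge\tfrac{\sqrt{15}}{4}$, $\alpha\ge1$ and the AM--GM bound on $e^{-\frac{\pi y}{2}(\alpha+\frac1\alpha)}$. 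The only difference is organizational: you merge the paper's Cases A and C into one small-$t$ regime and defer the final uniform quantitative verification on $(0,k]$ (the paper's functions $F$, $G$, $H$ with the choice $k=\tfrac14$ and endpoint numerics) as a routine finite check, which is indeed where the remaining computation lies.
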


In the rest of this section we  shall prove Lemma \ref{HHHA} and Lemma \ref{HHHB}. By (\ref{W124}), we only need to consider the case $\alpha\geq1$. Let us start with the computation
\begin{equation}\aligned\label{J200}
\frac{\partial}{\partial x}\mathcal{W}_{\frac{1}{2}}(z;\alpha)=\sqrt{\frac{y}{\alpha}}\Big(&
\sum_{n=1}^\infty\frac{1}{2\sqrt2}ne^{-\frac{\pi \alpha yn^2}{2}}\frac{\partial}{\partial Y}\vartheta(\frac{y}{2\alpha};Y)\mid_{Y=n\frac{x+1}{2}}\\
\\&+\sum_{n=1}^\infty ne^{-\pi\alpha yn^2}\frac{\partial}{\partial Y}\vartheta(\frac{y}{\alpha};Y)\mid_{Y=nx}\Big).
 \endaligned\end{equation}

To refine the estimates, we divide the proof into two parts by $x\in[0,\frac{1}{4}]$ or $[\frac{1}{4},\frac{1}{2}]$. We first prove (\ref{HHHAA}) of Lemma \ref{HHHA}.

\noindent
{\bf Proof of (\ref{HHHAA}) of  Lemma \ref{HHHA}:}  Let $x\in[\frac{1}{4},\frac{1}{2}]$. We rewrite
\begin{equation}\aligned\label{Ialpha1}
\frac{\partial}{\partial x}
\mathcal{W}_{\frac{1}{2}}(z;\alpha)
=&\sqrt{\frac{y}{\alpha}}e^{-\frac{\pi \alpha y}{2}}\Big(
\frac{1}{2\sqrt2}\frac{\partial}{\partial Y}\vartheta(\frac{y}{2\alpha};Y)\mid_{Y=\frac{x+1}{2}}
+e^{-\frac{\pi \alpha y}{2}}\frac{\partial}{\partial Y}\vartheta(\frac{y}{\alpha};Y)\mid_{Y=x}\\
&\;\;\;\;\;\;\;\;\;\;+\frac{1}{\sqrt2}e^{-\frac{3\pi \alpha y}{2}}\frac{\partial}{\partial Y}\vartheta(\frac{y}{2\alpha};Y)\mid_{Y=x+1}\Big)\\
&+\sqrt{\frac{y}{\alpha}}\Big(
2e^{-4\pi \alpha y}\frac{\partial}{\partial Y}\vartheta(\frac{y}{\alpha};Y)\mid_{Y=2x}
+\sum_{n=3}^\infty\frac{1}{2\sqrt2}ne^{-\frac{\pi\alpha yn^2}{2}}\frac{\partial}{\partial Y}\vartheta(\frac{y}{2\alpha};Y)\mid_{Y=n\frac{x+1}{2}}\\
&\;\;\;\;\;\;\;\;\;\;+\sum_{n=3}^\infty ne^{-\pi\alpha y n^2}\frac{\partial}{\partial Y}\vartheta(\frac{y}{\alpha};Y)\mid_{Y=nx}\Big)\\
:=&\sqrt{\frac{y}{\alpha}}e^{-\frac{\pi \alpha y}{2}}\cdot J_1
+(\frac{y}{\alpha})^{\frac{1}{2}}\cdot J_2,
 \endaligned\end{equation}
where $J_1$ and $J_2$ are defined respectively at the last equality.

By the lower and upper bounds of $\vartheta$ in Lemma \ref{LemmaT3}, we have
\begin{equation}\aligned\label{J1}
 J_1
 &\geq\frac{1}{2\sqrt2}\underline{\vartheta} (\frac{y}{2\alpha})\sin(\pi x)-e^{-\frac{\pi \alpha y}{2}}\overline{\vartheta} (\frac{y}{\alpha})\sin(2\pi x)
 -\frac{1}{\sqrt2}e^{-\frac{3\pi \alpha y}{2}}\overline{\vartheta} (\frac{y}{2\alpha})\sin(2\pi x)\\
&=\sin(\pi x)\Big(
\frac{1}{2\sqrt2}\underline{\vartheta} (\frac{y}{2\alpha})-e^{-\frac{\pi \alpha y}{2}}\overline{\vartheta} (\frac{y}{\alpha})2\cos(\pi x)
 -\frac{1}{\sqrt2}e^{-\frac{3\pi \alpha y}{2}}\overline{\vartheta} (\frac{y}{2\alpha})2\cos(\pi x)
\Big)\\
&\geq\sin(\pi x)\Big(
\frac{1}{2\sqrt2}\underline{\vartheta} (\frac{y}{2\alpha})-\sqrt2e^{-\frac{\pi \alpha y}{2}}\overline{\vartheta} (\frac{y}{\alpha})
 -e^{-\frac{3\pi \alpha y}{2}}\overline{\vartheta} (\frac{y}{2\alpha})
\Big),
 \endaligned\end{equation}
 where $ \underline{\vartheta}$ and $ \overline{\vartheta}$  are defined at (\ref{vartheta1}) and (\ref{vartheta2}) respectively.

Here we have used the fact that $\sin (\pi (x+1))\leq0, \cos(\pi x)\leq\frac{\sqrt2}{2}\;\hbox{for}\;x\in[\frac{1}{4},\frac{1}{2}]$.

Next, we are going to estimate $J_2$.
By \eqref{Ialpha1}, using $|\sin(ny)|\leq n|\sin(y)|,y\in\R$ and Lemma \ref{LemmaT3}, we have
\begin{equation}\label{J214} \aligned
J_2 \geq
-\frac{1}{4\sqrt2}\overline{\vartheta} (\frac{y}{2\alpha})\sin(2\pi x)\sum_{n=3}^\infty n^2 e^{-\frac{\pi \alpha yn^2}{2}}
-\overline{\vartheta} (\frac{y}{\alpha})\sin(2\pi x)\sum_{n=3}^\infty n^2e^{-\pi \alpha y n^2}.
\endaligned
\end{equation}

Combining (\ref{J1}) and (\ref{J214}) we have
\begin{equation} \label{J124} \aligned
&J_1+e^{\frac{\pi\alpha y}{2}}J_2\geq \\
&
\Big(
\frac{1}{2\sqrt2}\underline{\vartheta} (\frac{y}{2\alpha})\sin(\pi x)-(1+\epsilon_1(\alpha))e^{-\frac{\pi \alpha y}{2}}\overline{\vartheta} (\frac{y}{\alpha})\sin(2\pi x)
 -(1+\epsilon_2(\alpha))\frac{1}{\sqrt2}e^{-\frac{3\pi \alpha y}{2}}\overline{\vartheta} (\frac{y}{2\alpha})\sin(2\pi x)
\Big)\\
&
\geq\sin(\pi x)\Big(
\frac{1}{2\sqrt2}\underline{\vartheta} (\frac{y}{2\alpha})-(1+\epsilon_1(\alpha))\sqrt2e^{-\frac{\pi \alpha y}{2}}\overline\vartheta(\frac{y}{\alpha})
 -(1+\epsilon_2(\alpha))e^{-\frac{3\pi \alpha y}{2}}\overline{\vartheta} (\frac{y}{2\alpha})
\Big), \;\hbox{for}\;x\in[\frac{1}{4},\frac{1}{2}],
\endaligned
\end{equation}
where the constants $\epsilon_1(\alpha)$ and $\epsilon_2(\alpha)$ are defined by
\begin{equation}
\epsilon_1(\alpha):=\sum_{n=3}^\infty n^2 e^{-\pi\alpha y(n^2-1)}\leq\sum_{n=3}^\infty n^2 e^{\frac{-\pi\sqrt3(n^2-1)}{2}}
\end{equation}
and
\begin{equation} \epsilon_2(\alpha):=\frac{1}{4}\sum_{n=3}^\infty n^2 e^{-\frac{\pi \alpha y(n^2-4)}{2}}
\leq \frac{1}{4}\sum_{n=3}^\infty n^2 e^{-\frac{\pi \sqrt3 (n^2-4)}{2}}
\end{equation}
It is easy to see that $ \epsilon_1 <3.18\cdot 10^{-9}, \epsilon_2 <2.782\cdot10^{-6}$.

The inequality (\ref{HHHAA}) then  follows from \eqref{Ialpha1} and (\ref{J124}).

\qed

\medskip

\noindent
{\bf Proof of (\ref{HardAA}) of Lemma \ref{HHHA}:} In the following we prove that  on the half fundamental domain with $x\in[\frac{1}{4},\frac{1}{2}]$ (equivalently, $y\geq\frac{\sqrt3}{2}$),  (\ref{HardAA}) holds  by assuming $ \epsilon_1=\epsilon_2=0$. The proof can be easily modified when two small constants are added.   We split the proof into three subcases as follows
$$\mathbf{A}: \frac{y}{\alpha},\frac{y}{2\alpha}\in(0,k];\mathbf{B}: \frac{y}{\alpha},\frac{y}{2\alpha}\in(k,+\infty);\mathbf{C}: \frac{y}{2\alpha}\in(0,k],\frac{y}{\alpha}\in(k,+\infty),$$
where the parameter $k\in(0,\frac{1}{2}]$ is to be chosen later.

\vskip0.1in

\noindent
{\bf Case $\mathbf{A}: \frac{y}{\alpha},\frac{y}{2\alpha}\in(0,k]$.} By Lemma \ref{LemmaT3}, we have
\begin{equation}\aligned\label{J1A}
 &\frac{1}{2\sqrt2}\underline\vartheta(\frac{y}{2\alpha})-\sqrt2e^{-\frac{\pi \alpha y}{2}}\overline\vartheta(\frac{y}{\alpha})
 -e^{-\frac{3\pi \alpha y}{2}}\overline\vartheta(\frac{y}{2\alpha})\\
\geq& \frac{1}{2\sqrt2}(\frac{y}{2\alpha})^{-\frac{3}{2}}\frac{1}{\sin(2\pi k)}
\Big(
e^{-\pi k^2\frac{2\alpha}{y}}k-e^{-\pi(1-k)^2\frac{2\alpha}{y}}(1-k)\\
&+e^{-\pi(1+k)^2\frac{2\alpha}{y}}(1+k-e^{-3\pi(1-2k)\frac{2\alpha}{y}})
\Big)
-\sqrt2 e^{-\frac{\pi \alpha y}{2}}(\frac{y}{\alpha})^{-\frac{3}{2}}
\Big(
1+(2-16\pi\frac{\alpha}{y})e^{-4\pi\frac{\alpha}{y}}
\Big)\\
&-e^{-\frac{3\pi \alpha y}{2}}(\frac{y}{2\alpha})^{-\frac{3}{2}}
\Big(
1+(2-16\pi\frac{2\alpha}{y})e^{-4\pi\frac{2\alpha}{y}})
\Big)\\
&\geq(\frac{y}{2\alpha})^{-\frac{3}{2}}
\Big(
\frac{1}{2\sqrt2}\frac{1}{\sin(2\pi k)} e^{-\pi k^2\frac{2\alpha}{y}}(k-\varepsilon_k)
-2e^{-\frac{\pi\alpha y}{2}}-e^{-\frac{3\pi \alpha y}{2}}\Big)\\
&\geq(\frac{y}{2\alpha})^{-\frac{3}{2}}
\Big(
\frac{1}{2\sqrt2} e^{-\pi k^2\frac{2\alpha}{y}}(k-\varepsilon_k)
-2e^{-\frac{\pi\alpha y}{2}}-e^{-\frac{3\pi \alpha y}{2}}\Big).
\endaligned\end{equation}
Here $\varepsilon_k$ is determined by
\begin{equation}\aligned\nonumber
\varepsilon_k:=\max_{\frac{y}{\alpha}\in[0,k]}\frac{e^{-\pi(1-k)^2\frac{2\alpha}{y}}(1-k)}{e^{-\pi k^2\frac{2\alpha}{y}}k} \leq\frac{1-k}{k}e^{-\pi(1-2k)\frac{2}{k}}.
 \endaligned\end{equation}
 Let $ k \in [0, 0.3]$. Then it is easy to see that
$\varepsilon_k\leq\varepsilon_{0.3}=0.000536\cdots<\frac{3}{5000}$
which yields that
\begin{equation}\aligned\label{J1A1}
 &\frac{1}{2\sqrt2}\underline\vartheta(\frac{y}{2\alpha})-\sqrt2e^{-\frac{\pi \alpha y}{2}}\overline\vartheta(\frac{y}{\alpha})
 -e^{-\frac{3\pi \alpha y}{2}}\overline\vartheta(\frac{y}{2\alpha})\\
 \geq&(\frac{y}{2\alpha})^{-\frac{3}{2}}
 \Big(
\frac{1}{2\sqrt2} e^{-\pi k^2\frac{2\alpha}{y}}(k-\frac{3}{5000})
-2e^{-\frac{\pi\alpha y}{2}}-e^{-\frac{3\pi \alpha y}{2}}\Big)\\
=& e^{-\pi k^2\frac{2\alpha}{y}}(\frac{y}{2\alpha})^{-\frac{3}{2}}
\Big(
\frac{k-\frac{3}{5000}}{2\sqrt2}-2e^{-\pi\alpha\frac{y^2-4k^2}{2y}}
-e^{-(\pi\alpha\frac{y^2-4k^2}{2y}+\pi\alpha y)}
\Big)\\
\geq&e^{-\pi k^2\frac{2\alpha}{y}}(\frac{y}{2\alpha})^{-\frac{3}{2}}
\Big(
\frac{k-\frac{3}{5000}}{2\sqrt2}-2(1+\frac{1}{5000})e^{-\pi\alpha\frac{y^2-4k^2}{2y}}
\Big)\\
\geq&e^{-\pi k^2\frac{2\alpha}{y}}(\frac{y}{2\alpha})^{-\frac{3}{2}}
\Big(
\frac{k-\frac{3}{5000}}{2\sqrt2}-2(1+\frac{1}{5000})e^{-\pi\frac{y^2-4k^2}{2k}}
\Big), \hbox{since}\;\alpha\geq\frac{y}{k}\\
\geq&e^{-\pi k^2\frac{2\alpha}{y}}(\frac{y}{2\alpha})^{-\frac{3}{2}}
\Big(
\frac{k-\frac{3}{5000}}{2\sqrt2}-2(1+\frac{1}{5000})e^{-\pi(\frac{3}{8k}-2k)}
\Big), \hbox{since}\;y^2\geq1-x^2=\frac{3}{4}.
 \endaligned\end{equation}
Here we have used the fact that
$$\max_{\frac{y}{\alpha}\in[0,k],k\in[0,0.3]}\frac{e^{-\pi\alpha y}}{2}
\leq\frac{e^{-\pi\frac{1}{0.3}y^2}}{2}\leq\frac{e^{-\pi\frac{1}{0.3}\frac{3}{4}}}{2}=0.00019\cdots<\frac{1}{5000}, \hbox{since}\;y^2\geq1-x^2\geq\frac{3}{4}.$$
An elementary computation shows that the function
\begin{equation}\label{fk}
f(k):=\frac{k-\frac{3}{5000}}{2\sqrt2}-2(1+\frac{1}{5000})e^{-\pi(\frac{3}{8k}-2k)} >0
\end{equation}
for $ k \in [0, k_f]$ where $ k_f= 0.25094\cdots$.

Therefore, by \eqref{J1A1},
\begin{equation}\aligned\label{J1A1}
  \frac{1}{2\sqrt2}\underline\vartheta(\frac{y}{2\alpha})-\sqrt2e^{-\frac{\pi \alpha y}{2}}\overline\vartheta(\frac{y}{\alpha})
 -e^{-\frac{3\pi \alpha y}{2}}\overline\vartheta(\frac{y}{2\alpha})
 \geq
 e^{-\pi k^2\frac{2\alpha}{y}}(\frac{y}{2\alpha})^{-\frac{3}{2}} f(k)>0
 \endaligned\end{equation}
if $k\in(0,k_f]$, where $k_f\approx0.25094\cdots$. This gives the first range of the free parameter $k$.
\vskip0.1in

\noindent
{\bf Case $\mathbf{B}: \frac{y}{\alpha},\frac{y}{2\alpha}\in(k,\infty)$.} By Lemma \ref{LemmaT3} again, we have
\begin{equation}\aligned\label{J1B}
&\frac{1}{2\sqrt2}\underline\vartheta(\frac{y}{2\alpha})-\sqrt2e^{-\frac{\pi \alpha y}{2}}\overline\vartheta(\frac{y}{\alpha})
 -e^{-\frac{3\pi \alpha y}{2}}\overline\vartheta(\frac{y}{2\alpha})\\
\geq&\frac{1}{2\sqrt2}(1-\mu(k))4\pi e^{-\pi\frac{y}{2\alpha}}
-\sqrt2 e^{-\frac{\pi\alpha y}{2}}(1+\mu(k))4\pi e^{-\pi\frac{y}{\alpha}}
-e^{-\frac{3\pi\alpha y}{2}}(1+\mu(k))4\pi e^{-\pi\frac{y}{2\alpha}}\\
=&4\pi e^{-\pi\frac{y}{2\alpha}}\Big(
\frac{1}{2\sqrt2}(1-\mu(k))-\sqrt2(1+\mu(k))e^{-\frac{\pi y}{2}(\alpha+\frac{1}{\alpha})}
-(1+\mu(k))e^{-\frac{3\pi\alpha y}{2}}
\Big)\\
\geq&4\pi e^{-\pi\frac{y}{2\alpha}}\Big(
\frac{1}{2\sqrt2}(1-\mu(k))-(\sqrt2+\delta)(1+\mu(k))e^{-\frac{\pi y}{2}(\alpha+\frac{1}{\alpha})}
\Big)\\
\geq&4\pi e^{-\pi\frac{y}{2\alpha}}\Big(
\frac{1}{2\sqrt2}(1-\mu(k))-(\sqrt2+e^{-\frac{\pi\sqrt3}{4}})(1+\mu(k))e^{-{\pi y}}\Big),\;\hbox{since}\;\frac{1}{2}(\alpha+\frac{1}{\alpha})\geq1, \alpha>0\\
\geq&4\pi e^{-\pi\frac{y}{2\alpha}}\Big(
\frac{1}{2\sqrt2}(1-\mu(k))-(\sqrt2+e^{-\frac{\pi\sqrt3}{4}})(1+\mu(k))e^{-\frac{\sqrt3\pi}{2}}\Big),\;\hbox{since}\;y\geq \sqrt{1-x^2}\geq\frac{\sqrt3}{2}.
 \endaligned\end{equation}
Here $\delta$ is defined by
$$
\delta=\max_{\alpha\geq1,y\geq\sqrt{1-x^2},x\in[\frac{1}{4},\frac{1}{2}]}\frac{e^{-\frac{3\pi\alpha y}{2}}}{e^{-\frac{\pi y}{2}(\alpha+\frac{1}{\alpha})}}.
$$
Then $$
\delta=\max_{\alpha\geq1,y\geq\sqrt{1-x^2},x\in[\frac{1}{4},\frac{1}{2}]} e^{-\pi y(\alpha-\frac{1}{2\alpha})}
\leq e^{-\pi y(1-\frac{1}{2})}\leq e^{-\frac{\pi\sqrt3}{4}}.
$$
Let
\begin{equation}\aligned\label{gk}
 g(k):&=\frac{1}{2\sqrt2}(1-\mu(k))-(\sqrt2+e^{-\frac{\pi\sqrt3}{4}})e^{-\frac{\sqrt3\pi}{2}}(1+\mu(k)), \;k\in[0,\frac{1}{2}]\\
 &=(\frac{1}{2\sqrt2}+(\sqrt2+e^{-\frac{\pi\sqrt3}{4}})e^{-\frac{\sqrt3\pi}{2}})
 (\frac{\frac{1}{2\sqrt2}-(\sqrt2+e^{-\frac{\pi\sqrt3}{4}})e^{-\frac{\sqrt3\pi}{2}}}{\frac{1}{2\sqrt2}+(\sqrt2+e^{-\frac{\pi\sqrt3}{4}})e^{-\frac{\sqrt3\pi}{2}}}
 -\mu(k)
 ).
 \endaligned\end{equation}
Then $g'(k)>0$ for any $k>0$ by $\mu'(k)>0$ and $g(k)>0$  if and only if $k>k_g$. Numerically, $k_g=0.22247\cdots$.

 Therefore by \eqref{J1}, \eqref{J1B} and \eqref{gk}, one has
\begin{equation}\aligned\nonumber
&\frac{1}{2\sqrt2}\underline\vartheta(\frac{y}{2\alpha})-\sqrt2e^{-\frac{\pi \alpha y}{2}}\overline\vartheta(\frac{y}{\alpha})
 -e^{-\frac{3\pi \alpha y}{2}}\overline\vartheta(\frac{y}{2\alpha})\geq4\pi e^{-\pi\frac{y}{2\alpha}}g(k)>0 \;\hbox{if}\;k>k_g=0.22247\cdots.
 \endaligned\end{equation}

This gives the second range of $k$ to be selected.
\vskip0.1in

\noindent
{\bf Case $\mathbf{C}: \frac{y}{2\alpha}\in(0,k],\frac{y}{\alpha}\in(k,\infty)$.} By Lemma \ref{LemmaT3}
\begin{equation}\aligned\label{J1C}
 &\frac{1}{2\sqrt2}\underline\vartheta(\frac{y}{2\alpha})-\sqrt2e^{-\frac{\pi \alpha y}{2}}\overline\vartheta(\frac{y}{\alpha})
 -e^{-\frac{3\pi \alpha y}{2}}\overline\vartheta(\frac{y}{2\alpha})\\
\geq&\frac{1}{2\sqrt2}(\frac{y}{2\alpha})^{-\frac{3}{2}}\frac{1}{\max_{x\in[0,k]}{\sin(2\pi x)}}\Big(
e^{-\pi k^2\frac{2\alpha}{y}}k-e^{-\pi(1-k)^2\frac{2\alpha}{y}}(1-k)\\
&\;\;\;\;+e^{-\pi(1+k)^2\frac{2\alpha}{y}}(1+k-e^{-3\pi(1-2k)\frac{2\alpha}{y}})
\Big)\\
&\;\;\;\;-\sqrt2 e^{-\frac{\pi \alpha y}{2}}(1+\mu(k))4\pi e^{-\pi\frac{y}{\alpha}}
-e^{-\frac{3\pi\alpha y}{2}}(\frac{y}{2\alpha})^{-\frac{3}{2}}(1+(2-4\pi\frac{2\alpha}{y})e^{-\pi\frac{2\alpha}{y}})\\
\geq&\frac{1}{2\sqrt2}(\frac{y}{2\alpha})^{-\frac{3}{2}}
e^{-\pi k^2\frac{2\alpha}{y}}(k-\epsilon_k)
-\sqrt2 e^{-\frac{\pi \alpha y}{2}}(1+\mu(k))4\pi e^{-\pi\frac{y}{\alpha}}
-e^{-\frac{3\pi\alpha y}{2}}(\frac{y}{2\alpha})^{-\frac{3}{2}},
\endaligned\end{equation}
where $\epsilon_k$ is defined by
\begin{equation}\label{ek1}
\epsilon_k:=\frac{e^{-\pi(1-k)^2\frac{2\alpha}{y}}(1-k)}{e^{-\pi k^2\frac{2\alpha}{y}}k}.
\end{equation}
Since the function $\epsilon_k$ will also appear in the proof of the next lemma, we single out the elementary but useful property  here:
\begin{equation}\label{ek2}
\frac{\partial}{\partial k}\epsilon_k=(1-k)(\pi-\frac{k}{1-k})\frac{e^{-\frac{\pi(1-2k)}{k}}}{k^3}.
\end{equation}
Hence $\epsilon_k$ is decreasing with respect to $k$ if $k\in(0,\frac{\pi}{1+\pi}]$.

By cases $\mathbf{A}$ and $\mathbf{B}$, we will select the parameter by $k\in(k_g,k_f)=(0.22247\cdots,0.25094\cdots)$.
\vskip0.1in
Let
$$
k\in(k_g,k_f).
$$
We further take $k\in(k_a,k_b]$, where $k_a=k_g$ and $k_b=\min\{k_f,\frac{1}{4}\}$. The number $\frac{1}{4}$ is taken by the bound to control that
\begin{equation}\aligned\nonumber
\frac{1}{\max_{x\in[0,k]}{\sin(2\pi x)}}\geq1
\endaligned\end{equation}
provided that $k\in(0,k_b]$. We keep the symbols $k_a, k_b$ to make the estimates  clearer, and keep in mind that
$k_a=0.22247\cdots,k_b=\frac{1}{4}$.

On the other hand it is easy to see that
$\epsilon_k<\epsilon_{k_f}=0.005843\cdots<\frac{3}{500}$. Note that
 $\frac{y}{2\alpha}\in(0,k]$ implies that
\begin{equation}\aligned\label{alpha1}
\alpha\geq\frac{y}{2k}.
\endaligned\end{equation}
Our method here works for all the $k$ belongs to the range $(k_a,k_b]$. Here for simplicity, we set
\begin{equation}\aligned\nonumber
k=\frac{1}{4}.
\endaligned\end{equation}
In this case, by \eqref{alpha1},
\begin{equation}\aligned\label{alpha1a}
\alpha\geq\sqrt3.
\endaligned\end{equation}
Here we have used that $y\geq\frac{\sqrt3}{2}$ implied by $x\in[\frac{1}{4},\frac{1}{2}]$
since $z=(x,y)$ belongs to the domain $\overline{{\mathcal{R}_L}}$.
Now to deal with the last inequality in \eqref{J1C}, we rewrite
\begin{equation}\aligned\label{alpha1b}
\alpha y=\alpha^2 \frac{y}{\alpha}\geq 3\frac{y}{\alpha}.
\endaligned\end{equation}

The bounds of $\frac{y}{\alpha}$ in this case implies that $\frac{1}{k}\leq\frac{2\alpha}{y}\leq\frac{2}{k}$.
Fixing all the parameters except $k$ in the expression $e^{-\pi k^2\frac{2\alpha}{y}}(k-\epsilon_k)$, then we have
\begin{equation}\aligned\label{bigk}
\frac{\partial}{\partial k}\big(e^{-\pi k^2\frac{2\alpha}{y}}(k-\epsilon_k)\big)=-e^{-\pi k^2\frac{2\alpha}{y}}
\big(
\frac{2\alpha}{y}\pi(k-\epsilon_k)-1
\big)
\endaligned\end{equation}
which is positive since trivially
\begin{equation}\aligned\nonumber
\frac{2\alpha}{y}\pi(k-\epsilon_k)-1\geq \pi(1-\frac{\epsilon_k}{k})-1>\frac{\pi}{2}-1>0.
\endaligned\end{equation}

To simplify the notations, we set $\gamma=\frac{y}{\alpha}$, hence $
\gamma\in(k,2k]=(\frac{1}{4},\frac{1}{2}].$

In view of \eqref{J1C}, by the monotonicity in \eqref{bigk},
\begin{equation}\aligned\label{J1C1}
 &\frac{1}{2\sqrt2}\underline\vartheta(\frac{y}{2\alpha})-\sqrt2e^{-\frac{\pi \alpha y}{2}}\overline\vartheta(\frac{y}{\alpha})
 -e^{-\frac{3\pi \alpha y}{2}}\overline\vartheta(\frac{y}{2\alpha})\\
\geq&
\gamma^{-\frac{3}{2}}e^{-\frac{\pi}{2}\frac{1}{\gamma}}(\frac{1}{4}-\epsilon_k)
-\sqrt2 e^{-\frac{\pi}{2}\alpha^2 \gamma}(1+\mu(\frac{1}{4}))4\pi e^{-\pi \gamma}
-e^{-\frac{3\pi}{2}\alpha^2 \gamma}(\frac{\gamma}{2})^{-\frac{3}{2}}.
\endaligned\end{equation}
Using the elementary but useful bound for $\alpha$ in \eqref{alpha1a}, we  further get the lower bound with the only variable $\gamma$, i.e.,
\begin{equation}\aligned\label{J1C1new}
 &\frac{1}{2\sqrt2}\underline\vartheta(\frac{y}{2\alpha})-\sqrt2e^{-\frac{\pi \alpha y}{2}}\overline\vartheta(\frac{y}{\alpha})
 -e^{-\frac{3\pi \alpha y}{2}}\overline\vartheta(\frac{y}{2\alpha})\\
\geq&
\gamma^{-\frac{3}{2}}e^{-\frac{\pi}{2}\frac{1}{\gamma}}(\frac{1}{4}-\epsilon_k)
-\sqrt2 e^{-\frac{3\pi}{2} \gamma}(1+\mu(\frac{1}{4}))4\pi e^{-\pi \gamma}
-e^{-\frac{9\pi}{2} \gamma}(\frac{\gamma}{2})^{-\frac{3}{2}}.
\endaligned\end{equation}

Now the only variable in above is the parameter $\gamma$, to make the structure clear,
let
\begin{equation}\aligned\label{h}
h(\gamma):=C_1\gamma^{-\frac{3}{2}}e^{-\frac{\pi}{2}\frac{1}{\gamma}}
-C_2e^{-\frac{5\pi}{2} \gamma}
-C_3e^{-\frac{9\pi}{2} \gamma}(\frac{\gamma}{2})^{-\frac{3}{2}}
\endaligned\end{equation}
with the constants determined by \eqref{J1C1new}
\begin{equation}\aligned\label{hc}
C_1&=\frac{1}{4}-\frac{3}{500}=0.244\\
C_2&=\sqrt2 (1+\mu(\frac{1}{4}))4\pi=24.80995446\cdots\\
C_3&=1,
\endaligned\end{equation}
where $\mu(X)$ is defined in \eqref{mmmx} and $\mu(\frac{1}{4})=0.39605042\cdots$.

By \eqref{J1C1new},
\begin{equation}\aligned\label{J1C1a}
 \frac{1}{2\sqrt2}\underline\vartheta(\frac{y}{2\alpha})-\sqrt2e^{-\frac{\pi \alpha y}{2}}\overline\vartheta(\frac{y}{\alpha})
 -e^{-\frac{3\pi \alpha y}{2}}\overline\vartheta(\frac{y}{2\alpha})
\geq h(\gamma), \gamma=\frac{y}{\alpha}.
\endaligned\end{equation}

 It remains to show that
\begin{equation}\aligned\label{hgammaw}
h(\gamma)>0,\hbox{for}\; \gamma\in(\frac{1}{4},\frac{1}{2}].
\endaligned\end{equation}
Simple calculation shows that
\begin{equation}\aligned\label{hgamma}
\big(\gamma^{-\frac{3}{2}}e^{-\frac{\pi}{2}\frac{1}{\gamma}}\big)'
=\gamma^{-\frac{7}{2}}e^{-\frac{\pi}{2}\frac{1}{\gamma}}(\frac{\pi}{2}-\frac{3}{2}\gamma^2),
\endaligned\end{equation}
while $\gamma\in(\frac{1}{4},\frac{1}{2}]$ in \eqref{hgammaw}, one has $\frac{\pi}{2}-\frac{3}{2}\gamma^2>0$. Hence, we infer that
$h(\gamma)$ is monotonically increasing for $\gamma\in(\frac{1}{4},\frac{1}{2}]$ and  it follows that
\begin{equation}\aligned\label{hgamma}
h(\gamma)>h(\frac{1}{4})=0.003734\cdots>0,\;\; \gamma\in(\frac{1}{4},\frac{1}{2}].
\endaligned\end{equation}
Therefore, by \eqref{J1C1new} and \eqref{h}, we have
\begin{equation}\aligned\label{KeyE}
 \frac{1}{2\sqrt2}\underline\vartheta(\frac{y}{2\alpha})-\sqrt2e^{-\frac{\pi \alpha y}{2}}\overline\vartheta(\frac{y}{\alpha})
 -e^{-\frac{3\pi \alpha y}{2}}\overline\vartheta(\frac{y}{2\alpha})
>0.
\endaligned\end{equation}

 By the same proof, we can take the parameter $k$ to be $\frac{1}{4}$ and  then all the estimates work. In fact, using the same method above, for any $k\in(k_g,\frac{1}{4}]$, we can show that the inequality \eqref{KeyE} holds.

Notice that the constants $\epsilon_j(\alpha),j=1,2$ are very close to zero. With almost the same computations in the lower bound estimates \eqref{KeyE}, we have that
for $x\in[\frac{1}{4},\frac{1}{2}]$ and $y\geq\frac{\sqrt3}{2}$
$$
\frac{1}{2\sqrt2}\underline\vartheta(\frac{y}{2\alpha})-(1+\epsilon_1(\alpha))\sqrt2e^{-\frac{\pi \alpha y}{2}}\overline\vartheta(\frac{y}{\alpha})
 -(1+\epsilon_2(\alpha))e^{-\frac{3\pi \alpha y}{2}}\overline\vartheta(\frac{y}{2\alpha})>0.
$$
The proof of (\ref{HardAA}) is thus complete.

\qed

Finally we prove Lemma \ref{HHHB}. The proof is similar to that of Lemma \ref{HHHA}. We first have


\noindent
{\bf Proof of (\ref{HHHBB}) of Lemma \ref{HHHB}:} Let $x\in[0,\frac{1}{4}].$
Similar to the proof of Lemma \ref{HHHA}, we divide
\begin{equation}\aligned\label{IJ2}
\frac{\partial}{\partial x}
&\mathcal{W}_{\frac{1}{2}}(z;\alpha)\\
=&e^{-\frac{\pi\alpha y}{2}}\sqrt{\frac{y}{\alpha}}\Big(
\frac{1}{2\sqrt2}\frac{\partial}{\partial Y}\vartheta(\frac{y}{2\alpha};Y)\mid_{Y=\frac{x+1}{2}}
+e^{-\frac{\pi\alpha y}{2}}\frac{\partial}{\partial Y}\vartheta(\frac{y}{\alpha};Y)\mid_{Y=x}\\
&\;\;\;+\frac{1}{\sqrt2}e^{-\frac{3\pi\alpha y}{2}}\frac{\partial}{\partial Y}\vartheta(\frac{y}{2\alpha};Y)\mid_{Y=x+1}
+2e^{-\frac{7\pi\alpha y}{2}}\frac{\partial}{\partial Y}\vartheta(\frac{y}{\alpha};Y)\mid_{Y=2x}
\Big)\\
&\;\;\;+\sqrt{\frac{y}{\alpha}}\Big(
\frac{3}{2\sqrt2}e^{-\frac{9\pi\alpha y}{2}}\frac{\partial}{\partial Y}\vartheta(\frac{y}{2\alpha};Y)\mid_{Y=3\frac{x+1}{2}}
+\sum_{n=4}^\infty\frac{1}{2\sqrt2}n e^{-\frac{\pi\alpha yn^2}{2}}\frac{\partial}{\partial Y}\vartheta(\frac{y}{2\alpha};Y)\mid_{Y=n\frac{x+1}{2}}\\
&\;\;\;+\sum_{n=3}^\infty ne^{-\pi\alpha y n^2}\frac{\partial}{\partial Y}\vartheta(\frac{y}{\alpha};Y)\mid_{Y=nx}
\Big)\\
& =\sqrt{\frac{y}{\alpha}}\Big(e^{-\frac{\pi\alpha y}{2}}J_{3}
+ J_{4}\Big).
 \endaligned\end{equation}
Here $J_{3} $ and $ J_{4}$ are defined at the last equality.

For the lower bound of $J_3$, since $ \sin (4\pi x) \geq0$, by Lemma \ref{LemmaT3}
\begin{equation}\aligned\label{J111}
J_3
&\geq
\frac{1}{2\sqrt2}\underline\vartheta(\frac{y}{2\alpha})\sin(\pi x)
-e^{-\frac{\pi\alpha y}{2}}\overline\vartheta(\frac{y}{\alpha})\sin(2\pi x)\\
&\;\;\;-\frac{1}{\sqrt2}e^{-\frac{3\pi\alpha y}{2}}\overline\vartheta(\frac{y}{2\alpha})\sin(2\pi x)
-2e^{-\frac{7\pi\alpha y}{2}}\overline\vartheta(\frac{y}{\alpha})\sin(4\pi x).
 \endaligned\end{equation}

Observe that for $x\in[0,\frac{1}{4}]$
$$
\frac{\partial}{\partial Y}\vartheta(\frac{y}{2\alpha};Y)\mid_{Y=3\frac{x+1}{2}}
\geq\underline\vartheta(\frac{y}{2\alpha})\sin(3\pi x)\geq0.
$$
To compare the terms in $J_3$, using $|\sin(ny)|\leq n|\sin(y)|,y\in\R$ and Lemma \ref{LemmaT3},
we estimate that
\begin{equation}\aligned\label{J122}
J_4
&\geq
-\frac{1}{4\sqrt2}\sum_{n=4}^\infty n^2e^{-\frac{\pi\alpha yn^2}{2}}\overline\vartheta(\frac{y}{2\alpha})\sin(2\pi x)
-\sum_{n=3}^\infty n^2e^{-\pi\alpha y n^2}\overline\vartheta(\frac{y}{\alpha})\sin(2\pi x).
 \endaligned\end{equation}

In fact, combining \eqref{J111} and \eqref{J122}, by collecting the terms, one has
\begin{equation}\aligned\label{PPP1}
J_3+e^{\frac{\pi\alpha y}{2}}J_4
&\geq
\frac{1}{2\sqrt2}\underline\vartheta(\frac{y}{2\alpha})\sin(\pi x)
-e^{-\frac{\pi\alpha y}{2}}\overline\vartheta(\frac{y}{\alpha})\sin(2\pi x)\Big(
1+4e^{-3\pi\alpha y}+\sum_{n=3}n^2 e^{-\pi\alpha y(n^2-1)}
\Big)\\
&\;\;\;-\frac{1}{\sqrt2}e^{-\frac{3\pi\alpha y}{2}}\overline\vartheta(\frac{y}{2\alpha})\sin(2\pi x)
\Big(
1+\frac{1}{4}\sum_{n=4}^\infty n^2 e^{-\frac{\pi\alpha y(n^2-4)}{2}}
\Big)\\
&\geq\sin(\pi x)\Big(
\frac{1}{2\sqrt2}\underline\vartheta(\frac{y}{2\alpha})
-2e^{-\frac{\pi\alpha y}{2}}\overline\vartheta(\frac{y}{\alpha})(
1+\epsilon_3 (\alpha))
-{\sqrt2}e^{-\frac{3\pi\alpha y}{2}}\overline\vartheta(\frac{y}{2\alpha})
(
1+\epsilon_4 (\alpha)
)
\Big).
 \endaligned\end{equation}
The constants $\epsilon_j(\alpha), j=3,4$ are defined by
$$
\epsilon_3 (\alpha)=4e^{-3\pi\alpha y}+\sum_{n=3}n^2 e^{-\pi\alpha y(n^2-1}, \
\epsilon_4 (\alpha)=\frac{1}{4}\sum_{n=4}^\infty n^2 e^{-\frac{\pi\alpha y(n^2-4)}{2}}.
$$

Since $x\in[0,\frac{1}{4}]$,  $y\geq\sqrt{1-x^2}\geq\frac{\sqrt{15}}{4}$. A direct calculation shows that $\epsilon_3 (\alpha) <1.1\cdot 10^{-4},\epsilon_4 (\alpha) < 5.7 \cdot 10^{-16}.$

Combining \eqref{IJ2} and \eqref{PPP1}, we obtain (\ref{HHHBB}).

\qed

\vskip0.3in

\noindent
{\bf Proof of (\ref{HardBB})  of Lemma \ref{HHHB}:} It suffices to prove (\ref{HardBB}) for $y\geq\sqrt{1-x^2}\geq\frac{\sqrt{15}}{4}$.

The proof is similar to that of (\ref{HHHAA}) of  Lemma \ref{HHHA}. We first divide the proof into three subcases
$$\mathbf{A}: \frac{y}{\alpha},\frac{y}{2\alpha}\in(0,k];\mathbf{B}: \frac{y}{\alpha},\frac{y}{2\alpha}\in(0,k]\in(k,+\infty);\mathbf{C}: \frac{y}{\alpha}\in(0,k],\frac{y}{2\alpha}\in(k,+\infty)$$
where $k$ will be chosen appropriately later.

\medskip

\noindent
{\bf Case $\mathbf{A}: \frac{y}{\alpha},\frac{y}{2\alpha}\in(0,k]$.} By Lemma \ref{LemmaT3}
\begin{equation}\aligned\nonumber
& \frac{1}{2\sqrt2}\underline\vartheta(\frac{y}{2\alpha})
-2(1+\epsilon_3 (\alpha))e^{-\frac{\pi\alpha y}{2}}\overline\vartheta(\frac{y}{\alpha})
-{\sqrt2}(1+\epsilon_4 (\alpha))e^{-\frac{3\pi\alpha y}{2}}\overline\vartheta(\frac{y}{2\alpha})\\
\geq&\frac{1}{2\sqrt2}(\frac{y}{2\alpha})^{-\frac{3}{2}}\frac{1}{\sin(2\pi k)}\big(
e^{-\pi k^2\frac{2\alpha}{y}}k-e^{-\pi(1-k)^2\frac{2\alpha}{y}}(1-k)
\big)\\
&-2(1+\epsilon_3 (\alpha))e^{-\frac{\pi\alpha y}{2}}(\frac{y}{\alpha})^{-\frac{3}{2}}
-\sqrt2(1+\epsilon_4 (\alpha))e^{-\frac{3\pi\alpha y}{2}}(\frac{y}{2\alpha})^{-\frac{3}{2}}\\
\geq&(\frac{y}{2\alpha})^{-\frac{3}{2}}e^{-\pi k^2\frac{2\alpha}{y}}
\Big(
\frac{1}{2\sqrt2}(k-\varepsilon_k)-e^{-\frac{\pi\alpha(y^2-4k^2)}{2y}}
2\sqrt2(1+\epsilon_3 (\alpha))(1+\frac{\sqrt2(1+\eta_2(\alpha))e^{-\pi\alpha y}}{2\sqrt2(1+\epsilon_3 (\alpha))})
\Big).
\endaligned\end{equation}
Here $\varepsilon_k$ is determined by
$$
\varepsilon_k:=\max_{\frac{y}{\alpha}\in[0,k]}\frac{e^{-\pi(1-k)^2\frac{2\alpha}{y}}(1-k)}{e^{-\pi k^2\frac{2\alpha}{y}}k}.
$$
Let
$$
c(\alpha):=2\sqrt2(1+\epsilon_3 (\alpha))(1+\frac{\sqrt2(1+\epsilon_4 (\alpha))e^{-\pi\alpha y}}{2\sqrt2(1+\eta_1(\alpha))})
$$
and
$$
F_0(k):=\frac{1}{2\sqrt2}(k-\varepsilon_k)-c(\alpha)e^{-\frac{\pi\alpha(y^2-4k^2)}{2y}}.
$$
Then
\begin{equation}\aligned\label{F0k}
 \frac{1}{2\sqrt2}\underline\vartheta(\frac{y}{2\alpha})
-2(1+\epsilon_3 (\alpha))e^{-\frac{\pi\alpha y}{2}}\overline\vartheta(\frac{y}{\alpha})
-{\sqrt2}(1+\epsilon_4(\alpha))e^{-\frac{3\pi\alpha y}{2}}\overline\vartheta(\frac{y}{2\alpha})
\geq(\frac{y}{2\alpha})^{-\frac{3}{2}}e^{-\pi k^2\frac{2\alpha}{y}}
F_0(k).
\endaligned\end{equation}

Let $k\in[0,0.3]$.
Since $\frac{y}{\alpha}\in[0,k]$ and $y\geq\frac{\sqrt{15}}{4}$, one has
$$
\varepsilon_k<\frac{3}{5000},\;c(\alpha)<2.8287
$$ and
\begin{equation}\aligned\label{AAAF}
F_0(k)\geq F(k):=\frac{1}{2\sqrt2}(k-\varepsilon_k)-c(\alpha)e^{-\pi(\frac{15}{32k}-2k)}.
\endaligned\end{equation}
Hence, by \eqref{F0k}, one has
\begin{equation}\aligned\label{TTTHHH}
 \frac{1}{2\sqrt2}\underline\vartheta(\frac{y}{2\alpha})
-2(1+\epsilon_3 (\alpha))e^{-\frac{\pi\alpha y}{2}}\overline\vartheta(\frac{y}{\alpha})
-{\sqrt2}(1+\epsilon_4 (\alpha))e^{-\frac{3\pi\alpha y}{2}}\overline\vartheta(\frac{y}{2\alpha})
\geq(\frac{y}{2\alpha})^{-\frac{3}{2}}e^{-\pi k^2\frac{2\alpha}{y}}
F(k).
\endaligned\end{equation}
Now $F(k)$ is an elementary function and is decreasing on $[0,0.3]$, and computation shows that $F(k)>0$ equivalents to
$$k\in(0,k_F), k_F=0.28689\cdots.$$
Therefore, by \eqref{TTTHHH}, one has
\begin{equation}\aligned\nonumber
 \frac{1}{2\sqrt2}\underline\vartheta(\frac{y}{2\alpha})
-2(1+\epsilon_3 (\alpha))e^{-\frac{\pi\alpha y}{2}}\overline\vartheta(\frac{y}{\alpha})
-{\sqrt2}(1+\epsilon_4 (\alpha))e^{-\frac{3\pi\alpha y}{2}}\overline\vartheta(\frac{y}{2\alpha})
\geq(\frac{y}{2\alpha})^{-\frac{3}{2}}e^{-\pi k^2\frac{2\alpha}{y}}
F(k)>0
\endaligned\end{equation}
provided that $k\in(0,k_F)$ with $k_F=0.28689\cdots$.

This gives the first range of $k$ to be selected in completing our proof.
\vskip0.1in

\noindent
{\bf Case $\mathbf{B}: \frac{y}{\alpha},\frac{y}{2\alpha}\in(k,\infty)$:}
By Lemma \ref{LemmaT3}
\begin{equation}\aligned\nonumber
& \frac{1}{2\sqrt2}\underline\vartheta(\frac{y}{2\alpha})
-2(1+\epsilon_3 (\alpha))e^{-\frac{\pi\alpha y}{2}}\overline\vartheta(\frac{y}{\alpha})
-{\sqrt2}(1+\epsilon_4 (\alpha))e^{-\frac{3\pi\alpha y}{2}}\overline\vartheta(\frac{y}{2\alpha})\\
\geq&\frac{1}{2\sqrt2}(1-\mu(k))4\pi e^{-\pi\frac{y}{2\alpha}}
-2(1+\epsilon_3 (\alpha))(1+\mu(k))4\pi e^{-\pi\frac{y}{\alpha}}e^{-\frac{\pi\alpha y}{2}}\\
&-\sqrt2(1+\epsilon_4 (\alpha))(1+\mu(k))4\pi e^{-\pi\frac{y}{2\alpha}}e^{-\frac{3\pi\alpha y}{2}}\\
=& 4\pi e^{-\frac{y}{2\alpha}}
\Big(
\frac{1}{2\sqrt2}(1-\mu(k))-(2(1+\epsilon_3 (\alpha)+\sqrt2(1+\epsilon_3 (\alpha))e^{-\pi y(\alpha-\frac{1}{2\alpha})}))e^{-\frac{\pi y}{2}(\alpha+\frac{1}{\alpha})}
(1+\mu(k))\Big).
\endaligned\end{equation}
Let
$$
d(\alpha):=(2(1+\epsilon_3 (\alpha)+\sqrt2(1+\epsilon_3 (\alpha))e^{-\pi y(\alpha-\frac{1}{2\alpha})}))e^{-\frac{\pi y}{2}(\alpha+\frac{1}{\alpha})}
$$
and

\begin{equation}\aligned\label{AAAG}
G(k):=\frac{\frac{1}{2\sqrt2}-d(\alpha)}{\frac{1}{2\sqrt2}+d(\alpha)}-\mu(k).
\endaligned\end{equation}
Then
\begin{equation}\aligned\nonumber
 \frac{1}{2\sqrt2}\underline\vartheta(\frac{y}{2\alpha})
-2(1+\epsilon_3 (\alpha))e^{-\frac{\pi\alpha y}{2}}\overline\vartheta(\frac{y}{\alpha})
-{\sqrt2}(1+\epsilon_4 (\alpha))e^{-\frac{3\pi\alpha y}{2}}\overline\vartheta(\frac{y}{2\alpha})
\geq\frac{4\pi e^{-\pi\frac{y}{2\alpha}}}{\frac{1}{2\sqrt2}+d(\alpha)}G(k).
\endaligned\end{equation}
Since $y\geq\frac{\sqrt{15}}{4}$, then $d(\alpha)\leq0.11025(\alpha\geq1)$ and $G'(k)=-\mu'(k)>0$. Hence numerically
$$G(k)>0\; \hbox{if}\; k>k_G:=0.22263\cdots.$$

 Therefore
\begin{equation}\aligned\nonumber
&\frac{1}{2\sqrt2}\underline\vartheta(\frac{y}{2\alpha})
-2(1+\epsilon_3(\alpha))e^{-\frac{\pi\alpha y}{2}}\overline\vartheta(\frac{y}{\alpha})
-{\sqrt2}(1+\epsilon_4(\alpha))e^{-\frac{3\pi\alpha y}{2}}\overline\vartheta(\frac{y}{2\alpha})\\
\geq&\frac{4\pi e^{-\pi\frac{y}{2\alpha}}}{\frac{1}{2\sqrt2}+d(\alpha)}G(k)
>0 \;\hbox{if}\;k>k_G.
\endaligned\end{equation}
This gives the second range of $k$ to be selected, i.e., $k>k_G(=0.22263\cdots)$.
\vskip0.1in

\noindent
{\bf Case $\mathbf{C}: \frac{y}{2\alpha}\in(0,k],\frac{y}{\alpha}\in(k,\infty)$.}
In this case, we first set $k\in(k_G,k_F)=(0.22263\cdots,0.28689\cdots)$ by the computations in subcases D and E above.
In using the comparison inequality later, we further set that
\begin{equation}\aligned\nonumber
k\in(k_A,k_B]
\endaligned\end{equation}
where
\begin{equation}\aligned\nonumber
k_A:=\max\{k_g, k_G\},\;\; k_B:=\min\{k_f, k_F, \frac{1}{4}\}.
\endaligned\end{equation}
Then, $k\in(k_1,k_2]=(0.22263\cdots,\frac{1}{4}]$. On the other hand, since $\frac{y}{2\alpha}\in(0,k]$, then $\alpha\geq\frac{y}{2k}$.
Again the bound $\frac{1}{4}$ is taken for controlling
\begin{equation}\aligned\nonumber
\frac{1}{\max_{x\in[0,k]}{\sin(2\pi x)}}\geq1
\endaligned\end{equation}
provided that $k\in(0,k_B]$.

By Lemma \ref{LemmaT3}
\begin{equation}\aligned\label{Feee1}
& \frac{1}{2\sqrt2}\underline\vartheta(\frac{y}{2\alpha})
-2(1+\epsilon_3 1(\alpha))e^{-\frac{\pi\alpha y}{2}}\overline\vartheta(\frac{y}{\alpha})
-{\sqrt2}(1+\epsilon_4 (\alpha))e^{-\frac{3\pi\alpha y}{2}}\overline\vartheta(\frac{y}{2\alpha})\\
\geq&\frac{1}{2\sqrt2}(\frac{y}{2\alpha})^{-\frac{3}{2}}\frac{1}{\max_{x\in[0,k]}{\sin(2\pi x)}}\big(
e^{-\pi k^2\frac{2\alpha}{y}}k-e^{-\pi(1-k)^2\frac{2\alpha}{y}}(1-k)
\big)\\
&\;\;\;\;-2(1+\epsilon_3 (\alpha)) e^{-\frac{\pi \alpha y}{2}}(1+\mu(k))4\pi e^{-\pi\frac{y}{\alpha}}
-{\sqrt2}(1+\epsilon_4 (\alpha))e^{-\frac{3\pi\alpha y}{2}}(\frac{y}{2\alpha})^{-\frac{3}{2}}\\
\geq&
\frac{1}{2\sqrt2}(k-\epsilon_k)(\frac{y}{2\alpha})^{-\frac{3}{2}}e^{-\pi k^2\frac{2\alpha}{y}}
-2(1+\epsilon_3 (\alpha))(1+\mu(k))4\pi e^{-\frac{\pi\alpha y}{2}} e^{-\pi\frac{y}{\alpha}}\\
&\;\;\;\;-\sqrt2(1+\epsilon_4 (\alpha))e^{-\frac{3\pi\alpha y}{2}}(\frac{y}{2\alpha})^{-\frac{3}{2}}\\
=&
(k-\epsilon_k)(\frac{y}{\alpha})^{-\frac{3}{2}}e^{-\pi k^2\frac{2\alpha}{y}}
-2(1+\epsilon_3 (\alpha))(1+\mu(k))4\pi e^{-\frac{\pi\alpha y}{2}} e^{-\pi\frac{y}{\alpha}}\\
&\;\;\;\;-\sqrt2(1+\epsilon_4 (\alpha))e^{-\frac{3\pi\alpha y}{2}}(\frac{y}{2\alpha})^{-\frac{3}{2}}.\\
\endaligned\end{equation}
Our method works for all $k\in(k_A,k_B]$, to make the structure clear, we set that
\begin{equation}\aligned\nonumber
k=k_B=\frac{1}{4}.
\endaligned\end{equation}
Since $\frac{y}{2\alpha}\in(0,k]$ and $y\geq\frac{\sqrt{15}}{4}$, one has
\begin{equation}\aligned\nonumber
\alpha\geq\frac{y}{2k}=2y\geq\frac{\sqrt{15}}{2}.
\endaligned\end{equation}
To simplify the lower bounds in \eqref{Feee1},  we use the following
\begin{equation}\aligned\nonumber
\alpha y=\alpha^2 \frac{y}{\alpha}\geq\frac{15}{4}\frac{y}{\alpha}.
\endaligned\end{equation}

Continuing from \eqref{Feee1}, one has
\begin{equation}\aligned\label{Feee2}
& \frac{1}{2\sqrt2}\underline\vartheta(\frac{y}{2\alpha})
-2(1+\epsilon_3(\alpha))e^{-\frac{\pi\alpha y}{2}}\overline\vartheta(\frac{y}{\alpha})
-{\sqrt2}(1+\epsilon_4(\alpha))e^{-\frac{3\pi\alpha y}{2}}\overline\vartheta(\frac{y}{2\alpha})\\
\geq&
(\frac{1}{4}-\frac{3}{500})(\frac{y}{\alpha})^{-\frac{3}{2}}e^{-\frac{\pi}{2}\frac{\alpha}{y}}
-2(1+\epsilon_3(\alpha))(1+\mu(\frac{1}{4}))4\pi  e^{-\frac{23}{8}\pi\frac{y}{\alpha}}\\
&\;\;\;\;-\sqrt2(1+\epsilon_4 (\alpha))e^{-\frac{45\pi\alpha y}{8}}(\frac{y}{2\alpha})^{-\frac{3}{2}}.
\endaligned\end{equation}
Let
\begin{equation}\aligned\nonumber
\gamma=\frac{y}{\alpha}.
\endaligned\end{equation}
Then in this case
\begin{equation}\aligned\nonumber
\gamma=\frac{y}{\alpha}\in(k,2k]=(\frac{1}{4},\frac{1}{2}].
\endaligned\end{equation}
Let
\begin{equation}\aligned\label{AAAH}
H(\gamma):=\mathcal{C}_1\gamma^{-\frac{3}{2}}e^{-\frac{\pi}{2}\frac{1}{\gamma}}
-\mathcal{C}_2 e^{-\frac{23}{8}\pi\gamma}
-\mathcal{C}_3 e^{-\frac{45\pi }{8}\gamma}(\frac{\gamma}{2})^{-\frac{3}{2}},
\endaligned\end{equation}
here
\begin{equation}\aligned\nonumber
\mathcal{C}_1&=\frac{1}{4}-\frac{3}{500}\\
\mathcal{C}_2&=2(1+\epsilon_3(\alpha))(1+\mu(\frac{1}{4}))4\pi\\
\mathcal{C}_3&=\sqrt2(1+\epsilon_4(\alpha)).
\endaligned\end{equation}

By \eqref{Feee2},
\begin{equation}\aligned\nonumber
\frac{1}{2\sqrt2}\underline\vartheta(\frac{y}{2\alpha})
-2(1+\epsilon_3 (\alpha))e^{-\frac{\pi\alpha y}{2}}\overline\vartheta(\frac{y}{\alpha})
-{\sqrt2}(1+\epsilon_4 (\alpha))e^{-\frac{3\pi\alpha y}{2}}\overline\vartheta(\frac{y}{2\alpha})
\geq H(\gamma), \gamma=\frac{y}{\alpha}.
\endaligned\end{equation}
Then, it suffices to prove that
\begin{equation}\aligned\nonumber
H(\gamma)>0,\;\; \gamma\in(\frac{1}{4},\frac{1}{2}].
\endaligned\end{equation}
The first part of the function $H(\gamma)$ was studied in \eqref{hgamma}, the second and third parts of the function are decreasing, thus $H(\gamma)$ is increasing on $(\frac{1}{4},\frac{1}{2}]$, hence
\begin{equation}\aligned\nonumber
H(\gamma)>H(\frac{1}{4}),\;\; \gamma\in(\frac{1}{4},\frac{1}{2}].
\endaligned\end{equation}
The numerical result shows that
\begin{equation}\aligned\nonumber
H(\frac{1}{4})>0.
\endaligned\end{equation}
Thus we complete the proof of case C.

In summary of cases A, B and C, we have
\begin{equation}\aligned\label{KeyF}
&\frac{1}{2\sqrt2}\underline\vartheta(\frac{y}{2\alpha})
-2(1+\epsilon_3(\alpha))e^{-\frac{\pi\alpha y}{2}}\overline\vartheta(\frac{y}{\alpha})
-{\sqrt2}(1+\epsilon_4(\alpha))e^{-\frac{3\pi\alpha y}{2}}\overline\vartheta(\frac{y}{2\alpha})\\
\geq& (\frac{y}{2\alpha})^{-\frac{3}{2}}e^{-\pi k^2\frac{2\alpha}{y}}
F(k)>0, \hbox{if}\;\frac{y}{\alpha}\in(0,k]\;, k\in(0,k_F),k_F=0.28689\cdots\\
\geq& \frac{4\pi e^{-\pi\frac{y}{2\alpha}}}{\frac{1}{2\sqrt2}+d(\alpha)}G(k)
>0 \;\hbox{if}\;k>k_G, k_G=0.22263\cdots\\
\geq& H(\gamma)>0,\gamma=\frac{y}{\alpha},\hbox{if}\;\frac{y}{2\alpha}\in(0,k],\frac{y}{\alpha}\in(k,\infty),k=\frac{1}{4},
\endaligned
\end{equation}
see the functions $F(k), G(k), H(\gamma)$ in \eqref{AAAF}, \eqref{AAAG} and \eqref{AAAH} respectively.
Therefore, by selecting $k=\frac{1}{4}$ in \eqref{KeyF}, one has
\begin{equation}\aligned\nonumber
\frac{1}{2\sqrt2}\underline\vartheta(\frac{y}{2\alpha})
-2(1+\epsilon_3 (\alpha))e^{-\frac{\pi\alpha y}{2}}\overline\vartheta(\frac{y}{\alpha})
-{\sqrt2}(1+\epsilon_4 (\alpha))e^{-\frac{3\pi\alpha y}{2}}\overline\vartheta(\frac{y}{2\alpha})
>0.
\endaligned
\end{equation}
The proof of (\ref{HardBB}) is complete.

\qed

\section{proof of Theorem \ref{Th2} and Theorem \ref{Th3}}\label{Theorem2}
\setcounter{equation}{0}

In this section, we study the monotonicity properties of $\mathcal{W}_{\kappa}(z;\alpha)$ on the imaginary axis $z=iy, y\geq 1$ and prove Theorems \ref{Th2} and \ref{Th3}.

Recall that from the definition of  $\mathcal{W}_{\kappa}(z;\alpha)$ we have
 \begin{equation}\aligned\nonumber
\mathcal{W}_{0}(iy;\alpha)&=\sum_{n,m}e^{-\frac{\pi \alpha}{2}\big((2n+1)^2y+\frac{(2m+1)^2}{y}\big)}+\sum_{n,m}e^{-2\pi \alpha(n^2y+\frac{m^2}{y})}\\
\mathcal{W}_{1}(iy;\alpha)&=\sum_{n,m}e^{-\pi \alpha(n^2y+\frac{m^2}{y})}.
 \endaligned\end{equation}

Here for convenience, throughout this section, we denote  $
\sum_{n,m}=\sum_{n=-\infty}^{n=\infty}\sum_{m=-\infty}^{m=\infty}.
$

In this section, we aim to establish that

\begin{theorem}\label{4th1} For $\sigma \in[0,2]$, the function $\mathcal{W}_{1}(iy;\alpha)+\sigma \mathcal{W}_{0}(iy;\alpha), y\in(0,\infty)$ achieves its unique minimum
at $y=1$.
\end{theorem}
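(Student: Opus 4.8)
The plan is to prove something slightly stronger — that the conclusion holds for \emph{every} $\sigma\ge 0$ — by a direct term-by-term symmetrization argument; the restriction $\sigma\le 2$ in the statement is not needed for the proof itself, only for its application to Theorem~\ref{Th2} (where $\sigma=(1-\kappa)/\kappa$ with $\kappa\in[\tfrac13,1]$, together with the dual range coming from $\Omega_{eb}$, forces $\kappa\in[\tfrac13,\tfrac23]$). First I would rewrite both functionals on the imaginary axis as plain lattice theta sums over $\mathbb{Z}^2$. For $\mathcal{W}_1$ this is immediate, $\mathcal{W}_1(iy;\alpha)=\sum_{(a,b)\in\mathbb{Z}^2}e^{-\pi\alpha\,(a^2 y+b^2 y^{-1})}$ up to the treatment of the origin, which only shifts the value by an irrelevant additive constant. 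For $\mathcal{W}_0$, using $|m(iy+1)+2n|^2=(m+2n)^2+m^2y^2$ and substituting $(a,b)=(m,\,m+2n)$ turns the sum into $\sum e^{-\frac{\pi\alpha}{2}(a^2 y+b^2 y^{-1})}$ over the index set $\{(a,b)\in\mathbb{Z}^2:\ a\equiv b\pmod 2\}$; splitting by parity recovers exactly the two theta sums displayed just before Theorem~\ref{4th1}, the ``odd'' sum $\sum_{n,m}e^{-\frac{\pi\alpha}{2}((2n+1)^2y+(2m+1)^2y^{-1})}$ and the ``even'' sum $\sum_{n,m}e^{-2\pi\alpha(n^2y+m^2y^{-1})}$. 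All of these series are positive and absolutely convergent, so their terms may be regrouped freely.

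The heart of the matter is a one-line comparison: for any $c>0$ and any index set $\mathcal{A}\subseteq\mathbb{Z}^2$ invariant under the swap $(a,b)\mapsto(b,a)$, the function $\Phi(y):=\sum_{(a,b)\in\mathcal{A}}e^{-c(a^2y+b^2y^{-1})}$ satisfies $\Phi(y)\ge\Phi(1)$ for all $y>0$, with equality iff $y=1$ (provided $\mathcal{A}$ contains a point other than the origin). I would prove this by partitioning $\mathcal{A}$ into orbits of the involution. A diagonal orbit $\{(a,a)\}$ contributes $e^{-ca^2(y+y^{-1})}$, uniquely minimized at $y=1$ when $a\neq0$ and constant when $a=0$. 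An off-diagonal orbit $\{(a,b),(b,a)\}$ contributes
\[
e^{-c(a^2y+b^2y^{-1})}+e^{-c(b^2y+a^2y^{-1})}\ \ge\ 2\,e^{-\frac{c}{2}(a^2+b^2)(y+y^{-1})}\ \ge\ 2\,e^{-c(a^2+b^2)},
\]
by AM--GM followed by $y+y^{-1}\ge 2$; since $a^2+b^2>0$, the second inequality is strict unless $y=1$. Summing over orbits gives the claim, and incidentally this removes the need to verify coercivity of $\Phi$ separately.

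Applying this with $c=\pi\alpha$ to $\mathcal{W}_1(iy;\alpha)$, with $c=\frac{\pi\alpha}{2}$ to the odd part of $\mathcal{W}_0(iy;\alpha)$, and with $c=2\pi\alpha$ to its even part, each of these functions of $y$ attains its unique minimum at $y=1$. For any $\sigma\ge 0$ the function $\mathcal{W}_1(iy;\alpha)+\sigma\,\mathcal{W}_0(iy;\alpha)$ is a nonnegative linear combination of them, hence also has its unique minimum at $y=1$; restricting to $\sigma\in[0,2]$ yields Theorem~\ref{4th1}. The only steps that require care are the parity reduction of $\mathcal{W}_0$ and the bookkeeping of equality cases, and I would not expect a genuine analytic obstacle here: the sharp theta estimates of Section~\ref{Theorem1} (Lemma~\ref{LemmaT3} and the $\overline{\vartheta},\underline{\vartheta}$ bounds) are \emph{not} needed for this statement. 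They would become unavoidable only for the analogue on the imaginary axis of the $s$-derivatives of the Eisenstein series in Theorem~\ref{Th4}, where the factors $(\log\tfrac{y}{a^2y^2+b^2})^j$ attached to each term destroy the positivity on which the AM--GM step here relies.
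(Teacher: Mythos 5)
Your key ``one-line comparison'' is false, and the error is the direction of the second inequality. For an off-diagonal orbit you write
\[
e^{-c(a^2y+b^2y^{-1})}+e^{-c(b^2y+a^2y^{-1})}\;\ge\;2e^{-\frac{c}{2}(a^2+b^2)(y+y^{-1})}\;\ge\;2e^{-c(a^2+b^2)},
\]
but since $y+y^{-1}\ge 2$ and the exponent carries a minus sign, the correct relation is $e^{-\frac{c}{2}(a^2+b^2)(y+y^{-1})}\le e^{-c(a^2+b^2)}$: the AM--GM step produces a lower bound that lies \emph{below} the orbit's value at $y=1$, so nothing about $\Phi(y)\ge\Phi(1)$ follows. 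Worse, the lemma itself is false for exactly the index set you need. Take $\mathcal{A}=\{(a,b):a,b\ \mathrm{odd}\}$ and $c=\tfrac{\pi\alpha}{2}$, i.e.\ the odd part of $\mathcal{W}_0(iy;\alpha)$: it factors as $\theta_{\mathrm{odd}}(\tfrac{\alpha y}{2})\,\theta_{\mathrm{odd}}(\tfrac{\alpha}{2y})$ with $\theta_{\mathrm{odd}}(t)=\sum_n e^{-\pi(2n+1)^2t}$, and as $y\to\infty$ the first factor decays like $2e^{-\pi\alpha y/2}$ while the second grows only like $\tfrac12(2y/\alpha)^{1/2}$, so the sum tends to $0$, strictly below its value at $y=1$; its infimum is not even attained. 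Consequently your claimed strengthening to all $\sigma\ge0$ is also false: $\mathcal{W}_0(i\sqrt3;\alpha)<\mathcal{W}_0(i;\alpha)$ (the shifted sum is minimized at the centered $\sqrt3:1$ rectangle, i.e.\ the hexagonal configuration; cf.\ Lemma \ref{ML} and the $\kappa=0$ case of Theorem \ref{Th3}), so for $\sigma$ large the minimum of $\mathcal{W}_1+\sigma\mathcal{W}_0$ on the imaginary axis moves away from $y=1$ --- this is precisely the rectangular phase $\kappa<\kappa_1$ of the paper, and it is why the bound $\sigma\le2$ (the paper's $\sigma_M$) is quantitative rather than cosmetic.

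Because of this genuine competition, no term-by-term (orbit-by-orbit) positivity argument can work; even the statement that $\mathcal{W}_1(iy;\alpha)$ alone is minimized at $y=1$ requires Montgomery's argument, not AM--GM. The paper's route is different in kind: the symmetry $y\mapsto 1/y$ from \eqref{JJJ1} forces the $y$-derivative to vanish at $y=1$, and one then proves $\mathcal{L}\big(\mathcal{W}_1(iy;\alpha)+\sigma\mathcal{W}_0(iy;\alpha)\big)\ge0$ with $\mathcal{L}=\partial_y^2+\tfrac{2}{y}\partial_y$, using Montgomery's Lemma \ref{ML} for $y\ge\sqrt3$ and the delicate Lemma \ref{Lemma43} on $[1,\sqrt3]$, where the negative contribution of the odd part (the term you tried to minimize separately) must be absorbed by the diagonal terms of $\mathcal{W}_1$, with the admissible size of $\sigma$ emerging from that balance. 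Your parity reduction of $\mathcal{W}_0$ and the diagonal-orbit observation are fine, but the core of the proof is missing.
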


As a consequence, by the Bernstein Theorem \cite{Bernstein}, we have
\begin{theorem}\label{4th0} For $\sigma \in[0,2]$, the function $\mathcal{E}_{1}(iy;\alpha)+\sigma \mathcal{E}_{0}(iy;\alpha), y\in(0,\infty)$ achieves its unique  minimum
at $y=1$.
\end{theorem}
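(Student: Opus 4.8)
The plan is to obtain Theorem~\ref{4th0} from Theorem~\ref{4th1} by integrating the fibrewise minimality at $y=1$ against the Bernstein representing measure of $\mathcal{F}$; once Theorem~\ref{4th1} is granted, this is a soft measure-theoretic step with no genuine obstacle. (The symbol $\alpha$ appearing in the statement is vestigial: $\mathcal{E}_0,\mathcal{E}_1$ carry no $\alpha$, and the assertion to prove is that for each $\sigma\in[0,2]$ the map $y\mapsto \mathcal{E}_1(iy)+\sigma\,\mathcal{E}_0(iy)$ on $(0,\infty)$ is minimal only at $y=1$.) By the Bernstein representation \eqref{Bern} there is a nonnegative finite Borel measure $\lambda$ on $[0,\infty)$, the representing measure of the completely monotone function $\mathcal{F}$, with $\mathcal{E}_\kappa(z)=\int_0^\infty\mathcal{W}_\kappa(z;\alpha)\,d\lambda(\alpha)$. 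Since each $\mathcal{E}_\kappa(iy)$ is finite by hypothesis, these integrals converge, and linearity gives, for $\sigma\in[0,2]$,
\[
\mathcal{E}_1(iy)+\sigma\,\mathcal{E}_0(iy)=\int_0^\infty\big(\mathcal{W}_1(iy;\alpha)+\sigma\,\mathcal{W}_0(iy;\alpha)\big)\,d\lambda(\alpha).
\]

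Fix $\sigma\in[0,2]$ and set $\psi_\alpha(y):=\mathcal{W}_1(iy;\alpha)+\sigma\,\mathcal{W}_0(iy;\alpha)$; for each fixed $y$ this is an absolutely convergent sum of exponentials in $\alpha$, hence real-analytic in $\alpha>0$ and in particular $\lambda$-measurable. Theorem~\ref{4th1} says precisely that for every $\alpha>0$ we have $\psi_\alpha(y)\ge\psi_\alpha(1)$ for all $y\in(0,\infty)$, with equality exactly at $y=1$. Integrating this inequality (pointwise in $\alpha$) against $d\lambda$ and invoking the displayed identity at $y$ and at $1$ yields
\[
\mathcal{E}_1(iy)+\sigma\,\mathcal{E}_0(iy)\ \ge\ \mathcal{E}_1(i)+\sigma\,\mathcal{E}_0(i)\qquad\text{for all }y\in(0,\infty),
\]
so $y=1$ is a global minimizer.

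It remains to rule out a second minimizer. Suppose $\mathcal{E}_1(iy_0)+\sigma\,\mathcal{E}_0(iy_0)=\mathcal{E}_1(i)+\sigma\,\mathcal{E}_0(i)$ for some $y_0\ne1$. Subtracting the representation at $y_0$ from that at $1$ gives
\[
0=\int_0^\infty\big(\psi_\alpha(y_0)-\psi_\alpha(1)\big)\,d\lambda(\alpha),
\]
in which, by Theorem~\ref{4th1}, the integrand is $\ge0$ everywhere and $>0$ for every $\alpha>0$ (this is where $y_0\ne1$ enters). A nonnegative integrand that is strictly positive on all of $(0,\infty)$ can integrate to zero against $\lambda$ only if $\lambda\big((0,\infty)\big)=0$, i.e. only if $\mathcal{F}\equiv\lambda(\{0\})$ is constant. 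But in the strict sense of complete monotonicity used in this paper $-\mathcal{F}'(x)>0$, so $\mathcal{F}$ is strictly decreasing and non-constant; equivalently $\lambda\big((0,\infty)\big)>0$ (and, since the lattice sums converge, $\lambda(\{0\})=0$). This contradiction forces $y_0=1$, completing the proof of Theorem~\ref{4th0}.

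The only thing to watch in this deduction is the measurability/integrability bookkeeping, which is immediate; there is no analytic difficulty here. All of the real work lives in Theorem~\ref{4th1}, which is established separately: one expands $\mathcal{W}_0(iy;\alpha)$ and $\mathcal{W}_1(iy;\alpha)$ in one-dimensional theta functions (as in \eqref{PPP3}) and controls the sign of $\frac{d}{dy}\big(\mathcal{W}_1(iy;\alpha)+\sigma\,\mathcal{W}_0(iy;\alpha)\big)$ on $(0,1)$ and $(1,\infty)$, with the Fourier scaling \eqref{W124} reducing that analysis to $\alpha\ge1$. Proving Theorem~\ref{4th0} without Theorem~\ref{4th1} would require those theta-function estimates uniformly in $\alpha$ before integrating, which is strictly harder than the fibrewise route, so I would not attempt it that way.
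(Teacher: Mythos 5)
Your proposal is correct and follows exactly the paper's route: Theorem \ref{4th0} is deduced from Theorem \ref{4th1} by integrating the fibrewise inequality $\mathcal{W}_1(iy;\alpha)+\sigma\mathcal{W}_0(iy;\alpha)\geq \mathcal{W}_1(i;\alpha)+\sigma\mathcal{W}_0(i;\alpha)$ against the Bernstein representing measure in \eqref{Bern}. You in fact supply more detail than the paper (which states the result simply as a consequence of the Bernstein theorem), in particular the correct observation that uniqueness of the minimizer follows because strict complete monotonicity forces $\lambda\big((0,\infty)\big)>0$.
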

\vskip0.1in

We first recall the duality property of $\mathcal{W}_\kappa$, which can be proved by Lemma \ref{LG1} and we omit the details here.
\begin{lemma}
\label{l-dual}
Under the transform $z \rightarrow w = \frac{z-1}{z+1}$ of $\mathbb{H}$,
\[ \mathcal{W}_1(z) = \mathcal{W}_0 (w) \ \ \mbox{and} \ \ \mathcal{W}_0(z) = \mathcal{W}_1(w), \ \ z \in \mathbb{H} \ \ \mbox{and} \ \
 w=\frac{z-1}{z+1} \in \mathbb{H}. \]
Consequently, for all $b \in \mathbb{R}$,
\[ \mathcal{W}_b(w) = \mathcal{W}_{1-b} (z), \ \  z \in \mathbb{H} \ \ \mbox{and} \ \
 w=\frac{z-1}{z+1} \in \mathbb{H}. \]

 More generally, if $h: z' \rightarrow w'= \frac{z'-1}{z'+1}$ and
 $g_1: z \rightarrow z'$, $g_2: w' \rightarrow w$ are transforms in ${\mathcal{G}_2}$,
 then
 \[ \mathcal{W}_b(w) = \mathcal{W}_{1-b} (z), \ \  z \in \mathbb{H} \ \ \mbox{and} \ \
 w=g_2 \circ h \circ g_1 (z) \in \mathbb{H}.\]
\end{lemma}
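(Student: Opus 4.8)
The plan is to reduce every assertion to the single theta-type function $\mathcal{W}_1$ and to its modular invariance. The one ingredient that reflects how $\mathcal{W}_0$ is built from a half-lattice shift is the rescaling identity
\[
\mathcal{W}_0(z;\alpha)=\mathcal{W}_1\!\bigl(\tfrac{z+1}{2};\alpha\bigr),\qquad z\in\mathbb{H},
\]
which I would read straight off the definition \eqref{Wzalphadef}: substituting $m(z+1)+2n=2\bigl(m\tfrac{z+1}{2}+n\bigr)$ and $\Im\tfrac{z+1}{2}=\tfrac12\Im(z)$, the factor $4$ from the squared modulus exactly cancels the factor $\tfrac14$ from the reciprocal imaginary part. (A one-line check shows that $\tfrac{z-1}{z+1}$, $\tfrac{z}{z+1}$ and $\tfrac{z+1}{2}$ all lie in $\mathbb{H}$ when $z\in\mathbb{H}$.) Recall also that $\mathcal{W}_\kappa(\cdot;\alpha)$ is the Gaussian special case of $\mathcal{E}_\kappa$, so it inherits the invariances of Lemma \ref{LG1}: $\mathcal{W}_1$ is $\mathcal{G}_1$-invariant while $\mathcal{W}_0$ is $\mathcal{G}_2$-invariant.

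With this, the first identity $\mathcal{W}_1(z)=\mathcal{W}_0(w)$, $w=\tfrac{z-1}{z+1}$, becomes purely modular: since $\tfrac{w+1}{2}=\tfrac12\bigl(\tfrac{z-1}{z+1}+1\bigr)=\tfrac{z}{z+1}$, the rescaling identity gives $\mathcal{W}_0(w)=\mathcal{W}_1\!\bigl(\tfrac{z}{z+1}\bigr)$, and $z\mapsto\tfrac{z}{z+1}$ is the M\"obius action of $(\begin{smallmatrix}1&0\\1&1\end{smallmatrix})\in SL_2(\mathbb{Z})\subset\mathcal{G}_1$, so the right side equals $\mathcal{W}_1(z)$ by Lemma \ref{LG1}. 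For the second identity $\mathcal{W}_0(z)=\mathcal{W}_1(w)$ I would avoid redoing the algebra: apply the first identity with $z$ replaced by $w$ to get $\mathcal{W}_1(w)=\mathcal{W}_0\bigl(h(w)\bigr)$ where $h(\zeta)=\tfrac{\zeta-1}{\zeta+1}$, note $h\bigl(h(z)\bigr)=-\tfrac{1}{z}$, and apply the $\mathcal{G}_2$-invariance of $\mathcal{W}_0$ (since $\zeta\mapsto-1/\zeta\in\mathcal{G}_2$) to obtain $\mathcal{W}_1(w)=\mathcal{W}_0(-1/z)=\mathcal{W}_0(z)$. The ``consequently'' clause is then immediate from the affine decomposition $\mathcal{W}_b=(1-b)\mathcal{W}_0+b\mathcal{W}_1$, valid for all $b\in\mathbb{R}$ by \eqref{Wzalphadef}: $\mathcal{W}_b(w)=(1-b)\mathcal{W}_1(z)+b\mathcal{W}_0(z)=\mathcal{W}_{1-b}(z)$.

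For the last, more general assertion, the key point is that $\mathcal{W}_b$ is $\mathcal{G}_2$-invariant for every $b\in\mathbb{R}$, being an affine combination of $\mathcal{W}_0$ ($\mathcal{G}_2$-invariant) and $\mathcal{W}_1$ (even $\mathcal{G}_1$-invariant, and $\mathcal{G}_2\subseteq\mathcal{G}_1$). Writing $w=g_2\circ h\circ g_1(z)$ with $g_1,g_2\in\mathcal{G}_2$, one has $\mathcal{W}_b(w)=\mathcal{W}_b\bigl(h(g_1(z))\bigr)$ by $\mathcal{G}_2$-invariance, $=\mathcal{W}_{1-b}\bigl(g_1(z)\bigr)$ by the duality applied at the point $g_1(z)$, $=\mathcal{W}_{1-b}(z)$ by $\mathcal{G}_2$-invariance again. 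I do not expect any genuine obstacle; the only point requiring care is the bookkeeping of unimodularity --- the individual maps $z\mapsto\tfrac{z-1}{z+1}$ and $z\mapsto\tfrac{z+1}{2}$ have determinant $2$ and do not lie in $\mathcal{G}_1$, but the rescaling identity $\mathcal{W}_0(z)=\mathcal{W}_1(\tfrac{z+1}{2})$ absorbs that factor of $2$, leaving only integral unimodular transformations to which Lemma \ref{LG1} applies.
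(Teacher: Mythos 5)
Your argument is correct, and it follows exactly the route the paper indicates: the paper states that Lemma \ref{l-dual} ``can be proved by Lemma \ref{LG1}'' and omits the details, and your proof is precisely that argument carried out, combining the rescaling identity $\mathcal{W}_0(z;\alpha)=\mathcal{W}_1(\tfrac{z+1}{2};\alpha)$ with the $\mathcal{G}_1$-, respectively $\mathcal{G}_2$-invariance from Lemma \ref{LG1} and the affine decomposition $\mathcal{W}_b=b\mathcal{W}_1+(1-b)\mathcal{W}_0$. The verifications ($\tfrac{w+1}{2}=\tfrac{z}{z+1}$, $h\circ h(z)=-1/z$, and the $\mathcal{G}_2$-invariance of every $\mathcal{W}_b$) are all accurate, so your write-up correctly supplies the details the paper leaves out.
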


\vskip0.2in
Let us write $z=x+yi$ henceforth, and set
\[ X_b(z) = \frac{\partial \mathcal{W}_b(z)}{\partial x} = b X_1(z) + (1-b) X_0(z), \ \
Y_b(z) = \frac{\partial \mathcal{W}_b(z)}{\partial y}= b Y_1(z) + (1-b) Y_0(z). \]

The following two formulas relate
  $f_b$ on the upper half of the unit circle
  to $f_{1-b}$ on the upper half of the imaginary axis.

  \begin{lemma}
    \label{l-circle-im}
    Let the upper half of the unit circle be parametrized by
    $u+ i \sqrt{1-u^2}$, $u \in (-1,1)$. Then
    $\frac{\sqrt{1-u^2}}{1-u} i$ parametrizes the upper half of the
    imaginary axies, and
  \begin{align}
  \label{line-to-circle-X}
  X_b(u + i\sqrt{1-u^2}) &= \frac{\sqrt{1-u^2}}{1-u}  Y_{1-b}
  \Big(\frac{\sqrt{1-u^2}}{1-u} i \Big) \\  \label{line-to-circle-Y}
  Y_b(u + i \sqrt{1-u^2}) &= \frac{-u}{1-u}  Y_{1-b}
  \Big(\frac{\sqrt{1-u^2}}{1-u} i\Big)
  \end{align}
  hold for $u \in (-1,1)$.
    \end{lemma}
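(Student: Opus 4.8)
The plan is to obtain Lemma~\ref{l-circle-im} by differentiating the duality identity of Lemma~\ref{l-dual}, exploiting that the underlying Möbius map is holomorphic and that $\mathcal{W}_\kappa$ is even in $\Re(z)$.

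First I would introduce the inverse Cayley map $\phi(w)=\frac{1+w}{1-w}$, the inverse of $z\mapsto\frac{z-1}{z+1}$. It maps $\mathbb{H}$ into $\mathbb{H}$ since $\Im\phi(w)=\frac{2\Im w}{|1-w|^2}$, and in terms of it Lemma~\ref{l-dual} reads $\mathcal{W}_b(w)=\mathcal{W}_{1-b}(\phi(w))$ for $w\in\mathbb{H}$. A direct computation (multiply numerator and denominator by $\overline{1-w}$, using $|1-w|^2=2(1-u)$ when $w=u+i\sqrt{1-u^2}$) gives $\phi(u+i\sqrt{1-u^2})=\frac{\sqrt{1-u^2}}{1-u}\,i$; as $u$ ranges over $(-1,1)$ this sweeps out all of $(0,\infty)$, so $\phi$ carries the upper unit circle onto the upper imaginary axis and produces precisely the stated parametrization.

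Next I would differentiate $\mathcal{W}_b=\mathcal{W}_{1-b}\circ\phi$ by the chain rule. Because $\phi$ is holomorphic with $\phi'(w)=\frac{2}{(1-w)^2}$, its real Jacobian at $w$ is multiplication by $\phi'(w)=a+ib$, i.e. the matrix with rows $(a,-b)$ and $(b,a)$; transposing gives
\[
X_b(w)=a\,X_{1-b}(z)+b\,Y_{1-b}(z),\qquad
Y_b(w)=-b\,X_{1-b}(z)+a\,Y_{1-b}(z),\qquad z=\phi(w).
\]
Now specialize to $w=u+i\sqrt{1-u^2}$, so that $z=\phi(w)$ lies on the imaginary axis. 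Since $z\mapsto-\overline z$ lies in $\mathcal{G}_2$ and $\mathcal{W}_\kappa$ is $\mathcal{G}_2$-invariant (Lemma~\ref{LG1}), $\mathcal{W}_{1-b}$ is even in $\Re(z)$, hence $X_{1-b}(z)=\partial_x\mathcal{W}_{1-b}(z)=0$ on the imaginary axis. Finally I would compute $\phi'(w)$ on the unit circle: expanding $(1-w)^2=-2(1-u)\,w$ and using $1/w=\overline w$ (valid since $|w|=1$) yields $\phi'(w)=\frac{-\overline w}{1-u}=\frac{-u}{1-u}+i\,\frac{\sqrt{1-u^2}}{1-u}$, so $a=\frac{-u}{1-u}$ and $b=\frac{\sqrt{1-u^2}}{1-u}$. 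Substituting $X_{1-b}(z)=0$ into the two identities above gives exactly \eqref{line-to-circle-X} and \eqref{line-to-circle-Y}.

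I do not expect a genuine obstacle here: the whole argument rests on the invariances already established (Lemmas~\ref{LG1} and~\ref{l-dual}) together with the elementary fact that a holomorphic change of variables acts on gradients through its complex derivative. The only point that requires care is the bookkeeping of the Cayley transform and of $\phi'$ on the unit circle — in particular, one must verify that the image of $u+i\sqrt{1-u^2}$ is $\frac{\sqrt{1-u^2}}{1-u}i$ rather than its reciprocal $\frac{\sqrt{1-u^2}}{1+u}i$ (which is what $z\mapsto\frac{z-1}{z+1}$, as opposed to its inverse $\phi$, produces), and that the signs in $\phi'(w)$ come out as above.
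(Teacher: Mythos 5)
Your proposal is correct and follows essentially the same route as the paper: differentiate the duality identity $\mathcal{W}_b(w)=\mathcal{W}_{1-b}(z)$ along the Cayley map, use the reflection invariance to get $X_{1-b}=0$ on the imaginary axis, and evaluate the Jacobian entries on the unit circle, which yield exactly $\frac{\sqrt{1-u^2}}{1-u}$ and $\frac{-u}{1-u}$. The only (cosmetic) difference is that you compute the Jacobian via the complex derivative $\phi'(w)=\frac{2}{(1-w)^2}$ and the Cauchy--Riemann relations, whereas the paper differentiates the explicit rational formulas for $x(u,v)$, $y(u,v)$ directly.
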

\begin{proof}
Consider the transform in Lemma \ref{l-dual}, $z \rightarrow w = \frac{z-1}{z+1}$.  With $z=x+yi$ and
$w=u+vi$,
\begin{equation}
  \label{key-transform}
  u = \frac{x^2+y^2-1}{(x+1)^2+y^2}, \ \ v=\frac{2y}{(x+1)^2+y^2}.
\end{equation}
Conversely,
\begin{equation}
  \label{key-transform-1}
  x = \frac{1-u^2-v^2}{(1-u)^2+v^2}, \ \ y=\frac{2v}{(1-u)^2+v^2}.
\end{equation}
Differentiate $f_b(w) = f_{1-b} (z)$ with respect to $u$ and $v$ to find
\begin{align*}
  &X_b(w) = X_{1-b}(z) \frac{\partial x}{\partial u} + Y_{1-b}(z)
  \frac{\partial y}{\partial u} \\
   & Y_b(w) = X_{1-b}(z) \frac{\partial x}{\partial v} + Y_{1-b}(z)
  \frac{\partial y}{\partial v}.
\end{align*}
When $w$ is on the unit circle, $z$ is on the imaginary axis. Since $\mathcal{W}_{1-b}$
is invariant under the reflection about the imaginary axis,
$X_{1-b}(z) =0$ on the imaginary axis. Also
\begin{equation}
   \frac{\partial y}{\partial u}\Big |_{|w|=1}  = \frac{\sqrt{1-u^2}}{1-u},
  \ \ \frac{\partial y}{\partial v}\Big |_{|w|=1}  = \frac{-u}{1-u}
\end{equation}
from which the lemma follows.
\end{proof}

 By Lemma \ref{LG1}, one has
  \begin{equation}\aligned\label{JJJ1}
\mathcal{W}_{1}(i\frac{1}{y};\alpha)+\sigma \mathcal{W}_{0}(i\frac{1}{y};\alpha)=\mathcal{W}_{1}(iy;\alpha)+\sigma \mathcal{W}_{0}(iy;\alpha).
 \endaligned\end{equation}
Then to prove Theorem \ref{4th1}, we will show that
\begin{lemma}\label{4lemma11} Assume that $\sigma\in[0,2]$. Then
  \begin{equation}\aligned\label{lemma111}
\frac{d}{dy}\Big(\mathcal{W}_{1}(iy;\alpha)+\sigma \mathcal{W}_{0}(iy;\alpha)
\Big)\geq0,\;\; \hbox{for}\;\; y\geq1, \alpha \geq 1.
 \endaligned\end{equation}

 \end{lemma}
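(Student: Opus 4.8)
\textit{Proof plan.} By \eqref{JJJ1} the function $y\mapsto\mathcal{W}_1(iy;\alpha)+\sigma\mathcal{W}_0(iy;\alpha)$ is invariant under $y\mapsto 1/y$, so \eqref{lemma111} is equivalent to the same inequality for $y\ge1$, which is what I would prove. The idea is to transfer the problem from the imaginary axis to the unit circle via Lemma \ref{l-circle-im}. Writing $u=\frac{y^2-1}{y^2+1}\in[0,1)$, so that $y=\frac{\sqrt{1-u^2}}{1-u}$ and $w:=u+i\sqrt{1-u^2}$ lies on the upper unit circle with $\Re w=u$, formula \eqref{line-to-circle-X} with $b=0$ (resp.\ $b=1$) gives $Y_1(iy;\alpha)=\frac{1-u}{\sqrt{1-u^2}}X_0(w;\alpha)$ and $Y_0(iy;\alpha)=\frac{1-u}{\sqrt{1-u^2}}X_1(w;\alpha)$, hence
\[
\frac{d}{dy}\Big(\mathcal{W}_1(iy;\alpha)+\sigma\mathcal{W}_0(iy;\alpha)\Big)=\frac{1-u}{\sqrt{1-u^2}}\Big(X_0(w;\alpha)+\sigma X_1(w;\alpha)\Big).
\]
Since the prefactor is positive, it suffices to show $X_0(w;\alpha)+\sigma X_1(w;\alpha)\ge0$ for every $w$ on the upper unit circle with $\Re w\in[0,1)$, every $\sigma\in[0,2]$ and every $\alpha\ge1$.

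The circle splits into two sub-arcs. On $\{|z|=1,\ \tfrac12\le\Re z<1\}$ one has $X_0(w;\alpha)\ge0$ and $X_1(w;\alpha)\ge0$: since $\sqrt{1-x^2}>\sqrt{x-x^2}$ for $x<1$, the open part of this sub-arc lies in $\Omega_{\mathcal{C}_2}\subset\mathcal{D}_{\mathcal{G}_2}$, and there the $\mathcal{W}$-level monotonicities of \cite{LW} underlying Theorem \ref{Lemma2} and Corollary \ref{Coro1} give the two signs (passing to the endpoints by continuity and the invariance $\mathcal{W}_1(1-\bar z)=\mathcal{W}_1(z)$); hence $X_0+\sigma X_1\ge0$ there for all $\sigma\ge0$. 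On the remaining sub-arc $\Omega_{eb}=\{|z|=1,\ 0\le\Re z\le\tfrac12\}$ one has instead $X_0(w;\alpha)\ge0$ (Corollary 4.2 of \cite{LW}) and $X_1(w;\alpha)\le0$ (its open part lies in $\Omega_{\mathcal{C}_1}$, with $X_1=0$ at both endpoints). Writing $X_0+\sigma X_1=(1+\sigma)X_{\kappa'}$ with $\kappa'=\frac{\sigma}{1+\sigma}\in[0,\tfrac23]$, and noting that $\kappa\mapsto X_\kappa=X_0+\kappa(X_1-X_0)$ is non-increasing on $\Omega_{eb}$, the whole matter reduces to the single case $\kappa'=\tfrac23$, i.e.\ to the key estimate
\[
2X_1(z;\alpha)+X_0(z;\alpha)\ge0,\qquad z=x+i\sqrt{1-x^2},\ x\in[0,\tfrac12],\ \alpha\ge1.
\]

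This key estimate is the heart of the proof, and I would establish it by the scheme of Section \ref{Theorem1}. From \eqref{J200}, $X_1(z;\alpha)=2\sqrt{y/\alpha}\sum_{n\ge1}ne^{-\alpha\pi y n^2}\partial_Y\vartheta(\tfrac y\alpha;Y)\big|_{Y=nx}$ and $X_0(z;\alpha)=\sqrt{\tfrac{y}{2\alpha}}\sum_{n\ge1}ne^{-\alpha\pi y n^2/2}\partial_Y\vartheta(\tfrac{y}{2\alpha};Y)\big|_{Y=n(x+1)/2}$; one keeps the $n=1$ terms as the principal part, controls the $n=2$ term and the tail $n\ge3$ by $|\partial_Y\vartheta(X;nY)|\le n|\sin2\pi Y|\,\overline\vartheta(X)$, and bounds the principal part below via $\underline\vartheta\le-\frac1{\sin2\pi Y}\partial_Y\vartheta\le\overline\vartheta$ (Lemma \ref{LemmaT3}), using that on the arc $\sin\pi x\ge0$, $\sin\pi(x+1)\le0$ and $y=\sqrt{1-x^2}\ge\frac{\sqrt3}{2}$. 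As in Lemmas \ref{HHHA}--\ref{HHHB} (treating $x\in[\tfrac14,\tfrac12]$ and $x\in[0,\tfrac14]$ separately), this reduces to a scalar inequality of the form
\[
\tfrac1{\sqrt2}\,\underline\vartheta\!\Big(\tfrac{y}{2\alpha}\Big)-C\big(1+\epsilon(\alpha)\big)e^{-\alpha\pi y/2}\,\overline\vartheta\!\Big(\tfrac{y}{\alpha}\Big)-\big(1+\epsilon'(\alpha)\big)\sqrt2\,e^{-3\alpha\pi y/2}\,\overline\vartheta\!\Big(\tfrac{y}{2\alpha}\Big)\ge0,
\]
with $\epsilon,\epsilon'$ exponentially small and $C=4\sqrt2$ (for $x\in[\tfrac14,\tfrac12]$) or $C=8$ (for $x\in[0,\tfrac14]$) — exactly twice the constant in \eqref{J1A1}, the extra factor coming from the weight $2$ on $X_1$. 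One then splits into the regimes $\mathbf{A}$: $\tfrac y\alpha,\tfrac{y}{2\alpha}\in(0,k]$; $\mathbf{B}$: $\tfrac y\alpha,\tfrac{y}{2\alpha}\in(k,\infty)$; $\mathbf{C}$: $\tfrac{y}{2\alpha}\in(0,k]$, $\tfrac y\alpha\in(k,\infty)$, for a free parameter $k\in(0,\tfrac12]$, substitutes the corresponding branch of $\underline\vartheta,\overline\vartheta$, and is left with an elementary one-variable inequality (in $\gamma=y/\alpha$) that is monotone on the relevant interval and positive at its endpoint, to be checked numerically as in \eqref{J1A1}, \eqref{J1B}, \eqref{hgamma}. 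The main obstacle is precisely this estimate: it is twice as tight as $X_1+X_0\ge0$ (that is, $X_{1/2}\ge0$) proved in Section \ref{Theorem1}, so the error bookkeeping and the choice of $k$ must be redone with care; but the margins are ample — the ratio $\underline\vartheta(\tfrac{y}{2\alpha})/(e^{-\alpha\pi y/2}\overline\vartheta(\tfrac y\alpha))$ already exceeds the required constant at $\alpha=1$ and grows like $e^{c\alpha}$ as $\alpha\to\infty$ — so no idea beyond the scheme of Section \ref{Theorem1} is needed.
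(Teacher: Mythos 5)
Your reduction steps are fine, and they follow a genuinely different route from the paper. The transfer identity $Y_1(iy;\alpha)+\sigma Y_0(iy;\alpha)=\sqrt{\tfrac{1-u}{1+u}}\bigl(X_0(w;\alpha)+\sigma X_1(w;\alpha)\bigr)$ is a correct use of Lemma \ref{l-circle-im}, the sign analysis on the two sub-arcs is right (your sub-arc $0\le\Re w\le\tfrac12$ corresponds exactly to $y\in[1,\sqrt3]$, the sub-arc $\tfrac12\le\Re w<1$ to $y\ge\sqrt3$, mirroring the paper's use of Montgomery's Lemma \ref{ML} there), and the monotonicity in $\kappa$ correctly reduces everything to the single estimate $2X_1+X_0\ge0$ on $\Omega_{eb}$. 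The paper instead stays on the imaginary axis: it applies the operator $\mathcal{L}$ of \eqref{Ldef}, uses \eqref{JJJ1} to get $\frac{d}{dy}\bigl(\mathcal{W}_1+\sigma\mathcal{W}_0\bigr)(iy;\alpha)\big|_{y=1}=0$, invokes Lemma \ref{ML} for $y\ge\sqrt3$, and proves the remaining second-derivative inequality on $[1,\sqrt3]$ (Lemma \ref{Lemma43}) via a monotone auxiliary function reduced to one numerical evaluation. So far the two routes are equivalent reformulations; everything hinges on your key estimate, which is precisely the content of the lemma at $\sigma=2$ restricted to $y\in[1,\sqrt3]$.

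The gap is that this key estimate is only asserted, and your claim that it follows from the Section \ref{Theorem1} scheme ``with ample margins'' is not correct. Doubling the weight of $X_1$ doubles the dominant negative term in the analogues of \eqref{HardAA}--\eqref{HardBB}, and the three-regime splitting in the free parameter $k$ then fails to admit a common choice, at least on the range $x\in[\tfrac14,\tfrac12]$ (i.e.\ near the hexagonal corner, where $y\ge\tfrac{\sqrt3}2$): with the sharp (not the paper's generous) constants, Case $\mathbf{A}$ requires $\tfrac{k}{2\sqrt2}\ge e^{-\pi(\frac{3}{8k}-2k)}$, which forces $k\lesssim0.288$, while Case $\mathbf{B}$, i.e.\ the analogue of \eqref{gk} with $\sqrt2$ replaced by $2\sqrt2$, requires $\mu(k)<0.271$, i.e.\ $k\gtrsim0.2885$; the window, which in the paper is $(0.2225,0.2509)$, closes. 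So the transplanted scheme does not go through as stated; rescuing a first-derivative proof would require genuinely sharper bookkeeping (for instance coupling the factor $\sin 2\pi x$, which degenerates at $x=\tfrac12$, with the constraint $y=\sqrt{1-x^2}$, or extracting second-order information near $z=i$, where $X_0$ and $X_1$ both vanish and the inequality is tight). This tightness is exactly why the paper abandons the Theorem \ref{LemmaT1} route here and argues through $\mathcal{L}$ instead: the vanishing of the first derivative at $y=1$ converts the borderline first-order inequality into a second-order one with real slack. Your heuristic that the relevant ratio ``already exceeds the required constant at $\alpha=1$ and grows like $e^{c\alpha}$'' misses the dangerous regime $\tfrac{y}{2\alpha}\le k$, where both sides decay at comparable exponential rates and the comparison is exactly the borderline Case $\mathbf{A}$ computation above; so as it stands the heart of the proof is missing.
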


 The proof of (\ref{lemma111}) is through the operator $\mathcal{L}$
\begin{equation}
\label{Ldef}
\mathcal{L}Y:=Y''+\frac{2}{y}Y'= y^{-2} (y^2 Y^{'})^{'}, \ y>0
\end{equation}

Observe that if $\mathcal{L}f(y)\geq0$ for $y\geq 1>0$ and $f'(1)=0$, then $
f'(y)>0\;\;\hbox{for}\;\; y>1. $  From (\ref{JJJ1}) we see that $\frac{d}{dy} (\mathcal{W}_{1}(iy;\alpha)+\sigma \mathcal{W}_{0}(iy;\alpha))\bigl|_{y=1} =0$.  In view of this fact, to prove Lemma \ref{4lemma11}, it  suffices to prove that
 \begin{lemma}\label{4lemma1}Assume that $\sigma\in[0,2]$. Then
  \begin{equation}\aligned\nonumber
\mathcal{L}\Big(\mathcal{W}_{1}(iy;\alpha)+\sigma \mathcal{W}_{0}(iy;\alpha)
\Big)\geq0,\;\; \hbox{for}\;\; y\geq1, \alpha \geq 1.
 \endaligned\end{equation}

 \end{lemma}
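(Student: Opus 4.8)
Since $\mathcal{L}Y=y^{-2}(y^2Y')'$, one checks directly that $\mathcal{L}Y=y^{-1}(yY)''$, so the assertion of Lemma \ref{4lemma1} is equivalent to convexity of $y\mapsto y\big(\mathcal{W}_1(iy;\alpha)+\sigma\mathcal{W}_0(iy;\alpha)\big)$ on $[1,\infty)$ for each $\alpha\ge1$.  Because $\mathcal{L}\big(\mathcal{W}_1+\sigma\mathcal{W}_0\big)=\mathcal{L}\mathcal{W}_1+\sigma\mathcal{L}\mathcal{W}_0$ is affine in $\sigma$, it suffices to treat the endpoints $\sigma=0$ and $\sigma=2$, i.e.\ to show that $y\mathcal{W}_1(iy;\alpha)$ and $y\big(\mathcal{W}_1+2\mathcal{W}_0\big)(iy;\alpha)$ are convex for $y\ge1$, $\alpha\ge1$ (the restriction $\alpha\ge1$ coming from \eqref{W124}).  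Writing $\theta(t):=\vartheta(t;0)=\sum_{k\in\mathbb Z}e^{-\pi k^2t}$, on the imaginary axis one has $\mathcal{W}_1(iy;\alpha)=\theta(\alpha y)\theta(\alpha/y)$, the even part of $\mathcal{W}_0(iy;\alpha)$ equals $\theta(2\alpha y)\theta(2\alpha/y)=\mathcal{W}_1(iy;2\alpha)$, and the odd part equals the positive Gaussian sum $\sum_{m,n}e^{-\frac{\pi\alpha}{2}\big((2m+1)^2y+(2n+1)^2/y\big)}$.

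A direct computation gives the pointwise identity $\mathcal{L}\big(e^{-ay-b/y}\big)=\big[(a-by^{-2})^2-2ay^{-1}\big]e^{-ay-b/y}$ for $a,b>0$.  Hence $\mathcal{L}$ of any of the Gaussian lattice sums above is a sum whose summands carry the factor $(\text{nonnegative square})-2ay^{-1}$; a summand can be negative only when $a>0$ is small and $b/y^2$ is close to $a$.  For $\mathcal{W}_1$ these ``dangerous'' terms are the $(m,n)$ with $m,n\ge1$ and $n^2$ close to $m^2y^2$; any such term has exponent $\pi\alpha(m^2y+n^2/y)\ge\pi\alpha m^2\ge\pi m^2$, hence is exponentially small, and for $y$ bounded away from the positive integers there are none.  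The only non-negligible regime is $y$ near $1$, where the dominant dangerous term is $(m,n)=(1,1)$.

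For $\mathcal{W}_1$ (and for the even part of $\mathcal{W}_0$, with $\alpha$ replaced by $2\alpha$) the efficient tool is the Jacobi functional equation $\theta(1/t)=\sqrt t\,\theta(t)$, a case of \eqref{PPP2}, which gives
\[ y\,\mathcal{W}_1(iy;\alpha)=\alpha^{-1/2}y^{3/2}\theta(\alpha y)\theta(y/\alpha)=\alpha^{-1/2}\sum_{\lambda\ge0}r_\alpha(\lambda)\,y^{3/2}e^{-\pi\lambda y}, \]
with $r_\alpha(\lambda)\ge0$ (a representation count) and $r_\alpha(0)=1$.  Since $(y^{3/2}e^{-\pi\lambda y})''=y^{-1/2}e^{-\pi\lambda y}\big(\pi^2\lambda^2y^2-3\pi\lambda y+\tfrac34\big)$, the $\lambda=0$ summand contributes the convex function $y^{3/2}$, and a summand with $\lambda>0$ is convex unless $\pi\lambda y$ lies in the bounded interval $\big(\tfrac{3-\sqrt6}{2},\tfrac{3+\sqrt6}{2}\big)$; for $y\ge1$ this forces $\lambda<\tfrac{3+\sqrt6}{2\pi}$, so only finitely many $\lambda$ are ``bad''.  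One bounds $r_\alpha(\lambda)$, $e^{-\pi\lambda y}$ and the quadratic on these, and shows that the surplus coming from the $\lambda=0$ term together with the first few ``good'' $\lambda$ dominates.  For $\sigma=2$ one adds three contributions: the $\mathcal{W}_1$ estimate; the same estimate with $2\alpha$ for the even part; and, for the odd part — which has no constant mode and is handled directly from the pointwise identity — the bound on its finitely many dangerous terms, those with $(2m+1)^2=(2n+1)^2=1$, of exponent $\ge\pi\alpha$.

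The main obstacle is quantitative rather than structural.  The inequality is essentially sharp: as $\alpha\to\infty$, $\mathcal{W}_1(iy;\alpha)\to1$ and the second derivative tends to $0$, so no crude termwise bound can work; near $y=1$ with $\alpha$ of moderate size, the positive surplus (namely $\tfrac34$ from $\lambda=0$ plus exponentially small ``good'' contributions) and the negative contribution of the smallest positive $\lambda$ very nearly cancel.  Thus one must retain enough terms of the theta expansions and use the exact value of $\pi^2\lambda^2y^2-3\pi\lambda y+\tfrac34$ on the bad $\lambda$ (rather than the rough bound $-\tfrac32$), together with sharp lower bounds on the good terms, in order to close the estimate with a genuinely nonnegative remainder.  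Once $\mathcal{L}\big(\mathcal{W}_1+\sigma\mathcal{W}_0\big)(iy;\alpha)\ge0$ is established for $\sigma\in\{0,2\}$, affineness in $\sigma$ yields Lemma \ref{4lemma1}.
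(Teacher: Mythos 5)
Your two reductions are sound: $\mathcal{L}Y=y^{-1}(yY)''$ does convert the claim into convexity of $y\bigl(\mathcal{W}_1+\sigma\mathcal{W}_0\bigr)$, and affineness in $\sigma$ legitimately reduces to the endpoints $\sigma=0,2$ (the paper does the same in substance, settling $\sigma=0$ by citing Montgomery, Lemma \ref{ML}). The gap is in the mechanism you propose for the endpoint inequalities themselves, which are asserted rather than proved, and which for the even sums cannot be closed in the representation you choose. You work with $y\mathcal{W}_1(iy;\alpha)=\alpha^{-1/2}\sum_{\lambda}r_\alpha(\lambda)\,y^{3/2}e^{-\pi\lambda y}$ and claim the $\lambda=0$ surplus $\tfrac34$ plus ``the first few good $\lambda$'' dominates the bad terms. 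Compare with the un-Poissoned form (the paper's $I_a-I_b$ in \eqref{Iab}): at $y=1$ one finds $\mathcal{L}\mathcal{W}_1(i;\alpha)\asymp 4\pi^2\alpha^2e^{-\pi\alpha}$, so your $\lambda$-sum $\sum_\lambda r_\alpha(\lambda)e^{-\pi\lambda}\bigl(\pi^2\lambda^2-3\pi\lambda+\tfrac34\bigr)$ is exponentially small in $\alpha$, while its individual terms are of order one and the number of numerically relevant terms ($\lambda=n^2/\alpha$, $n\lesssim\sqrt{\alpha}$) grows with $\alpha$. Concretely, at $(y,\alpha)=(1,2\pi)$ the whole sum is about $10^{-5}$, yet the two bad terms ($\pi\lambda y=\tfrac12$ and $2$) total about $-0.94$ against the surplus $\tfrac34$; at $\alpha\approx3.4$ the single bad term is about $-0.93$. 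Thus in the regime $y$ near $1$, $\alpha$ large — squarely inside the hypotheses $\alpha\ge1$, $y\ge1$ — positivity is a cancellation to exponential accuracy among an unboundedly growing family of $O(1)$ terms, and no argument that bounds a fixed finite set of bad terms and retains ``a few'' good ones, however sharp the constants, can be uniform in $\alpha$. The cure is either to quote Montgomery's theorem for these sums, as the paper does, or to switch back to the original double Gaussian sum, where every term carries $e^{-\pi\alpha(n^2y+m^2/y)}$ and positivity follows from comparing $\pi^2\alpha^2(\tfrac{m^2}{y^2}-n^2)^2$ with $\tfrac{2\pi\alpha n^2}{y}$ — which is exactly the route of Lemma \ref{lemmaD} and Lemma \ref{Lemmatype1}.

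The $\sigma=2$ endpoint near $(\alpha,y)=(1,1)$ is likewise not covered by your sketch. The negative part of $\mathcal{L}$ of the odd sum is $\approx\frac{4\pi\alpha}{y}e^{-\frac{\pi\alpha}{2}(y+\frac1y)}$, which at $(1,1)$ equals $4\pi e^{-\pi}\approx0.54$; multiplied by $\sigma=2$ this nearly exhausts the available positive contribution of the even sums there ($\mathcal{L}\mathcal{W}_1(i;1)\approx1.1$ plus the $2\alpha$ even part), so the margin is a few percent — these terms are not negligible ``dangerous terms of exponent $\ge\pi\alpha$,'' and your $\mathcal{W}_1$ surplus in the $\lambda$-expansion is itself the outcome of the delicate cancellation described above. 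This is precisely why the paper, after invoking Lemma \ref{ML} for $y\ge\sqrt3$, devotes Lemma \ref{Lemma43} to the strip $y\in[1,\sqrt3]$: it keeps the $|n|,|m|\le1$ terms explicitly, absorbs the cross term $e^{-\frac{\pi\alpha}{2}(y+\frac1y)}$ by an arithmetic–geometric mean coupling of $e^{-\pi\alpha y}$ and $e^{-\pi\alpha/y}$, and closes with a monotonicity-plus-numerical verification at $(\alpha,y)=(1,1)$. Your proposal offers no substitute for this quantitative core, so as written it is a plan whose decisive inequalities remain unproved, and whose stated domination scheme fails for the even sums in part of the required range.
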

Note that $\mathcal{L}\mathcal{W}_{1}(iy;\alpha)\geq$ for $\alpha>0, y\geq1$ is proved by Montgomery \cite{Mon1988} and stated in
Lemma \ref{ML}. To prove Lemma \ref{4lemma1}, it suffices to prove that it holds for $\sigma=2$.

To prove Lemma \ref{4lemma1} we compute first
\begin{lemma}\label{lemmaD}
One first has the following two identities by direct computation
 \begin{equation}\aligned\label{Iabb}
\mathcal{L}\Big(\sum_{n,m}e^{-\pi \alpha(n^2y+\frac{m^2}{y})}\Big)
&=I_a(y)-I_b(y),\\
\mathcal{L}\Big(\sum_{n,m}e^{-\frac{\pi \alpha}{2}\big((2n+1)^2y+\frac{(2m+1)^2}{y}\big)}\Big)
&=J_a(y)-J_b(y),
 \endaligned\end{equation}
where
 \begin{equation}\aligned \label{Iab}
I_{a}(y):&=\sum_{n,m}\pi^2\alpha^2(\frac{m^2}{y^2}-n^2)^2e^{-\pi \alpha(n^2y+\frac{m^2}{y})},\\
I_{b}(y):&=\sum_{n,m}\frac{2\pi \alpha n^2}{y}e^{-\pi \alpha(n^2y+\frac{m^2}{y})},\\
J_a(y):&=\sum_{n,m}\frac{\pi^2\alpha^2}{4}\big(\frac{(2m+1)^2}{y^2}-(2n+1)^2\big)^2e^{-\frac{\pi \alpha}{2}\big((2n+1)^2y+\frac{(2m+1)^2}{y}\big)},\\
J_b(y):&=\frac{\pi \alpha}{y}\sum_{n,m}(2n+1)^2e^{-\frac{\pi \alpha}{2}\big((2n+1)^2y+\frac{(2m+1)^2}{y}\big)}.
 \endaligned\end{equation}

\end{lemma}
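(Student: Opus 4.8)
The plan is to prove both identities by differentiating the two lattice sums term by term with respect to $y$. This is legitimate because each sum, together with all of its formal $y$-derivatives, converges locally uniformly on $(0,\infty)$: for $y$ in a compact subinterval of $(0,\infty)$ the general term decays like a Gaussian in $(n,m)$, so it dominates the polynomial prefactors produced by differentiation. Consequently $\mathcal{L}$ commutes with $\sum_{n,m}$, and it suffices to compute $\mathcal{L}$ on a single exponential factor.

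For the first identity, write $g(y)=e^{\phi(y)}$ with $\phi(y)=-\pi\alpha\big(n^2y+\tfrac{m^2}{y}\big)$, so that $\phi'(y)=-\pi\alpha\big(n^2-\tfrac{m^2}{y^2}\big)$ and $\phi''(y)=-\tfrac{2\pi\alpha m^2}{y^3}$. Using the identity $\mathcal{L}g=y^{-2}(y^2g')'=\big(\phi''+(\phi')^2+\tfrac{2}{y}\phi'\big)g$, the crucial cancellation is
\[
\phi''(y)+\frac{2}{y}\phi'(y)=-\frac{2\pi\alpha m^2}{y^3}-\frac{2\pi\alpha n^2}{y}+\frac{2\pi\alpha m^2}{y^3}=-\frac{2\pi\alpha n^2}{y},
\]
whence $\mathcal{L}g=\Big(\pi^2\alpha^2\big(\tfrac{m^2}{y^2}-n^2\big)^2-\tfrac{2\pi\alpha n^2}{y}\Big)g$. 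Summing over $(n,m)\in\mathbb{Z}^2$ gives exactly $I_a(y)-I_b(y)$. For the second identity the computation is the same with $(2n+1),(2m+1)$ in place of $n,m$ and the overall factor $\tfrac{\pi\alpha}{2}$ in the exponent: with $\psi(y)=-\tfrac{\pi\alpha}{2}\big((2n+1)^2y+\tfrac{(2m+1)^2}{y}\big)$ one finds $\psi''+\tfrac{2}{y}\psi'=-\tfrac{\pi\alpha(2n+1)^2}{y}$, hence $\mathcal{L}e^{\psi}=\Big(\tfrac{\pi^2\alpha^2}{4}\big(\tfrac{(2m+1)^2}{y^2}-(2n+1)^2\big)^2-\tfrac{\pi\alpha(2n+1)^2}{y}\Big)e^{\psi}$, and summing over $(n,m)$ yields $J_a(y)-J_b(y)$.

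There is no genuine obstacle here: the statement is, as it says, a direct computation. The only points needing a word of care are (i) the justification of differentiation under the summation sign, handled by the local uniform convergence noted above, and (ii) tracking the algebraic cancellation $\phi''+\tfrac{2}{y}\phi'=-\tfrac{2\pi\alpha n^2}{y}$ (and its analogue for $\psi$), which is precisely what produces the clean splitting into a manifestly nonnegative squared term $I_a$ (resp. $J_a$) and the single negative term $I_b$ (resp. $J_b$); controlling the latter against the former is the real work, carried out in the lemmas that follow.
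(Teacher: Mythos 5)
Your computation is correct and is exactly the ``direct computation'' the paper has in mind: writing each summand as $e^{\phi}$, using $\mathcal{L}e^{\phi}=\big(\phi''+(\phi')^2+\tfrac{2}{y}\phi'\big)e^{\phi}$, observing the cancellation $\phi''+\tfrac{2}{y}\phi'=-\tfrac{2\pi\alpha n^2}{y}$ (and its odd-index analogue), and summing term by term, which the locally uniform Gaussian decay justifies. The paper offers no further argument for this lemma, so your proposal matches its intended proof.
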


 The following Lemma is due to Montgomery \cite{Mon1988}.
\begin{lemma}\label{ML}
\begin{itemize}
  \item $\mathcal{L}\Big(\sum_{n,m}e^{-\pi \alpha(n^2y+\frac{m^2}{y})}\Big)\geq0$ for $y\in[1,\infty)$ with equality holds only when $y=1$;
  \item $\mathcal{L}\Big(\sum_{n,m}e^{-\frac{\pi \alpha}{2}\big((2n+1)^2y+\frac{(2m+1)^2}{y}\big)}+\sum_{n,m}e^{-2\pi \alpha(n^2y+\frac{m^2}{y})}\Big)\geq0$
  for $y\in[\sqrt3,\infty)$ with equality holds only when $y=\sqrt3$.
\end{itemize}

\end{lemma}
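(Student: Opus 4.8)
This is Montgomery's minimal--theta lemma \cite{Mon1988}; the plan is to recall its proof, feeding in the explicit $\mathcal{L}$--computation of Lemma \ref{lemmaD} rather than redoing everything from scratch. Write $\theta(t)=\sum_{n\in\mathbb{Z}}e^{-\pi n^{2}t}$, so that the sum in the first bullet is $F(y):=\theta(\alpha y)\,\theta(\alpha/y)$ and, by Lemma \ref{lemmaD}, $\mathcal{L}F=I_{a}(y)-I_{b}(y)$; the first bullet therefore amounts to the pointwise inequality $I_{a}(y)\ge I_{b}(y)$ for $y\ge 1$. Introducing the weights $p=m^{2}/y$ and $q=n^{2}y$ occurring in the common exponent, this reads
\begin{equation}\nonumber
\pi\alpha\sum_{n,m}(p-q)^{2}e^{-\pi\alpha(p+q)}\ \ge\ 2\sum_{n,m}q\,e^{-\pi\alpha(p+q)},\qquad y\ge 1 .
\end{equation}
Both sides are invariant under the simultaneous substitution $n\leftrightarrow m$, $y\mapsto 1/y$, which is precisely why $y=1$ is the critical value and why it suffices to treat $y\ge 1$. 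For $y\ge 1$ one has $e^{-\pi\alpha n^{2}y}\le e^{-\pi\alpha n^{2}/y}$, so the factors $e^{-\pi\alpha q}$ on the right damp faster (in $|n|$) than the factors $e^{-\pi\alpha p}$ on the left; hence each term of the right-hand side carrying a large weight $q$ is dominated by the term of the left-hand side with the \emph{same} indices $(n,m)$, whose coefficient $\pi\alpha(p-q)^{2}$ is large exactly when $q$ is. Grouping the lattice points along these lines --- Montgomery's argument --- gives the inequality, strictly for $y>1$; and the passage from $\mathcal{L}F\ge 0$ with $F'(1)=0$ to monotonicity of $F$ on $[1,\infty)$ is the elementary observation recorded after \eqref{Ldef}.

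\textbf{The second bullet} is the same statement with the rectangular lattice $\mathbb{Z}\oplus iy\mathbb{Z}$ replaced by the centred rectangular lattice got by adjoining the half-period: on the ray $z=iy$ the combined sum is exactly $\mathcal{W}_{0}(iy;\alpha)$, and $y=\sqrt3$ is the value at which this centred lattice is hexagonal (its modular parameter $\tfrac{z+1}{2}$ equals $\tfrac12+\tfrac{\sqrt3}{2}i$), which is why the anchor moves from $1$ to $\sqrt3$. By Lemma \ref{lemmaD} the $\mathcal{L}$--image of the combined sum is $\bigl(J_{a}-J_{b}\bigr)+\bigl(I_{a}^{(2\alpha)}-I_{b}^{(2\alpha)}\bigr)$, where the superscript $(2\alpha)$ indicates the quantities of Lemma \ref{lemmaD} taken with $\alpha$ replaced by $2\alpha$ (the summand $\sum_{n,m}e^{-2\pi\alpha(n^{2}y+m^{2}/y)}$ being $F$ at parameter $2\alpha$); after the same weight substitution this becomes a single inequality over the centred rectangular lattice, which one proves by the identical term-by-term grouping, now anchored at $y=\sqrt3$. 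The details are in \cite{Mon1988}.

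\textbf{Main obstacle.} The delicate point, in both bullets, is the term comparison near the anchor ($y=1$, resp.\ $y=\sqrt3$): there the two leading families of lattice points are in near balance, so one cannot discard the lower-order contributions crudely and the grouping must be carried out with quantitative care. This is the heart of Montgomery's proof and we import it verbatim from \cite{Mon1988}. Everything else --- the reduction to $y\ge 1$ via the reflection $y\mapsto 1/y$, the identification of the combined sum in the second bullet with $\mathcal{W}_{0}(iy;\alpha)$, and the implication ``$\mathcal{L}(\,\cdot\,)\ge 0$ with vanishing $y$--derivative at the anchor'' $\Rightarrow$ ``monotone on $[1,\infty)$ (resp.\ $[\sqrt3,\infty)$), strictly beyond the anchor'' --- is routine, and the last of these gives the uniqueness of the minimizer asserted in the two bullets.
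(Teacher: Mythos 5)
The paper offers no proof of Lemma \ref{ML} at all---it is stated purely as a citation to Montgomery \cite{Mon1988}---and your proposal takes essentially the same route, correctly translating the two bullets into Montgomery's setting (the first as the inequality $I_a(y)\ge I_b(y)$ coming from Lemma \ref{lemmaD} applied to $\theta(\alpha y)\theta(\alpha/y)$, the second as the corresponding statement for $\mathcal{W}_0(iy;\alpha)$, i.e.\ the theta function along the line $\Re w=\tfrac12$ anchored at the hexagonal point $y=\sqrt3$) before importing the decisive grouping argument verbatim from \cite{Mon1988}. The only blemish is the side remark that \emph{both} sides of your displayed inequality are invariant under $n\leftrightarrow m$, $y\mapsto 1/y$ (the right-hand side is not; only the full product $\theta(\alpha y)\theta(\alpha/y)$ carries that symmetry, which is what yields the vanishing $y$-derivative at $y=1$), but this is not load-bearing since the hard near-anchor comparison is, as in the paper, deferred to Montgomery.
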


The $\mathcal{L}\Big(\sum_{n,m}e^{-\frac{\pi \alpha}{2}\big((2n+1)^2y+\frac{(2m+1)^2}{y}\big)}\Big)$ will change sign on the interval $[1,\sqrt3]$, however, we establish the following

\begin{lemma}\label{Lemma43} At least for $\sigma=2$, it holds that
\begin{equation}\label{sigma}
\mathcal{L}\Big(\sum_{n,m}e^{-\pi \alpha(n^2y+\frac{m^2}{y})}+\sigma\sum_{n,m}e^{-\frac{\pi \alpha}{2}\big((2n+1)^2y+\frac{(2m+1)^2}{y}\big)}\Big)\geq0,  \hbox{for}\;\; y\in[1,\sqrt3].
\end{equation}
As a consequence, (\ref{sigma}) holds for $\sigma\in[0,2]$ as well.
\end{lemma}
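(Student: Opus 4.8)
The plan is to prove the inequality for $\sigma=2$ only; the range $\sigma\in[0,2]$ then follows from the convexity identity
\[
\mathcal{L}\Big(\Theta_1(y)+\sigma\,\Theta_{oo}(y)\Big)=\Big(1-\tfrac{\sigma}{2}\Big)\mathcal{L}\Theta_1(y)+\tfrac{\sigma}{2}\,\mathcal{L}\Big(\Theta_1(y)+2\,\Theta_{oo}(y)\Big),
\]
together with Lemma~\ref{ML}, which already gives $\mathcal{L}\Theta_1(y)\ge 0$ on $[1,\infty)$; here I abbreviate $\Theta_1(y)=\sum_{n,m}e^{-\pi\alpha(n^2y+m^2/y)}$ and $\Theta_{oo}(y)=\sum_{n,m}e^{-\frac{\pi\alpha}{2}((2n+1)^2y+(2m+1)^2/y)}$. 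By Lemma~\ref{lemmaD}, the $\sigma=2$ statement is precisely
\[
\big(I_a(y)-I_b(y)\big)+2\big(J_a(y)-J_b(y)\big)\ge 0,\qquad y\in[1,\sqrt3],\ \alpha\ge 1.
\]
Since $I_a,J_a\ge 0$ termwise and $I_a-I_b=\mathcal{L}\Theta_1\ge 0$ by Lemma~\ref{ML}, it suffices to dominate the negative part $I_b+2J_b$ by the positive part $I_a+2J_a$; in the tightest regime one even needs only the sharper comparison $I_a-I_b\ge 2J_b$, but it is safer to carry the nonnegative surplus $2J_a$ along.

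Before the estimates I would record the elementary ordering $\tfrac1y\le\tfrac12\big(y+\tfrac1y\big)\le y$ for $y\ge 1$, equivalently $e^{-\pi\alpha y}\le e^{-\frac{\pi\alpha}{2}(y+1/y)}\le e^{-\pi\alpha/y}$, which fixes the hierarchy of exponentials in the problem. The slowest-decaying term anywhere is the $(n,m)=(0,\pm1)$ contribution to $I_a$, namely $2\pi^2\alpha^2y^{-4}e^{-\pi\alpha/y}$; the leading contribution to $J_b$ is $\tfrac{4\pi\alpha}{y}e^{-\frac{\pi\alpha}{2}(y+1/y)}$ (from $|2n+1|=|2m+1|=1$); and the leading contributions to $I_b$ are $\tfrac{4\pi\alpha}{y}e^{-\pi\alpha y}$ and $\tfrac{8\pi\alpha}{y}e^{-\pi\alpha(y+1/y)}$ (from $(\pm1,0)$ and $(\pm1,\pm1)$). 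These terms must all be retained exactly.

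The execution is then a truncation-and-tail argument. I would split each of $I_a,I_b,J_a,J_b$ into a finite principal part — the shells $(n,m)\in\{(\pm1,0),(0,\pm1),(\pm1,\pm1),(0,\pm2)\}$ for the $\mathbb{Z}^2$-sums and $(2n+1,2m+1)\in\{\pm1\}^2$ for the odd sum — and a remainder, the remainder being majorized, with $1\le y\le\sqrt3$ used to minorize the exponents and $\alpha\ge 1$ to force decay, by an explicit geometric series of the kind that defines $\mu$ in Lemma~\ref{LemmaT3}; the point is that every remainder term decays strictly faster than $e^{-\pi\alpha/y}$. This reduces the claim to an explicit two-variable inequality $P(y,\alpha)-T(y,\alpha)\ge 0$ on $[1,\sqrt3]\times[1,\infty)$. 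Dividing through by $e^{-\pi\alpha/y}$ and using the monotonicities above, I would settle $\alpha\ge\alpha_0$ (a modest explicit threshold) via the asymptotic dominance $\pi\alpha-3>0$ of the principal positive term over the principal negative ones, and then dispatch the residual compact box $[1,\sqrt3]\times[1,\alpha_0]$ by a finite verification, keeping track that on $[1,\sqrt3]$ the principal parts of $I_a$, $I_b$ and $J_b$ are each monotone in $y$, so that the worst case is located through the ratio of principal parts rather than by sampling the endpoints.

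The main obstacle is the razor-thin margin at the corner $(y,\alpha)=(1,1)$: there the left-hand side equals $4\pi e^{-\pi}(\pi-3)$ reduced by the $e^{-2\pi}$-order correction coming from the $(\pm1,\pm1)$ shell of $I_b$ (which alone exceeds the final answer), leaving a positive number only of order $10^{-2}$. Consequently the truncation has to be carried to the second shell and the tail must be controlled to within a few percent of the principal part — any coarser estimate destroys positivity. A further technical nuisance is that the exponents $n^2y+m^2/y$ cross one another as $y$ runs over $[1,\sqrt3]$ (the $(\pm1,\pm1)$ and $(0,\pm2)$ contributions coincide at $y=\sqrt3$), so the extremal $y$ is interior, and the one-variable reduction must be carried out with the $y$-monotonicity of each principal term in hand rather than by merely evaluating at $y\in\{1,\sqrt3\}$.
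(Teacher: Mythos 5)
Your proposal follows essentially the same route as the paper's proof: reduce $\sigma\in[0,2]$ to $\sigma=2$ by linearity of $\mathcal{L}$ together with Lemma \ref{ML}, expand via Lemma \ref{lemmaD}, truncate to the first shells with explicit tail bounds (this is the paper's Lemma \ref{Lemmatype1} with its $Error_1,Error_2$), and then verify the resulting explicit two-variable inequality on $[1,\sqrt3]\times[1,\infty)$, whose razor-thin positive margin at $(\alpha,y)=(1,1)$ (of order $10^{-2}$, as you compute) is exactly the critical point the paper also hinges on via $\mathcal{B}(1,1;2)>0$. The only difference is in the endgame bookkeeping: the paper factors out $e^{-\pi\alpha y}y^{-4}$ and invokes monotonicity of the cofactor $\mathcal{B}$ in both $\alpha$ and $y$ to reduce to a single numerical evaluation, whereas you split into a large-$\alpha$ dominance regime plus a finite verification on a compact box with $y$-monotonicity control; both are legitimate and of comparable rigor.
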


The proof of Lemma \ref{Lemma43} is quite technique and we postpone it to the end of this section.
Define $\sigma_{M}>0$ as

\begin{equation}\nonumber\aligned
\sigma_{M}:=\max_{\sigma}\{
\sigma\mid: \mathcal{L}\Big(\sum_{n,m}e^{-\pi \alpha (n^2y+\frac{m^2}{y})}+\sigma\sum_{n,m}e^{-\frac{\pi \alpha }{2}\big((2n+1)^2y+\frac{(2m+1)^2}{y}\big)}\Big)\geq0,  \hbox{for}\;\; y\in[1,\sqrt3], \alpha\geq1
\}.
 \endaligned\end{equation}
Here, as the invariance for the parameter $\alpha$ in \eqref{JJJ1}, one only needs to consider the case $\alpha\geq1$.

We omit the proof of the following lemma that can be proved by the monotonicity property and Lemma \ref{Lemma43}.
\begin{lemma}\label{Lemma42} $\sigma_M$ satisfies
\begin{equation}\nonumber
\sigma_M\leq-\frac{\mathcal{L}'\Big(\sum_{n,m}e^{-\pi \alpha(n^2y+\frac{m^2}{y})}\Big)\mid_{y=1,\alpha=1}}
{\mathcal{L}'\Big(\sum_{n,m}e^{-\frac{\pi \alpha}{2}\big((2n+1)^2y+\frac{(2m+1)^2}{y}\big)}\Big)\mid_{y=1,\alpha=1}}.
\end{equation}

\end{lemma}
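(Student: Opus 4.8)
The plan is to read the bound off the inequality defining $\sigma_M$, specialised to the single point $(y,\alpha)=(1,1)$, and then to rewrite the resulting pointwise ratio in the derivative form of the statement by means of the reflection symmetry $y\mapsto 1/y$. Throughout abbreviate $T_1(y):=\sum_{n,m}e^{-\pi\alpha(n^2y+\frac{m^2}{y})}$ and $T_0(y):=\sum_{n,m}e^{-\frac{\pi\alpha}{2}\big((2n+1)^2y+\frac{(2m+1)^2}{y}\big)}$, keeping the dependence on $\alpha\ge 1$ implicit, so that the quantity inside the definition of $\sigma_M$ is $\mathcal{L}(T_1+\sigma T_0)=\mathcal{L}T_1+\sigma\mathcal{L}T_0$.

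First I would record the structure of the admissible set. By the first bullet of Lemma \ref{ML} we have $\mathcal{L}T_1\ge 0$ on $[1,\infty)$, so at any $(y,\alpha)$ with $\mathcal{L}T_0(y)\ge 0$ the inequality $\mathcal{L}(T_1+\sigma T_0)\ge 0$ holds for every $\sigma\ge 0$; consequently the set of $\sigma$ in the definition of $\sigma_M$ is exactly $[0,\sigma_M]$, where
\[ \sigma_M=\inf\Big\{\,\frac{\mathcal{L}T_1(y)}{-\,\mathcal{L}T_0(y)}\ :\ y\in[1,\sqrt3],\ \alpha\ge 1,\ \mathcal{L}T_0(y)<0\,\Big\}. \]
Lemma \ref{Lemma43} guarantees $\sigma_M\ge 2$, so this index set is nonempty and $\sigma_M<\infty$. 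It therefore suffices to show that $(y,\alpha)=(1,1)$ lies in the index set, i.e. $\mathcal{L}T_0(1)<0$ at $\alpha=1$. Using Lemma \ref{lemmaD}, $\mathcal{L}T_0(1)=J_a(1)-J_b(1)$; every summand of $J_a(1)$ carries the factor $\big((2m+1)^2-(2n+1)^2\big)^2$, which vanishes on the four dominant indices $|2n+1|=|2m+1|=1$, whereas those four indices contribute exactly $4\pi\alpha e^{-\pi\alpha}$ to $J_b(1)$. A crude geometric bound on the remaining terms of $J_a(1)$ (the next nonvanishing contribution being of order $e^{-5\pi\alpha}$) gives $J_a(1)<4\pi\alpha e^{-\pi\alpha}<J_b(1)$ for all $\alpha\ge 1$, hence $\mathcal{L}T_0(1)<0$; the analogous estimate for $T_1$ gives $\mathcal{L}T_1(1)\big|_{\alpha=1}>0$, so the resulting bound $\sigma_M\le -\,\mathcal{L}T_1(y)\big|_{y=1,\alpha=1}\big/\mathcal{L}T_0(y)\big|_{y=1,\alpha=1}$ is nontrivial.

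Finally I would convert this into the derivative form asserted. Both $T_1$ and $T_0$ are invariant under $y\mapsto 1/y$ (interchange $n\leftrightarrow m$ in the sum), so writing such a $T$ as $T(y)=\Phi(\log y)$ with $\Phi$ even one computes $\mathcal{L}T(y)=y^{-2}\big(\Phi''(\log y)+\Phi'(\log y)\big)$, whence $\frac{d}{dy}\mathcal{L}T(y)=y^{-3}\big(\Phi'''(\log y)-\Phi''(\log y)-2\Phi'(\log y)\big)$, which at $y=1$ equals $-\Phi''(0)=-\,\mathcal{L}T(1)$ because $\Phi$ even forces $\Phi'(0)=\Phi'''(0)=0$. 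Applying this to $T_1$ and to $T_0$ at $\alpha=1$ turns the bound of the previous paragraph into $\sigma_M\le -\,\mathcal{L}'T_1(1)\big|_{\alpha=1}\big/\mathcal{L}'T_0(1)\big|_{\alpha=1}$, which is the claim. The only step that is not purely formal is the uniform strict sign $\mathcal{L}T_0(1)<0$ for all $\alpha\ge 1$; this is precisely where the monotonicity property quoted in the statement enters — one checks that the rescaled quantities $e^{\pi\alpha}J_a(1)$ and $e^{\pi\alpha}J_b(1)$ are monotone in $\alpha$, reducing the verification to $\alpha=1$, where it is an explicit convergent estimate with controlled remainder. This is the main (indeed the only real) obstacle; everything else is the soft argument above.
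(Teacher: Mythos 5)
Your proposal is correct, and since the paper only asserts this lemma ``can be proved by the monotonicity property and Lemma \ref{Lemma43}'' and omits the argument, what you give is a genuine completion rather than a reproduction of the paper's route. Your proof is the natural one: any admissible $\sigma$ in the definition of $\sigma_M$ must satisfy the constraint at the single point $(y,\alpha)=(1,1)$, i.e. $\mathcal{L}T_1(1)+\sigma\,\mathcal{L}T_0(1)\ge 0$ at $\alpha=1$ in your notation; your dominant-term estimate from Lemma \ref{lemmaD} correctly gives $\mathcal{L}T_0(1)\mid_{\alpha=1}=J_a(1)-J_b(1)<0$ (the four indices $|2n+1|=|2m+1|=1$ contribute nothing to $J_a$ but $4\pi\alpha e^{-\pi\alpha}$ to $J_b$, and the remainder of $J_a$ is $O(\alpha^2e^{-5\pi\alpha})$), so $\sigma_M\le -\mathcal{L}T_1/\mathcal{L}T_0\mid_{y=1,\alpha=1}$. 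The one step needing care is your conversion to the primed form, via $\frac{d}{dy}\mathcal{L}T\mid_{y=1}=-\mathcal{L}T\mid_{y=1}$ for any smooth $T$ with $T(1/y)=T(y)$; this identity is right and can also be checked directly from \eqref{Iab}: at $y=1$, after symmetrizing in $n,m$, one finds $\frac{d}{dy}I_a=-2I_a$ and $\frac{d}{dy}I_b=-I_a-I_b$, hence $\frac{d}{dy}(I_a-I_b)=-(I_a-I_b)$, and likewise for $J_a-J_b$. It both justifies reading $\mathcal{L}'$ as $\frac{d}{dy}\mathcal{L}(\cdot)$ and shows the stated bound equals the unprimed ratio $-\mathcal{L}T_1(1)/\mathcal{L}T_0(1)\mid_{\alpha=1}$ (numerically about $2.06$, consistent with $\sigma_M\ge 2$ from Lemma \ref{Lemma43}); incidentally it also shows the Remark printed after the lemma cannot be literally correct, since $\mathcal{L}$ and $\mathcal{L}'$ cannot vanish/not vanish independently at $y=1$, and your signs $\mathcal{L}T_1(1)>0>\mathcal{L}T_0(1)$ are the corrected version.

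Two small blemishes, neither affecting correctness: the inference ``Lemma \ref{Lemma43} guarantees $\sigma_M\ge 2$, so the index set is nonempty and $\sigma_M<\infty$'' runs backwards — nonemptiness and finiteness are exactly what your verification $\mathcal{L}T_0(1)\mid_{\alpha=1}<0$ supplies, and only $\alpha=1$ is needed, so the closing discussion of uniformity in $\alpha\ge 1$ and monotone rescaling is superfluous; and the interval characterization of the admissible set (using Lemma \ref{ML} where $\mathcal{L}T_0\ge 0$) is fine but more than is required, since the upper bound only needs that every admissible $\sigma$ obeys the constraint at $(1,1)$. Likewise the positivity of $\mathcal{L}T_1(1)$ is only cosmetic (it makes the bound nontrivial) and is not needed for the inequality itself.
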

\begin{remark} The expression in Lemma \ref{Lemma42} is well-defined since one notes that
\begin{equation}\nonumber\aligned
&{\mathcal{L}\Big(\sum_{n,m}e^{-\pi \alpha(n^2y+\frac{m^2}{y})}\Big)\mid_{y=1}}=
{\mathcal{L}\Big(\sum_{n,m}e^{-\frac{\pi \alpha}{2}\big((2n+1)^2y+\frac{(2m+1)^2}{y}\big)}\Big)\mid_{y=1}}=0\\
&{\mathcal{L}'\Big(\sum_{n,m}e^{-\pi \alpha(n^2y+\frac{m^2}{y})}\Big)\mid_{y=1}}>0,
{\mathcal{L}\Big(\sum_{n,m}e^{-\frac{\pi \alpha}{2}\big((2n+1)^2y+\frac{(2m+1)^2}{y}\big)}\Big)\mid_{y=1}}<0.
 \endaligned\end{equation}
\end{remark}

\begin{theorem}\label{Th41} For any $\kappa\in[\frac{1}{1+\sigma_M},\frac{\sigma_M}{1+\sigma_M}]$,

\begin{equation}\nonumber
Minimizer_{z\in \mathbb{H}}\mathcal{E}_\kappa(z)=i,
\end{equation}
which is the purely imaginary number $i$, corresponding to the square lattice by our setting and is unique up the group $\mathcal{G}_2$ defined in \eqref{GroupG2}. In particular, via Lemma \ref{Lemma43},
$$
[\frac{1}{1+\sigma_M},\frac{\sigma_M}{1+\sigma_M}]\supseteq[\frac{1}{3}, \frac{2}{3}].
$$

\end{theorem}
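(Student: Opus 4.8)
The plan is to derive Theorem \ref{Th41} from the minimum principle (Theorem \ref{Th1}) by controlling $\mathcal{E}_\kappa$ separately on the two arcs $\Omega_{ea}$ and $\Omega_{eb}$ that make up $\Omega_e$, which meet only at the point $i$. The first step is a purely arithmetic observation: if $\kappa\in[\tfrac{1}{1+\sigma_M},\tfrac{\sigma_M}{1+\sigma_M}]$ then \emph{both} $\tfrac{1-\kappa}{\kappa}\le\sigma_M$ and $\tfrac{\kappa}{1-\kappa}\le\sigma_M$, because $\kappa\ge\tfrac{1}{1+\sigma_M}\Leftrightarrow\tfrac1\kappa-1\le\sigma_M$ while $\kappa\le\tfrac{\sigma_M}{1+\sigma_M}\Leftrightarrow\tfrac1\kappa-1\ge\tfrac1{\sigma_M}$. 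These two (dual) inequalities are exactly what is needed to handle $\Omega_{ea}$ and $\Omega_{eb}$ respectively. Next I would extend Lemma \ref{4lemma11} from $\sigma=2$ to every $\sigma\in[0,\sigma_M]$: by the definition of $\sigma_M$ one has $\mathcal{L}\big(\mathcal{W}_1(iy;\alpha)+\sigma\mathcal{W}_0(iy;\alpha)\big)\ge0$ on $[1,\sqrt3]$ for $\alpha\ge1$, and on $[\sqrt3,\infty)$ Montgomery's Lemma \ref{ML} gives this for all $\sigma\ge0$ (the two pieces $\mathcal{L}\mathcal{W}_1$ and $\mathcal{L}\mathcal{W}_0$ being separately nonnegative there), so $\mathcal{L}(\mathcal{W}_1+\sigma\mathcal{W}_0)\ge0$ on all of $[1,\infty)$; since the inversion invariance \eqref{JJJ1} forces the $y$-derivative of $\mathcal{W}_1(iy;\alpha)+\sigma\mathcal{W}_0(iy;\alpha)$ to vanish at $y=1$, the observation recorded after \eqref{Ldef} gives that this derivative is $\ge0$ for all $y\ge1$, and the $\alpha$-invariance \eqref{W124} removes the restriction $\alpha\ge1$. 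By the Bernstein formula \eqref{Bern} the same holds for $\mathcal{E}_1+\sigma\mathcal{E}_0$ (the $\sigma_M$-version of Theorem \ref{4th0}).

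With these in hand, $\Omega_{ea}$ is immediate: for $z=iy$ with $y\in[1,\sqrt3]$, $\mathcal{E}_\kappa(iy)=\kappa\big(\mathcal{E}_1(iy)+\tfrac{1-\kappa}{\kappa}\mathcal{E}_0(iy)\big)$ with $\tfrac{1-\kappa}{\kappa}\le\sigma_M$, so $\mathcal{E}_\kappa(iy)$ is nondecreasing on $[1,\infty)$ and in particular minimized at $y=1$ on $\Omega_{ea}$. For $\Omega_{eb}$ I would parametrize the arc by $z=u+i\sqrt{1-u^2}$, $u\in[0,\tfrac12]$, and use Lemma \ref{l-circle-im} with $b=\kappa$: combining \eqref{line-to-circle-X}--\eqref{line-to-circle-Y} one gets
\[
\frac{d}{du}\,\mathcal{W}_\kappa\big(u+i\sqrt{1-u^2};\alpha\big)=\frac{1}{(1-u)\sqrt{1-u^2}}\;Y_{1-\kappa}\!\Big(i\,\sqrt{\tfrac{1+u}{1-u}}\,;\alpha\Big).
\]
Here the prefactor is positive on $[0,\tfrac12]$, and $\sqrt{\tfrac{1+u}{1-u}}\ge1$ there; moreover $Y_{1-\kappa}(iy;\alpha)=(1-\kappa)\tfrac{d}{dy}\big(\mathcal{W}_1(iy;\alpha)+\tfrac{\kappa}{1-\kappa}\mathcal{W}_0(iy;\alpha)\big)$, which by the previous step is $\ge0$ for $y\ge1$ since $\tfrac{\kappa}{1-\kappa}\le\sigma_M$. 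Hence $u\mapsto\mathcal{W}_\kappa(u+i\sqrt{1-u^2};\alpha)$ is nondecreasing, and integrating against $d\lambda(\alpha)$ via \eqref{Bern} shows $\mathcal{E}_\kappa$ restricted to $\Omega_{eb}$ is minimized at $u=0$, i.e. $z=i$.

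Putting the two arcs together, $\min_{z\in\Omega_e}\mathcal{E}_\kappa(z)=\mathcal{E}_\kappa(i)$; Theorem \ref{Th1} upgrades this to $\min_{z\in\mathbb{H}}\mathcal{E}_\kappa(z)=\mathcal{E}_\kappa(i)$, and $\mathcal{G}_2$-invariance (Lemma \ref{LG1}) together with the strict versions of the above monotonicities (which follow from the strict parts of Montgomery's Lemma \ref{ML} and the maximality defining $\sigma_M$, so that the $y$-derivative is strictly positive for $y>1$) gives that the minimizer is $i$ and unique up to $\mathcal{G}_2$. For the last assertion, Lemma \ref{Lemma43} gives \eqref{sigma} at $\sigma=2$, hence $\sigma_M\ge2$; and since $\sigma\mapsto\tfrac1{1+\sigma}$ is decreasing and $\sigma\mapsto\tfrac{\sigma}{1+\sigma}=1-\tfrac1{1+\sigma}$ is increasing, $[\tfrac1{1+\sigma_M},\tfrac{\sigma_M}{1+\sigma_M}]\supseteq[\tfrac13,\tfrac23]$.

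I expect the main obstacle to be the technical estimate Lemma \ref{Lemma43}, i.e. proving $\mathcal{L}(\mathcal{W}_1+2\mathcal{W}_0)\ge0$ on precisely the interval $[1,\sqrt3]$ where $\mathcal{L}\mathcal{W}_0$ itself changes sign; this is the one genuinely delicate ingredient, and the paper defers it. A secondary point is upgrading the ``$\ge0$'' monotonicities to strict inequalities at the two endpoints $\kappa=\tfrac1{1+\sigma_M}$ and $\kappa=\tfrac{\sigma_M}{1+\sigma_M}$ (where $\sigma$ equals $\sigma_M$) so as to secure uniqueness there; everything else is the routine bookkeeping of tracking which of the two dual bounds $\tfrac{1-\kappa}{\kappa}\le\sigma_M$, $\tfrac{\kappa}{1-\kappa}\le\sigma_M$ applies on each arc.
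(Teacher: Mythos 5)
Your proposal is correct and follows essentially the same route as the paper: reduce to $\Omega_e$ by Theorem \ref{Th1}, use the imaginary-axis monotonicity (Theorem \ref{4th0}, i.e.\ Lemma \ref{4lemma11} extended to $\sigma\in[0,\sigma_M]$ via Montgomery's Lemma \ref{ML} and the $\mathcal{L}$-operator argument) for $\Omega_{ea}$ with $\sigma=\tfrac{1-\kappa}{\kappa}$, transfer to $\Omega_{eb}$ through the duality of Lemma \ref{l-circle-im} with parameter $1-\kappa$ (your explicit tangential-derivative formula is just the computation the paper leaves implicit), and conclude with $\sigma_M\ge2$ from Lemma \ref{Lemma43}.
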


\begin{proof}
Note that
\begin{equation}\label{GJJ}\aligned
&\mathcal{E}_{1}(z;\alpha)+\sigma \mathcal{E}_{0}(z;\alpha)\\
=&(1+\sigma)\Big(
\frac{1}{1+\sigma}\mathcal{E}_{1}(z;\alpha)+\frac{\sigma}{1+\sigma}\mathcal{E}_{0}(z;\alpha)
\Big)\\
=&(1+\sigma)\mathcal{E}_{\frac{1}{1+\sigma}}(z).
\endaligned\end{equation}

 By minimum principle in Theorem \ref{Th1}, one has
\begin{equation}\label{T1C}
\min_{z\in \mathbb{H}}\mathcal{E}_\kappa(z)=\min_{z\in\Omega_e}\mathcal{E}_\kappa(z),
\end{equation}
the minimizer may happen on $\mathbb{H}\backslash\Omega_e$, while it must be modular to some minimizer on $\Omega_e$
via the group $\mathcal{G}_2$.
By Theorem \ref{4th0} and \eqref{GJJ}, for $\kappa\in[\frac{1}{3},1]$ (or more generally $\kappa\in[\frac{1}{1+\sigma_M},1]$),
\begin{equation}\label{T1A}
Minimizer_{z\in\Omega_{ea}}\mathcal{E}_\kappa(z)=i.
\end{equation}
By Lemma \ref{l-circle-im} and \eqref{T1A}, for $\kappa\in[0,\frac{2}{3}]$ (or more generally $\kappa\in[0, \frac{\sigma_M}{1+\sigma_M}]$),
\begin{equation}\label{T1B}
Minimizer_{z\in\Omega_{eb}}\mathcal{E}_\kappa(z)=i.
\end{equation}
Therefore, it follows by \eqref{T1C}, \eqref{T1A} and \eqref{T1B}, for $\kappa\in[\frac{1}{1+\sigma_M},\frac{\sigma_M}{1+\sigma_M}]$,
\begin{equation}\label{T1AA}
Minimizer_{z\in\mathbb{H}}\mathcal{E}_\kappa(z)=i.
\end{equation}
The proof is thus complete.

\end{proof}

Next, we turn to the proof of Theorem \ref{Th3}.
\begin{proof}
By definition
\begin{equation}\nonumber
\mathcal{W}_\kappa(z;\alpha)=\kappa\mathcal{W}_1(z;\alpha)+(1-\kappa)\mathcal{W}_0(z;\alpha).
\end{equation}
To comparison, by deformation one has, for any $\kappa, \kappa_0(\neq0)$
\begin{equation}\label{TFFF}
\mathcal{W}_\kappa(z;\alpha)=\frac{\kappa}{\kappa_0}\mathcal{W}_{\kappa_0}(z;\alpha)+\frac{\kappa_0-\kappa}{\kappa_0}\mathcal{W}_0(z;\alpha).
\end{equation}
Define the least and largest number such that
\begin{equation}\nonumber
Minimizer_{z\in\mathbb{H}}\mathcal{E}_\kappa(z)=i
\end{equation}
is $\kappa_1$ and $\kappa_2$ respectively. It follows by duality Lemma \ref{l-dual} that
$$
\kappa_1+\kappa_2=1.
$$
Theorem \ref{Th41} asserts that
$$
\kappa_1\leq\frac{1}{3},\; \kappa_2\geq\frac{2}{3}.
$$
We also define that
$$
\kappa_1':=\frac{1}{1+\sigma_M},\; \kappa_2':=\frac{\sigma_M}{1+\sigma_M}.
$$
It follows by the definitions that
$$
\kappa_1'\geq \kappa_1,\; \kappa_2'\leq \kappa_2.
$$
Consider $\kappa\in[0,\kappa_1']$. By the definition of $\sigma_M$, one has
\begin{equation}\label{H1}
Minimizer_{z\in\Omega_{eb}}\mathcal{W}_{\kappa_1'}(z;\alpha)=i,
\end{equation}
and the minimizer is unique by the monotonicity induced in Lemma \ref{l-circle-im}, Lemma \ref{l-dual} and Lemma \ref{ML}.
On the other hand, via Lemma \ref{ML} and \ref{l-circle-im}, one has the functional $\mathcal{W}_{0}(z)$ is monotonically increasing on the circle $\Omega_{eb}$, i.e,
\begin{equation}\label{H2}
Minimizer_{z\in\Omega_{eb}}\mathcal{W}_{0}(z;\alpha)=i,
\end{equation}
and the minimizer is uniquely achieved.

From the comparison formula \eqref{TFFF}, we have
\begin{equation}\label{TFFFa}
\mathcal{W}_\kappa(z;\alpha)=\frac{\kappa}{\kappa_1'}\mathcal{W}_{\kappa_1'}(z;\alpha)+\frac{\kappa_1'-\kappa}{\kappa_1'}\mathcal{W}_0(z;\alpha).
\end{equation}
Combining \eqref{H1} and \eqref{H2}, one has, for $\kappa\in[0,\kappa_1']$,
\begin{equation}\label{H3}
Minimizer_{z\in\Omega_{eb}}\mathcal{W}_\kappa(z;\alpha)=i.
\end{equation}
The minimum principle in Theorem \ref{Th1} gives that
\begin{equation}\label{H4}
\min_{z\in\mathbb{H}}\mathcal{W}_\kappa(z;\alpha)=\min_{z\in\Omega_{e}}\mathcal{W}_\kappa(z;\alpha)
\end{equation}
Therefore, for $\kappa\in[0, \kappa_1']$, the location identity \eqref{H3} and minimum principle \eqref{H4}(recall that $\Omega_{e}=\Omega_{ea}\cup\Omega_{eb}$) conclude that
\begin{equation}\label{H5}
\min_{z\in\mathbb{H}}\mathcal{W}_\kappa(z;\alpha)=\min_{z\in\Omega_{ea}}\mathcal{W}_\kappa(z;\alpha).
\end{equation}
The minimizer is unique up to the group $\mathcal{G}_2$.

For $\kappa\in[\kappa_1', \kappa_2']$, by Theorem \ref{Th41},
\begin{equation}\label{H6}
Minimizer_{z\in\mathbb{H}}\mathcal{W}_\kappa(z;\alpha)=i,
\end{equation}
which is unique up the group $\mathcal{G}_2$.
For $\kappa\in[\kappa_2',1]$, by the duality Lemma \ref{l-dual} and \eqref{H5}, one has
\begin{equation}\nonumber
\min_{z\in\mathbb{H}}\mathcal{W}_\kappa(z;\alpha)=\min_{z\in\Omega_{eb}}\mathcal{W}_\kappa(z;\alpha).
\end{equation}
Therefore, the proof of Theorem \ref{Th3} is complete.

\end{proof}

Finally, by Lemma \ref{Lemma43}, one has $\sigma_M\geq2$, hence $[\frac{1}{1+\sigma_M},\frac{\sigma_M}{1+\sigma_M}]\supseteq[\frac{1}{3},\frac{2}{3}]$. Theorem \ref{Th2} then  follows easily from Theorem \ref{Th41}.

\vskip0.3in
It remains to prove Lemma \ref{Lemma43}. We postpone its proof to the end of this section. Before this, we introduce some auxiliary functions and lemmas to the proof.
 From Lemma \ref{lemmaD} and (\ref{Iab}), we note that each term in $I_a(y), J_a(y)$ is non-negative. To control $I_b(y), J_b(y)$, we introduce the auxiliary functions as follows
\begin{equation}\aligned
\mu_1(y,k):&=\sum_{n=k}^\infty  e^{-\pi(n^2-1)y}\\
\mu_2(y,k):&=\sum_{n=k}^\infty n^2 e^{-\pi(n^2-1)y}\\
\nu_1(y,k):&=\sum_{n=k}^\infty e^{-2\pi n(n+1)y}\\
\nu_2(y,k):&=\sum_{n=k}^\infty (2n+1)^2 e^{-2\pi n(n+1)y}
 \endaligned\end{equation}
 for $k\geq2$, these terms are small since $k\geq2$, and they are used to express the error terms.

 Within these auxiliary functions, from Lemma \ref{lemmaD}, one can have the lower bound estimates

 \begin{lemma}\label{Lemmatype1}  We obtain the first lower bound estimate
\begin{equation}\aligned\label{Lforma}
\mathcal{L}\Big(\sum_{n,m}e^{-\pi \alpha(n^2y+\frac{m^2}{y})}\Big)
\geq\sum_{n=-1}^1\sum_{m=-1}^1\pi^2\alpha^2(\frac{m^2}{y^2}-n^2)^2e^{-\pi \alpha(n^2y+\frac{m^2}{y})}\\
-\frac{4\pi \alpha}{y}e^{-\pi \alpha y}
\Big(1+2e^{-\pi \alpha\frac{1}{y}}+Error_1(\alpha,y)\Big),
 \endaligned\end{equation}
 where

 \begin{equation}\label{eee1}
 Error_1(\alpha,y):=2\mu_2(\alpha y,2)e^{-\pi\alpha\frac{1}{y}}+2\mu_2( \alpha y,2)\mu_1( \alpha \frac{1}{y},2),
 \end{equation}
and  the second lower bound estimate
  \begin{equation}\aligned\label{Lformb}
&\mathcal{L}\Big(\sum_{n,m}e^{-\frac{\pi \alpha}{2}\big((2n+1)^2y+\frac{(2m+1)^2}{y}\big)}\Big)\\
&\geq\sum_{n=-1}^1\sum_{m=-1}^1\frac{\pi^2\alpha^2}{4}\big(\frac{(2m+1)^2}{y^2}-(2n+1)^2\big)^2e^{-\frac{\pi \alpha}{2}\big((2n+1)^2y+\frac{(2m+1)^2}{y}\big)}\\
&-\frac{4\pi \alpha}{y}e^{-\frac{\pi \alpha}{2}(y+\frac{1}{y})}(1+Error_2(\alpha,y)),
 \endaligned\end{equation}
 where
 \begin{equation}\label{eee2}
 Error_2(\alpha,y):=\nu_1(\alpha,1)+\nu_2(\alpha,1)+\nu_1(\alpha,1)\nu_2(\alpha,1).
 \end{equation}

 \end{lemma}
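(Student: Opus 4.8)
The plan is to read off both estimates directly from the two exact identities furnished by Lemma~\ref{lemmaD}, namely $\mathcal{L}\big(\sum_{n,m}e^{-\pi\alpha(n^2y+m^2/y)}\big)=I_a(y)-I_b(y)$ and $\mathcal{L}\big(\sum_{n,m}e^{-\frac{\pi\alpha}{2}((2n+1)^2y+(2m+1)^2/y)}\big)=J_a(y)-J_b(y)$, with $I_a,I_b,J_a,J_b$ as in \eqref{Iab}. On the positive side the task is immediate: by \eqref{Iab} every summand of $I_a(y)$ and of $J_a(y)$ is non-negative, so one simply keeps the nine terms with $(n,m)\in\{-1,0,1\}^2$ and discards the rest (retaining, as explained below, a few off-block summands in reserve); this is precisely the leading block written on the right of \eqref{Lforma} and of \eqref{Lformb}. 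All the work is therefore in the upper bounds for $I_b$ and $J_b$.

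The key structural fact is that both sums factor into products of one-dimensional theta-type series: $I_b(y)=\frac{2\pi\alpha}{y}\big(\sum_{n}n^2e^{-\pi\alpha n^2y}\big)\big(\sum_{m}e^{-\pi\alpha m^2/y}\big)$ and $J_b(y)=\frac{\pi\alpha}{y}\big(\sum_{n}(2n+1)^2e^{-\frac{\pi\alpha}{2}(2n+1)^2y}\big)\big(\sum_{m}e^{-\frac{\pi\alpha}{2}(2m+1)^2/y}\big)$. In each factor I would peel off the dominant terms and bound the remaining tail by the appropriate auxiliary function: for $I_b$ the terms $n=\pm1$ contribute $2e^{-\pi\alpha y}$ and $\sum_{|n|\ge2}$ is carried, after extracting $e^{-\pi\alpha y}$, by $\mu_2(\alpha y,2)$, while $m=0,\pm1$ contribute $1+2e^{-\pi\alpha/y}$ and $\sum_{|m|\ge2}$ is carried, after extracting $e^{-\pi\alpha/y}$, by $\mu_1(\alpha/y,2)$; for $J_b$ the smallest odd index $\pm1$ in each factor produces, together with the prefactor, the main term $\frac{4\pi\alpha}{y}e^{-\frac{\pi\alpha}{2}(y+1/y)}$, and the tails, after the reindexing $(2j+1)^2-1=4j(j+1)$, become $\nu_2$ and $\nu_1$. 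Multiplying the factored expressions out, the product of the leading pieces yields the explicit main term of \eqref{Lforma} (resp.\ \eqref{Lformb}), and every cross term, i.e.\ a product of a leading term with a tail or of two tails, should collapse, using $e^{-\pi\alpha/y}\le1$ and $y\ge1$ to trade $\alpha/y$ for $\alpha$ or $\alpha y$ where convenient, into the two monomials constituting $Error_1$ in \eqref{eee1} (resp.\ $Error_2$ in \eqref{eee2}).

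The step I expect to be the main obstacle is exactly this collapsing: verifying that \emph{all} the secondary pieces of $I_b$ (resp.\ $J_b$) are genuinely dominated by the two written error monomials. Some of these pieces are not pointwise smaller than those monomials, and must instead be charged against the off-block, still-positive summands of $I_a$ (resp.\ $J_a$) that were not actually discarded, typically the $(n,m)=(\pm2,0)$, $(0,\pm2)$ and $(\pm1,\pm2)$ type terms. The individual comparisons are elementary, of the shape $2\pi^2\alpha^2n^4\ge\frac{4\pi\alpha}{y}n^2$ (equivalently $\pi\alpha n^2y\ge2$, immediate for $\alpha\ge1$, $y\ge1$, $|n|\ge2$), together with the exponential-versus-polynomial gap between $\pi^2\alpha^2/y^4$ and $e^{-\pi\alpha y}$; the finicky part is keeping this allocation consistent across all the cross terms simultaneously. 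Once the accounting is organised, \eqref{Lforma} and \eqref{Lformb} follow by routine expansion and term-matching.
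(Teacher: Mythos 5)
Your proposal follows essentially the same route as the paper, which offers no detail beyond asserting that the lemma follows from Lemma \ref{lemmaD} together with the auxiliary functions: keep the nine non-negative summands of $I_a$, $J_a$ with $n,m\in\{-1,0,1\}$, factor $I_b$ and $J_b$ into one-dimensional sums, peel off the leading terms and express the tails through $\mu_1,\mu_2,\nu_1,\nu_2$. The obstacle you flag is genuine: expanding $I_b=\frac{4\pi\alpha}{y}e^{-\pi\alpha y}\bigl(1+\mu_2(\alpha y,2)\bigr)\bigl(1+2e^{-\pi\alpha/y}\bigl(1+\mu_1(\alpha/y,2)\bigr)\bigr)$ leaves the pieces $\mu_2(\alpha y,2)$ and $2e^{-\pi\alpha/y}\mu_1(\alpha/y,2)$ uncovered by $Error_1$ as written in \eqref{eee1}, and in the second estimate the $1/y$-factor naturally produces $\nu_1(\alpha/y,1)$, which for $y>1$ exceeds the $\nu_1(\alpha,1)$ appearing in \eqref{eee2}, so the trade of $\alpha/y$ for $\alpha$ goes the wrong way there. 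Your remedy of charging such leftovers to discarded off-block summands does work, provided the targets are chosen correctly: the $\mu_2$ piece is absorbed termwise by the $(\pm n,0)$, $n\ge2$ summands via your inequality $2\pi^2\alpha^2 n^4\ge\frac{4\pi\alpha}{y}n^2$, while the $\mu_1$ piece should be charged to the $(0,\pm m)$, $m\ge2$ summands rather than the $(\pm1,\pm m)$ ones, since $\pi\alpha y\bigl(\tfrac{m^2}{y^2}-1\bigr)^2$ can drop below $2$ (e.g.\ $\alpha=1$, $m=2$, $y$ near $\sqrt3$); alternatively one simply enlarges $Error_1$, $Error_2$ by these exponentially small terms, which is harmless for their use in the proof of Lemma \ref{Lemma43}. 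With that bookkeeping made explicit, your argument establishes \eqref{Lforma} and \eqref{Lformb}.
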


Before going to the precise estimates, we use the main order analysis to show the sketch of the proof.
Notice that here
 \begin{equation}\nonumber
y\in[1,\sqrt3].
 \end{equation}
Here $y$ is variable however it is well controlled by the lower and upper bounds.

Notice that in the lower bound estimates in \eqref{Lforma} and \eqref{Lformb}, the main order terms only have three
possibilities as follows
 \begin{equation}\aligned\nonumber
e^{-\pi\alpha y}, e^{-\pi\alpha \frac{1}{y}},\; e^{-\pi\alpha \frac{1}{2}(y+\frac{1}{y})}.
 \endaligned\end{equation}

To illustrate the main idea, we only capture the main order terms in  Lemma \ref{Lemmatype1}.

By Lemma \ref{Lemmatype1}, the leading order terms of $\mathcal{L}\Big(\sum_{n,m}e^{-\pi \alpha(n^2y+\frac{m^2}{y})}\Big)$ are given by
 \begin{equation}\aligned\label{VVV1}
&\mathcal{L}_m\Big(\sum_{n,m}e^{-\pi \alpha(n^2y+\frac{m^2}{y})}\Big)\\
=
&
2\pi^2\alpha^2\frac{1}{y^4}e^{-\pi\alpha \frac{1}{y}}+\big(2\pi^2\alpha^2-\frac{4\pi\alpha}{y}\big)e^{-\pi\alpha y}
 \endaligned\end{equation}
and the leading order terms of $\mathcal{L}\Big(\sum_{n,m}e^{-\pi \alpha(n^2y+\frac{m^2}{y})}\Big)$ are given by

\begin{equation}\aligned\label{VVV2}
&\mathcal{L}_m\Big(\sum_{n,m}e^{-\frac{\pi \alpha}{2}\big((2n+1)^2y+\frac{(2m+1)^2}{y}\big)}\Big)\\
&=
\pi^2\alpha^2(\frac{1}{y^2}-1)^2 e^{-\pi\alpha\frac{1}{2}(y+\frac{1}{y})}-\frac{4\pi\alpha}{y}e^{-\pi\alpha\frac{1}{2}(y+\frac{1}{y})}.
 \endaligned\end{equation}
The term $\pi^2\alpha^2(\frac{1}{y^2}-1)^2 e^{-\pi\alpha\frac{1}{2}(y+\frac{1}{y})}$ will be dropped in estimates since it is nonnegative and
it is zero as $y=1$.
Now by \eqref{VVV1} and \eqref{VVV2},
\begin{equation}\aligned\label{wwwa}
&\mathcal{L}_m\Big(\sum_{n,m}e^{-\frac{\pi \alpha}{2}\big((2n+1)^2y+\frac{(2m+1)^2}{y}\big)}\Big)+\sigma
\mathcal{L}_m\Big(\sum_{n,m}e^{-\frac{\pi \alpha}{2}\big((2n+1)^2y+\frac{(2m+1)^2}{y}\big)}\Big)\\
\geq&
2\pi^2\alpha^2\frac{1}{y^4}e^{-\pi\alpha \frac{1}{y}}+\big(2\pi^2\alpha^2-\frac{4\pi\alpha}{y}\big)e^{-\pi\alpha y}
-\sigma\frac{4\pi\alpha}{y}e^{-\pi\alpha\frac{1}{2}(y+\frac{1}{y})}.
 \endaligned\end{equation}
 To simplify the notations, let
\begin{equation}\aligned\label{Palpha}
\mathcal{P}(\alpha,y;\sigma):=
2\pi^2\alpha^2\frac{1}{y^4}e^{-\pi\alpha \frac{1}{y}}+\big(2\pi^2\alpha^2-\frac{4\pi\alpha}{y}\big)e^{-\pi\alpha y}
-\sigma\frac{4\pi\alpha}{y}e^{-\pi\alpha\frac{1}{2}(y+\frac{1}{y})}.
 \endaligned\end{equation}
 Thus
 \begin{equation}\aligned\label{wwwaa}
\mathcal{L}_m\Big(\sum_{n,m}e^{-\frac{\pi \alpha}{2}\big((2n+1)^2y+\frac{(2m+1)^2}{y}\big)}\Big)+\sigma
\mathcal{L}_m\Big(\sum_{n,m}e^{-\frac{\pi \alpha}{2}\big((2n+1)^2y+\frac{(2m+1)^2}{y}\big)}\Big)
\geq\mathcal{P}(\alpha,y;\sigma).
 \endaligned\end{equation}
 We shall prove that $\mathcal{P}(\alpha,y;\sigma)>0$ for some positive $\sigma$.

 Since $y\geq1,\alpha\geq1$, one has $\frac{4\pi\alpha}{y}\leq\frac{2}{\pi}2\pi^2\alpha^2$, thus

 \begin{equation}\aligned\nonumber
2\pi^2\alpha^2-\frac{4\pi\alpha}{y}\geq(1-\frac{2}{\pi})2\pi^2\alpha^2.
 \endaligned\end{equation}
 Then
 \begin{equation}\aligned\label{wwwb}
\mathcal{P}(\alpha,y;\sigma)
\geq
2\pi^2\alpha^2\frac{1}{y^4}e^{-\pi\alpha \frac{1}{y}}+(1-\frac{2}{\pi})2\pi^2\alpha^2e^{-\pi\alpha y}
-\sigma\frac{4\pi\alpha}{y}e^{-\pi\alpha\frac{1}{2}(y+\frac{1}{y})}.
 \endaligned\end{equation}
 By the basic mean value inequality, one has, for any positive $\mathcal{X}, \mathcal{Y}$, there has
  \begin{equation}\aligned\nonumber
\mathcal{X}e^{-\pi\alpha y} +\mathcal{Y} e^{-\pi\alpha \frac{1}{y}}\geq 2\sqrt{\mathcal{X}\mathcal{Y}}e^{-\pi\alpha \frac{1}{2}(y+\frac{1}{y})},
 \endaligned\end{equation}
 with equality holds only if $y=1$ and $\mathcal{X}=\mathcal{Y}$.
 It follows that, at least for
   \begin{equation}\aligned\label{sigma000}
\sigma=\sqrt{1-\frac{2}{\pi}}\pi,
 \endaligned\end{equation}
 there holds that
  \begin{equation}\aligned\nonumber
2\pi^2\alpha^2\frac{1}{y^4}e^{-\pi\alpha \frac{1}{y}}+(1-\frac{2}{\pi})2\pi^2\alpha^2e^{-\pi\alpha y}
>\sigma\frac{4\pi\alpha}{y}e^{-\pi\alpha\frac{1}{2}(y+\frac{1}{y})},
 \endaligned\end{equation}
 which implies that
   \begin{equation}\aligned\nonumber
\mathcal{P}(\alpha,y;\sigma)>0.
 \endaligned\end{equation}

 In view of \eqref{wwwb}, for the value of $\sigma$ in \eqref{sigma000}, one has
\begin{equation}\aligned\label{wwwbb}
\mathcal{L}_m\Big(\sum_{n,m}e^{-\frac{\pi \alpha}{2}\big((2n+1)^2y+\frac{(2m+1)^2}{y}\big)}\Big)+\sigma
\mathcal{L}_m\Big(\sum_{n,m}e^{-\frac{\pi \alpha}{2}\big((2n+1)^2y+\frac{(2m+1)^2}{y}\big)}\Big)
\geq0.
 \endaligned\end{equation}
By the main order analysis, we give the heuristic proof of Lemma \ref{Lemma43}.
In the next, we give full proof of Lemma \ref{Lemma43} and provide the scheme of finding the optimal value of $\sigma$.

We are now in a position to prove Lemma \ref{Lemma43}.
\vskip0.3in
{\bf Proof of Lemma \ref{Lemma43}.}

\begin{proof} We only need to consider the cases $\alpha\geq1$ and $y\geq1$ by Lemma \ref{l-dual} and identity \eqref{JJJ1} respectively.

By Lemma \ref{Lemmatype1}, one has

\begin{equation}\aligned\label{wwwa}
&\mathcal{L}\Big(\sum_{n,m}e^{-\frac{\pi \alpha}{2}\big((2n+1)^2y+\frac{(2m+1)^2}{y}\big)}\Big)+\sigma
\mathcal{L}\Big(\sum_{n,m}e^{-\frac{\pi \alpha}{2}\big((2n+1)^2y+\frac{(2m+1)^2}{y}\big)}\Big)\\
\geq&
\sum_{n=-1}^1\sum_{m=-1}^1\pi^2\alpha^2(\frac{m^2}{y^2}-n^2)^2e^{-\pi \alpha(n^2y+\frac{m^2}{y})}
-\frac{4\pi \alpha}{y}e^{-\pi \alpha y}
\Big(1+2e^{-\pi \alpha\frac{1}{y}}\Big)\\
+&\sigma
\Big(
\sum_{n=-1}^1\sum_{m=-1}^1\frac{\pi^2\alpha^2}{4}\big(\frac{(2m+1)^2}{y^2}-(2n+1)^2\big)^2e^{-\frac{\pi \alpha}{2}\big((2n+1)^2y+\frac{(2m+1)^2}{y}\big)}\\
&-\frac{4\pi \alpha}{y}e^{-\frac{\pi \alpha}{2}(y+\frac{1}{y})})
\Big)\\
-& \mathcal{R}_1(\alpha,y).
\endaligned\end{equation}
Here
\begin{equation}\aligned\nonumber
\mathcal{R}_1(\alpha,y)=\frac{4\pi \alpha}{y}e^{-\pi \alpha y}
Error_1(\alpha,y)+
\frac{4\pi \alpha}{y}e^{-\frac{\pi \alpha}{2}(y+\frac{1}{y})}Error_2(\alpha,y),
 \endaligned\end{equation}
where
$Error_1(\alpha,y)$ and $Error_2(\alpha,y)$ are defined in \eqref{eee1} and \eqref{eee2} respectively.
Now we define
\begin{equation}\aligned\label{wwwa}
\mathcal{A}(\alpha,y;\sigma):=&
\sum_{n=-1}^1\sum_{m=-1}^1\pi^2\alpha^2(\frac{m^2}{y^2}-n^2)^2e^{-\pi \alpha(n^2y+\frac{m^2}{y})}
-\frac{4\pi \alpha}{y}e^{-\pi \alpha y}
\Big(1+2e^{-\pi \alpha\frac{1}{y}}\Big)\\
+&\sigma
\Big(
\sum_{n=-1}^1\sum_{m=-1}^1\frac{\pi^2\alpha^2}{4}\big(\frac{(2m+1)^2}{y^2}-(2n+1)^2\big)^2e^{-\frac{\pi \alpha}{2}\big((2n+1)^2y+\frac{(2m+1)^2}{y}\big)}\\
&-\frac{4\pi \alpha}{y}e^{-\frac{\pi \alpha}{2}(y+\frac{1}{y})})
\Big).
\endaligned\end{equation}
Then
\begin{equation}\aligned\label{wwwa}
&\mathcal{L}\Big(\sum_{n,m}e^{-\frac{\pi \alpha}{2}\big((2n+1)^2y+\frac{(2m+1)^2}{y}\big)}\Big)+\sigma
\mathcal{L}\Big(\sum_{n,m}e^{-\frac{\pi \alpha}{2}\big((2n+1)^2y+\frac{(2m+1)^2}{y}\big)}\Big)\\
\geq&
\mathcal{A}(\alpha,y;\sigma)-\mathcal{R}_1(\alpha,y).
\endaligned\end{equation}

The remainder term $\mathcal{R}_1(\alpha,y)$ is very small comparing the major term $\mathcal{A}(\alpha,y;\sigma)$ for $\sigma\geq0$.
We note that the estimate in $\mathcal{A}(\alpha,y;\sigma)$ is much refined than that in the heuristic estimate. In fact, after dropping the nonnegative terms in $\mathcal{A}(\alpha,y;\sigma)$, one has
\begin{equation}\aligned\label{hhhddd}
\mathcal{A}(\alpha,y;\sigma)
\geq&
2\pi^2\alpha^2\frac{1}{y^4}e^{-\pi\alpha \frac{1}{y}}+\big(2\pi^2\alpha^2-\frac{4\pi\alpha}{y}(1+2e^{-\pi \alpha\frac{1}{y}})\big)e^{-\pi\alpha y}
-\sigma\frac{4\pi\alpha}{y}e^{-\pi\alpha\frac{1}{2}(y+\frac{1}{y})}\\
=&\mathcal{P}(\alpha,y;\sigma)-\frac{8\pi\alpha}{y}e^{-\pi\alpha(y+\frac{1}{y})}.
 \endaligned\end{equation}
 The lower bound is quite similar to the one in $\mathcal{P}(\alpha,y;\sigma)$(\eqref{wwwb}).
Same as the estimates in \eqref{wwwb}, similar to calculation of the value of $\sigma$ in \eqref{sigma000}, i.e., for
 \begin{equation}\aligned\nonumber
\sigma=\sqrt{1-\frac{2}{\pi}}\pi-2e^{-\pi},
 \endaligned\end{equation}
one has
\begin{equation}\aligned\label{hhheee}
2\pi^2\alpha^2\frac{1}{y^4}e^{-\pi\alpha \frac{1}{y}}+\big(2\pi^2\alpha^2-\frac{4\pi\alpha}{y}(1+2e^{-\pi \alpha\frac{1}{y}})\big)e^{-\pi\alpha y}
-\sigma\frac{4\pi\alpha}{y}e^{-\pi\alpha\frac{1}{2}(y+\frac{1}{y})}>0
 \endaligned\end{equation}
for $\alpha\geq1$ and $y\geq1$. Hence by \eqref{hhhddd} and \eqref{hhheee},
\begin{equation}\aligned\nonumber
\mathcal{A}(\alpha,y;\sqrt{1-\frac{2}{\pi}}\pi-2e^{-\pi})>0\;\;\hbox{for}\;\; \alpha\geq1, y\in[1,\sqrt3].
\endaligned\end{equation}

The value of $\sigma$ such that $\mathcal{A}(\alpha,y;\sigma)$ is positive on the strip $\alpha\geq1$ and $y\in[1,\sqrt3]$ can be improved by the auxiliary function $\mathcal{A}(\alpha,y;\sigma)$.

In the next, we show that
\begin{equation}\aligned\label{Akey}
\mathcal{A}(\alpha,y;2)>0\;\;\hbox{for}\;\; \alpha\geq1, y\in[1,\sqrt3].
\endaligned\end{equation}
To use the monotonicity, we set
\begin{equation}\aligned\label{AB}
\mathcal{A}(\alpha,y;\sigma)
=e^{-\pi\alpha y} y^{-4}
\mathcal{B}(\alpha,y;\sigma),
\endaligned\end{equation}
where
\begin{equation}\aligned\nonumber
\mathcal{B}(\alpha,y;\sigma)
=&2\pi^2\alpha^2 y^4+2\pi^2\alpha^2 e^{\pi\alpha(y-\frac{1}{y})}
+4\pi^2\alpha^2(y^2-1)^2e^{-\pi\alpha\frac{1}{y}}\\
&-4\pi\alpha y^3-8\pi\alpha y^3 e^{-\pi\alpha\frac{1}{y}}-
\sigma\Big(
\pi^2\alpha^2(y^2-1)^2e^{\frac{\pi\alpha}{2}(y-\frac{1}{y})}-4\pi\alpha y^3 e^{\frac{\pi\alpha}{2}(y-\frac{1}{y})}
\Big).
\endaligned\end{equation}

By an elementary but complicated argument, one has
\begin{equation}\aligned\nonumber
\frac{\partial}{\partial \alpha}\mathcal{B}(\alpha,y;\sigma)>0, \frac{\partial}{\partial y}\mathcal{B}(\alpha,y;\sigma)>0\;\;
\hbox{for}\;\; y\in[1,\sqrt3], \alpha\geq1, \sigma\in[0,2].
\endaligned\end{equation}
It follows that
\begin{equation}\aligned\nonumber
\min_{\alpha\geq1,y\in[1,\sqrt3]}\mathcal{B}(\alpha,y;2)=\mathcal{B}(1,1;2).
\endaligned\end{equation}
Now the numerical result shows that
\begin{equation}\aligned\nonumber
\mathcal{B}(1,1;2)>0.
\endaligned\end{equation}
Therefore by \eqref{AB}, \eqref{Akey} is proved.

Since the remainder term $\mathcal{R}_1(\alpha,y)$ is so small comparing the major term $\mathcal{A}(\alpha,y;\sigma)$,  one has
\begin{equation}\aligned\nonumber
\mathcal{A}(\alpha,y;2)-\mathcal{R}_1(\alpha,y)>0\;\;\hbox{for}\;\; \alpha\geq1, y\in[1,\sqrt3].
\endaligned\end{equation}
By \eqref{wwwa}, the proof is complete.

\end{proof}

\medskip

\noindent
{\bf Remark on the Proof of Lemma \ref{Lemma43}.}

The bound $\sigma=2$ is not sharp as we can see in the proof. However we can improve this bound by some  auxiliary functions.
By taking more  terms in the identity of Lemma \ref{lemmaD}, one can get more accurate upper bound.

\vskip0.2in

\noindent
{\bf Acknowledgment:}
The research of S. Luo was partially supported by double thousand plan of Jiangxi(jxsq2019101048) and NSFC(No. 12001253).
 The research of J. Wei was partially supported by NSERC of Canada.
The research of W. Zou was partially supported by NSFC(Nos. 11801581,11025106, 11371212, 11271386).
S. Luo is grateful to Prof.H.J. Zhao (Wuhan University)  for his constant support and encouragement.

\section{ Appendix: The proof of Lemma \ref{LemmaT3}}

\setcounter{equation}{0}

 We separate the proof into two parts.
On the one hand, by the Jacobi tri-product formula, we have
\begin{equation}\aligned\label{Product}
\vartheta(X;Y)&=\prod_{n=1}^\infty(1-e^{-2\pi n X})(1+e^{-(2n-1)\pi X}e^{2\pi iY})(1+e^{-(2n-1)\pi X}e^{-2\pi iY})\\
&=\prod_{n=1}^\infty(1-e^{-2\pi n X})(1+e^{-2(2n-1)\pi X}+2e^{-(2n-1)\pi X}\cos(2\pi Y)).
 \endaligned\end{equation}

Taking logarithmic on both sides of \eqref{Product} and differentiating with respect to $Y$, one gets
\begin{equation}\aligned\label{QQQ}
-\frac{\frac{\partial}{\partial Y}\vartheta(X;Y)}{\sin(2\pi Y)}&=4\pi\sum_{n=1}^\infty e^{-(2n-1)\pi X}\frac{\vartheta(X;Y)}{1+e^{-2(2n-1)\pi X}+2e^{-(2n-1)\pi X}\cos(2\pi Y)}\\
&=4\pi\sum_{n=1}^\infty e^{-(2n-1)\pi X}\prod_{m\neq n,m=1}^\infty(1-e^{-2\pi m X})(1+e^{-2(2m-1)\pi X}+2e^{-(2m-1)\pi X}\cos(2\pi Y)).
 \endaligned\end{equation}
One sees from \eqref{QQQ} that the function $-\frac{\frac{\partial}{\partial Y}\vartheta(X;Y)}{\sin(2\pi Y)}$ has a period $1$, is decreasing on $[0,\frac{1}{2}]$
and a even function for $Y$, just like $\cos(2\pi Y)$.

Then
\begin{equation}\aligned\label{QQQ1}
\lim_{Y\rightarrow\frac{1}{2}}-\frac{\frac{\partial}{\partial Y}\vartheta(X;Y)}{\sin(2\pi Y)}&\leq-\frac{\frac{\partial}{\partial Y}\vartheta(X;Y)}{\sin(2\pi Y)}\leq\lim_{Y\rightarrow0}-\frac{\frac{\partial}{\partial Y}\vartheta(X;Y)}{\sin(2\pi Y)}.
\endaligned\end{equation}
 It follows that
\begin{equation}\aligned\label{QQQ2}
\frac{1}{2\pi}\frac{\partial^2}{\partial Y^2}\vartheta(X;Y)\mid_{Y=\frac{1}{2}}\leq-\frac{\frac{\partial}{\partial Y}\vartheta(X;Y)}{\sin(2\pi Y)}\leq-\frac{1}{2\pi}\frac{\partial^2}{\partial Y^2}\vartheta(X;Y)\mid_{Y=0}\\
\endaligned\end{equation}
by L'Hospital rule.

By \eqref{TXY}, one has,
\begin{equation}\aligned\label{QQQ3}
\frac{\partial^2}{\partial Y^2}\vartheta(X;Y)\mid_{Y=0}&=4\pi e^{-\pi X}(1+\sum_{n=2}^\infty n^2 e^{-\pi(n^2-1) X})\\
\frac{1}{2\pi}\frac{\partial^2}{\partial Y^2}\vartheta(X;Y)\mid_{Y=\frac{1}{2}}&=4\pi\sum_{n=1}^\infty (-1)^{n-1}n^2 e^{-n^2\pi X}
\geq4\pi e^{-\pi X}(1-\sum_{n=2}^\infty n^2 e^{-\pi(n^2-1) X}).
\endaligned\end{equation}
Combining \eqref{QQQ1}, \eqref{QQQ2} and \eqref{QQQ3}, one gets the result for lower and upper for $ t>a>0$. For more details see  \cite{LW}.

On the other hand, for t is small and closes to 0, by Poisson Summation Formula, one has
\begin{equation}\aligned\nonumber
\vartheta(X;Y)=X^{-\frac{1}{2}}\sum_{n\in \mathbb{Z}} e^{-\frac{\pi(n-Y)^2}{X}}.
\endaligned\end{equation}

We have
\begin{equation}\aligned\nonumber
\frac{\partial}{\partial Y}\vartheta(t;Y)&=-2\pi t^{-\frac{3}{2}}\sum_{m=-\infty}^\infty e^{-\frac{\pi(m-Y)^2}{t}}(Y-m)\\
&=-2\pi t^{-\frac{3}{2}}(\sum_{m=0}^\infty e^{-\frac{\pi (k+m)^2}{t}}(k+m-e^{-\frac{\pi(2m+1)(1-2k)}{t}}(m+1-k)))
\endaligned
\end{equation}
Notice that for $Y\leq 1, t\leq 2\pi$
\begin{equation}\aligned\nonumber
\frac{d}{dY}\big(
e^{-\frac{\pi(2m+1)(1-2Y)}{t}(m+1-Y)}\big)=\frac{e^{-\frac{\pi(2m+1)(1-2Y)}{t}}}{t}(4m^2\pi+6m\pi+2\pi-4Y m\pi-2Y\pi-t)>0.
\endaligned
\end{equation}
Then by this, one gets the  lower and upper bounds for $ 0<t<k\leq \frac{\pi}{2}$. The proof of Lemma \ref{LemmaT3} is thus complete.

\section{Appendix: the proof of Theorem \ref{Th4} }

\setcounter{equation}{0}

We start with the scaling (by parameter a) of the Gamma function
\begin{equation}\label{Gamma}
\frac{\Gamma(s)}{\pi^s}\frac{1}{a^s}=\int_0^\infty e^{-\pi a t}t^s\frac{dt}{t}
\end{equation}
which can be rewritten in convergent form
\begin{equation}\aligned\nonumber
\frac{\Gamma(s)}{\pi^s}\frac{1}{a^s}=(\frac{1}{s-1}-\frac{1}{s})
+\int_1^\infty(e^{-\pi a t}-1)(t^s+t^{1-s})\frac{dt}{t}.
\endaligned
\end{equation}
It follows that
\begin{equation}\aligned\label{GLW}
\mathbf{G}_R (z;\kappa,s)=(\frac{1}{s-1}-\frac{1}{s})
+\int_1^\infty(\mathcal{W}_\kappa(z;\alpha)-1)(t^s+t^{1-s})\frac{dt}{t}.
\endaligned
\end{equation}
See $\mathbf{G}_R (z;\kappa,s)$ and $\mathcal{W}_\kappa(z;\alpha)$ in \eqref{EisenW} and \eqref{Wzalphadef} respectively.
The identity \eqref{GLW} can be viewed as an analogue of the Bernstein Theorem in Theorem \eqref{Bern}.
Taking derivative with respect to $s$, one has
\begin{equation}\aligned\label{GLWD}
\frac{d^j}{ds^j}\mathbf{G}_R (z;\kappa,s)=\frac{d^j}{ds^j}(\frac{1}{s-1}-\frac{1}{s})
+\int_1^\infty(\mathcal{W}_\kappa(z;\alpha)-1)\frac{d^j}{ds^j}(t^s+t^{1-s})\frac{dt}{t}.
\endaligned
\end{equation}
Notice that
\begin{equation}\aligned\nonumber
\frac{d^j}{ds^j}(t^s+t^{1-s})=(\log t)^j t^s+(-\log t)^j t^{1-s}
\endaligned
\end{equation}
is nonnegative for any $j\geq1, t\geq1, s\geq\frac{1}{2}$.

By the bridge identity \eqref{GLWD}, Theorem \ref{Th4} can be proved by following similar proofs  of Theorem \ref{Th1} and Theorem \ref{Th2}. We omit the details.

\end{document}